\definecolor{mygreen}{RGB}{0, 180, 0}
\definecolor{myred}{RGB}{200, 0, 0}
\definecolor{arcblue}{RGB}{0, 0, 255}
\definecolor{arrowcolor}{RGB}{0, 0, 0}
\definecolor{dark-green}{RGB}{14,150,2}
\newcommand\myshade{85}
\colorlet{mylinkcolor}{violet}
\colorlet{mycitecolor}{red}
\colorlet{myurlcolor}{cyan}
\newtheorem{Thm}{Theorem}[section]
\newtheorem{Lem}[Thm]{Lemma}
\newtheorem{Prop}[Thm]{Proposition}
\newtheorem{Prop-Def}[Thm]{Proposition-Definition}
\newtheorem*{Conj*}{Conjecture}
\theoremstyle{definition}
\newtheorem{Def}[Thm]{Definition}
\newtheorem{Rem}[Thm]{Remark}
\newcommand{\rpoint}{\color{red}{\bullet}}
\definecolor{arcblue}{RGB}{0, 0, 255} 
\newcommand{\bpoint}{\color{arcblue}{\bullet}}
\newcommand{\silt}{\mathbf{silt}\hspace{.05in}} 
\newcommand{\stautilt}{\mbox{\bf s$\tau$-tilt}\hspace{.02in}}
\newcommand{\brick}{\mathbf{brick}\hspace{.02in}}
\newcommand{\fbrick}{\mathbf{fbrick}\hspace{.02in}}
\newcommand{\D}{\mathcal D}
\newcommand{\T}{\mathcal T}
\newcommand{\Z}{{\mathbb Z}}
\newcommand{\N}{{\mathbb N}}
\newcommand{\R}{\mathbf{R}}
\newcommand{\za}{\alpha}
\newcommand{\zb}{\beta}
\newcommand{\zd}{\delta}
\newcommand{\zD}{\Delta}
\newcommand{\zg}{\gamma}
\newcommand{\op}{\oplus}
\newcommand{\ot}{\otimes}
\DeclareMathOperator{\moduleCategory}{{\mathsf{mod}}} 
\renewcommand{\mod}{\moduleCategory} % 重定义 \mod
\renewcommand{\dim}{{\rm dim}}       % 重定义 \dim
\DeclareMathOperator{\thick}{\mathsf{thick}}
\DeclareMathOperator{\per}{\mathsf{per}}
\newcommand{\bb}{\mathrm{b}}
\newcommand{\p}{\mathrm{p}}
\newcommand{\h}{{\mathrm H}}
\newcommand{\Hom}{\operatorname{Hom}\nolimits}
\newcommand{\rad}{\operatorname{rad}\nolimits}
\newcommand{\End}{\operatorname{End}\nolimits}
\newcommand{\RHom}{\mathbf{R}\strut\kern-.2em\operatorname{Hom}\nolimits}
\newcommand{\RshHom}{\mathbf{R}\strut\kern-.2em\mathscr{H}\strut\kern-.3em\operatorname{om}\nolimits}
\newcommand{\shHom}{\mathscr{H}\strut\kern-.3em\operatorname{om}\nolimits}
\newcommand{\shEnd}{\mathscr{E}\strut\kern-.3em\operatorname{nd}\nolimits}
\numberwithin{equation}{section}
\title[$\tau$-tilting finiteness and silting-discreteness of (skew-) gentle algebras]{On the $\tau$-tilting finiteness and silting-discreteness of graded (skew-) gentle algebras}
\begin{document}
\setlength{\baselineskip}{16pt}

\author{Wen Chang}\address{Wen Chang, School of Mathematics and Statistics, Shaanxi Normal University, Xi'an 710062, China}\email{changwen161@163.com}

\author{Haibo Jin}
\address{Haibo Jin, School of Mathematical Sciences,  Key Laboratory of Mathematics and Engineering Applications (Ministry of Education),   Shanghai Key Laboratory of PMMP,
  East China Normal University,
 Shanghai 200241, China}
\email{hbjin@math.ecnu.edu.cn}

\author{Sibylle Schroll}\address{Sibylle Schroll, Insitut f\"ur Mathematik, Universit\"at zu K\"oln, Weyertal 86-90, K\"oln, Germany }\email{schroll@math.uni-koeln.de}

\author{Qi Wang}
\address{Qi Wang, School of Mathematical Sciences, Dalian University of Technology, Dalian, 116024, China}
\email{wang2025@dlut.edu.cn}

\keywords{$\tau$-tilting finiteness, silting-discreteness, (skew-)gentle algebras, marked surfaces}

\begin{abstract}
This paper investigates finiteness conditions for gentle and skew-gentle algebras. First, we prove that a skew-gentle algebra is $\tau$-tilting finite if and only if it is representation-finite, which extends the result for gentle algebras in \cite{P19}. Second, using surface models, we characterize silting-discreteness for the perfect derived categories of graded gentle and skew-gentle algebras. Specifically, for a graded gentle algebra, silting-discreteness is equivalent to its associated surface being of genus zero with non-zero winding numbers for all simple closed curves. We further extend this geometric characterization to graded skew-gentle algebras via orbifold surface models.
\end{abstract}

\maketitle
\tableofcontents
\section{Introduction}

Module categories and derived categories, or more generally abelian categories and triangulated categories, play a fundamental role in many areas of mathematics, including representation theory, algebraic geometry, and algebraic topology. A fruitful approach to understanding these categories is to study special classes of generators, such as tilting, $\tau$-tilting, and silting objects. Tilting objects play a central role in the study of equivalences between these categories \cite{APR79,BGP73,BB80,HR82}. As natural generalizations of tilting objects, $\tau$-tilting modules, introduced for module categories in \cite{AIR14}, and silting complexes, introduced for derived categories in \cite{KV88}, have emerged as central objects of interest in modern representation theory. These objects are important in studying equivalences of categories as well as mutations; see, for example, \cite{Rickard89,Keller94,Ke98}. Moreover, there are deep connections with several other fundamental structures in mathematics, including Bridgeland stability conditions, cluster theory, torsion theory, and $t$-structures; see, for example, \cite{AI12,AIR14,IY,KoY,QW} for details.

A central topic in representation theory is the study of suitable finiteness conditions for module categories and derived categories, in particular, from the perspective of the aforementioned special classes of objects. For instance, representation-finiteness \cite{G72,DR73} reflects the complexity of the module category; $\tau$-tilting finiteness \cite{AIR14,DIJ19} captures the behavior of wide subcategories and torsion classes in module categories; silting-discreteness \cite{Aihara13} governs the structure of silting objects and their mutations in triangulated categories \cite{AI12,AHMW21,AH24,AMY}. These finiteness conditions are closely related to each other and interact substantially \cite{AM17,BPP,P19}.

However, determining whether a given algebra satisfies these finiteness conditions remains a challenging problem in general. In this paper, we investigate these finiteness conditions in module categories and derived categories for two important classes of algebras, namely, gentle algebras introduced in \cite{AH81,AS-tilting-cotilting-equ} and their generalizations, skew-gentle algebras first defined in \cite{GeissdelaPena}. The representation theory of these algebras has been extensively studied, and both their module categories and derived categories admit rich and explicit combinatorial structures. Recent developments have shown that geometric surface models provide an effective framework for describing these structures for gentle algebras; see, for example, \cite{B16, HKK17, OPS18, LP20, BC21, CJS22, JSW23, C25}, as well as for skew-gentle algebras; see, for example, \cite{AB22, LSV22, QZZ22, HZZ23, AP24, CK24, BSW24}. More precisely, indecomposable objects in their module and derived categories can be interpreted as curves on a marked surface, while morphisms correspond to intersections of curves, Auslander--Reiten translations are realized by suitable rotations, and certain categorical reductions can be described by cutting the surface. This geometric perspective has proved to be particularly powerful in studying homological and representation-theoretic properties of (skew-)gentle algebras.

A finite-dimensional algebra is said to be \emph{$\tau$-tilting finite} (see Definition~\ref{def::tau-tilt-finite}) if it admits only finitely many $\tau$-tilting modules up to isomorphism. Our first main result provides a characterization of $\tau$-tilting finiteness for skew-gentle algebras, extending and generalizing the result for gentle algebras established in \cite{P19}.

\begin{theorem}[Theorem \ref{theo::main-result-skew-gentle}]\label{thm::1.1}
Let $A$ be a skew-gentle algebra. Then, $A$ is $\tau$-tilting finite if and only if $A$ is representation-finite, or equivalently, $A$ admits no band module.
\end{theorem}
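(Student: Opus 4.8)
The plan is to split the two biconditionals into three pieces of very different difficulty: the trivial implication, a structural equivalence about indecomposables, and a single substantial implication that I would attack through bricks.

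First I would dispose of \emph{representation-finite $\Rightarrow$ $\tau$-tilting finite}, which holds for any finite-dimensional algebra: a basic $\tau$-tilting module has exactly $|A|$ pairwise non-isomorphic indecomposable summands, so if $\mod A$ has only finitely many indecomposables up to isomorphism there can be only finitely many $\tau$-tilting modules. Next I would record the equivalence \emph{representation-finite $\iff$ no band module} as the structural input. For skew-gentle (clannish) algebras the indecomposable modules are classified into string modules, band modules, and the finitely many extra modules attached to the special data; the band modules of a fixed primitive band $b$ form a $k^{\times}$-indexed family of pairwise non-isomorphic indecomposables, whereas in the absence of bands only finitely many strings survive. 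Thus $A$ is representation-infinite precisely when it admits a band. I would cite this from the classification of indecomposables over skew-gentle algebras, or read it off the orbifold surface model, where a band corresponds to an essential, non-contractible simple closed curve.

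The remaining and only substantial content is the contrapositive of $\tau$-tilting finite $\Rightarrow$ representation-finite, namely: \emph{if $A$ admits a band then $A$ is $\tau$-tilting infinite}. Here I would invoke the theorem of Demonet--Iyama--Jasso \cite{DIJ19} that $A$ is $\tau$-tilting finite if and only if it has only finitely many isomorphism classes of bricks, reducing the task to exhibiting infinitely many bricks. Given a primitive band $b$, I would consider the family of band modules $M(b,\lambda)$ with $\lambda\in k^{\times}$. These are pairwise non-isomorphic, and each is a quasi-simple module sitting at the mouth of a homogeneous tube; such quasi-simple regular modules have endomorphism ring $k$ and are therefore bricks (the model case being the regular modules $R_{\lambda}$ of the Kronecker quiver). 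Since $k$ is algebraically closed, hence infinite, the family $\{M(b,\lambda)\mid\lambda\in k^{\times}\}$ furnishes infinitely many bricks, so $A$ is $\tau$-tilting infinite. Combining the three parts yields the stated equivalences.

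The main obstacle is the brick analysis in this last step, precisely because a skew-gentle algebra is clannish rather than a genuine string algebra: one must check that a band in the skew-gentle sense really produces honest band modules and that the special loops, with their idempotent relations at special vertices, create no extra endomorphisms of $M(b,\lambda)$. I expect this to be settled by arranging the band to interact trivially with the special elements, so that the $\Hom$-computation reduces to the string-algebra case and each $M(b,\lambda)$ stays a brick. An alternative route that circumvents part of this difficulty is to pass to the gentle algebra $A_{gtl}$ obtained from $A$ by splitting the special vertices: if one shows that $\tau$-tilting finiteness, representation-finiteness, and the existence of bands all transfer between $A$ and $A_{gtl}$ (for instance through the $\mathbb{Z}/2$-action relating them when $\operatorname{char}k\neq 2$), then the conclusion follows directly from the gentle case \cite{P19}.
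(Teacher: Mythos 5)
Your decomposition is the right one, and the easy pieces (representation-finite implies $\tau$-tilting finite; representation-finite iff no band, via the classification of indecomposables; reduction of the remaining implication to producing infinitely many bricks via \cite{DIJ19}) all agree with the paper. The genuine gap is at the central step. You claim that for an \emph{arbitrary} band $b$ the modules $M(b,\lambda)$ are bricks because they are quasi-simple at the mouth of a homogeneous tube. That inference is valid for hereditary algebras but fails for string and clannish algebras in general: $\End_A(M(b,\lambda,1))$ is spanned by the identity together with graph maps coming from self-overlaps of the band (equivalently, self-intersections of the corresponding closed curve in the surface model), so a band module need not be a brick. This is precisely why neither Plamondon's proof in the gentle case nor the paper's proof argues with an arbitrary band: the paper first shows (Proposition \ref{prop::minimal-exist}) that a skew-gentle algebra admitting a band admits a \emph{minimal} one, and only for these does it derive $\tau$-tilting infiniteness. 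You do flag an obstacle here, but you locate it in the interaction with the special loops; the problem is already present for ordinary bands containing no special letters.

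Moreover, your uniform ``$\Bbbk^\times$-family of band modules'' does not account for symmetric bands, which are genuinely new in the skew-gentle setting and have a different parametrization of the attached modules. The paper treats the three types of minimal bands by three different reductions: ordinary minimal bands reduce to the gentle quotient and \cite{P19}; minimal asymmetric bands through a special loop reduce to the quotient algebra $\mathbf{S}_1$, where an explicit one-parameter family of bricks is constructed by hand (Lemma \ref{lem::S_1}); and minimal symmetric bands reduce to $\mathbf{S}_2$, which is isomorphic to a hereditary algebra of type $\widetilde{\mathbb{D}}_{n+1}$ and is $\tau$-tilting infinite for that reason (Lemma \ref{lem::S_2}). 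Your alternative route via the $\mathbb{Z}/2$-unfolding is closer in spirit to a workable argument, but as stated it is only a sketch: it requires transferring $\tau$-tilting finiteness, representation type, and the existence of bands across the skew group algebra construction, and it excludes characteristic $2$. As written, the proposal does not close the implication ``band $\Rightarrow$ $\tau$-tilting infinite.''
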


To prove Theorem \ref{thm::1.1}, we make use of the complete classification of indecomposable modules over a skew-gentle algebra in terms of asymmetric and symmetric strings and bands (Section \ref{section:skew-gentle}). This classification was first obtained in \cite{CB-skew-gentle} for the local algebra $\Bbbk \langle \epsilon, \alpha \mid \epsilon^2=\epsilon,\ \alpha^2=0 \rangle$, and was subsequently extended to arbitrary clannish algebras in \cite{CB-skew-gentle-2}, see also  \cite{B91,Deng00,Hansper-thesis}.

A triangulated category is said to be \emph{silting-discrete} (see Definition~\ref{Def:silting-discrete}) if there are only finitely many silting objects between any two given silting objects with respect to a natural partial order (see Section~\ref{Section:silting-discrete}). There is a bijection between support $\tau$-tilting modules and $2$-term silting objects  \cite[Theorem~3.2]{AIR14}, and silting-discreteness is a stronger property than $\tau$-tilting finiteness. However, these two notions are closely intertwined: silting-discreteness can be detected through the $\tau$-tilting finiteness of endomorphism algebras of silting objects, as shown in \cite[Theorem~2.4]{AM17}.

For a graded marked surface $(S,\eta)$, where $S$ is a marked surface and $\eta$ is a line field on $S$, it is shown in \cite{HKK17} that the partially wrapped Fukaya category of $(S,\eta)$ is equivalent to the derived category of a graded gentle algebra arising from a collection $\Delta$ of simple arcs, called a dissection. Conversely, it is shown in \cite{LP20} that for any homologically smooth graded gentle algebra $A$, one can construct a   graded marked surface $(S,\eta)$ such that the partially wrapped Fukaya category of $(S, \eta)$ is equivalent to the perfect derived category of $A$. Our second main result provides a complete classification of silting-discrete graded gentle algebras in terms of certain invariants of the associated surface.

\begin{theorem}[Theorem \ref{thm:main1} and Corollary \ref{Cor:siltingdiscretegentle}]\label{thm::1.2}
    Let $A$ be a homologically smooth, proper graded gentle algebra and let $(S,\eta,\Delta)$ be the associated graded surface with boundary components $\partial_i$, for $1 \leq i \leq b$. Then the following are equivalent:
    \begin{enumerate}
        \item The perfect derived category $\per(A)$ is silting-discrete.
        \item $S$ is of genus $0$ and admits no simple closed curves with winding number $0$.
        \item $S$ is of genus $0$ and the following `No Equipartition' condition is satisfied
        \[ \sum_{j\in J}m_j\not=\frac{1}{2}\sum_{j=1}^bm_j  \] 
        for all $J\subset\{1,2,\ldots,b\}$,
        where  $m_i=\omega_\eta(\partial_i)+2$ for $1\le i\le b$. 
    \end{enumerate}
\end{theorem}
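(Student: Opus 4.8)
The plan is to prove the three equivalences by first reducing the homological statement (1) to a purely geometric condition on $(S,\eta)$, and then carrying out the surface-topological analysis that yields (2) and (3). For the reduction I would invoke the criterion recorded in \cite[Theorem~2.4]{AM17}: $\per(A)$ is silting-discrete if and only if $\End(M)$ is $\tau$-tilting finite for every silting object $M\in\per(A)$. Since $A$ is a homologically smooth proper graded gentle algebra, its silting objects correspond to the graded dissections $\Delta'$ of $(S,\eta)$, and each endomorphism algebra $\End(M)=A_{\Delta'}$ is again a finite-dimensional gentle algebra whose surface model is the \emph{same} graded surface $(S,\eta)$. By the gentle case of Theorem~\ref{thm::1.1} (that is, \cite{P19}), $A_{\Delta'}$ is $\tau$-tilting finite if and only if it admits no band module. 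Because the band modules of $A_{\Delta'}$ are exactly the primitive non-contractible closed curves on $S$ of winding number $0$ with respect to $\eta$ — a set that is independent of the chosen dissection $\Delta'$ — this reduces (1) to the intrinsic condition that $(S,\eta)$ carries no primitive closed curve $\gamma$ with $\omega_\eta(\gamma)=0$.

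Next I would settle the combinatorial equivalence (2)$\Leftrightarrow$(3). On a genus-$0$ surface every essential simple closed curve $\gamma$ is separating, and so cuts $S$ into two planar pieces, distinguishing a subset $J\subset\{1,\dots,b\}$ of boundary components from its complement. Applying the Poincaré--Hopf-type index formula for the line field $\eta$ to the piece bounded by $\gamma$ and $\{\partial_j:j\in J\}$, and rewriting the outcome through $m_j=\omega_\eta(\partial_j)+2$, a direct computation gives
\[ \omega_\eta(\gamma)=\tfrac{1}{2}\sum_{k=1}^{b}m_k-\sum_{j\in J}m_j \]
up to the sign fixed by the orientation of $\gamma$. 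Conversely, every subset $J$ is realized by such a separating simple closed curve. Hence an $\omega_\eta=0$ simple closed curve exists precisely when $\sum_{j\in J}m_j=\tfrac{1}{2}\sum_k m_k$ for some $J$, that is, exactly when `No Equipartition' fails; this is (2)$\Leftrightarrow$(3).

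It then remains to match the geometric reduction of (1) with condition (2), namely to show that $(S,\eta)$ has no primitive $\omega_\eta=0$ closed curve if and only if $S$ has genus $0$ and admits no $\omega_\eta=0$ simple closed curve. One direction is immediate. For the other I would argue in two steps. First, if the genus is positive, I would produce an $\omega_\eta=0$ band directly: starting from a non-separating simple closed curve and modifying it by Dehn twists along a transverse curve changes its winding number in an arithmetic progression, and one verifies that the value $0$ is always attained, so positive genus forces a band. Second, on a genus-$0$ surface I would show that the existence of \emph{any} primitive $\omega_\eta=0$ closed curve already forces the existence of an $\omega_\eta=0$ simple one, by smoothing self-intersections one at a time and tracking the $\pm2$ contribution of each resolution against the subset-sum formula above.

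I expect this final step to be the main obstacle: the reduction from arbitrary $\omega_\eta=0$ bands to \emph{simple} ones on the genus-$0$ surface, together with the winding-number bookkeeping under Dehn twists in the positive-genus case. The difficulty is that a non-simple primitive curve may carry a homology class $\sum_j a_j[\partial_j]$ with $a_j\notin\{0,1\}$, lying outside the range realized by separating simple closed curves, so that its vanishing winding number is not formally equivalent to `No Equipartition'. By contrast, the reduction via \cite{AM17} together with Theorem~\ref{thm::1.1}, and the index-formula computation giving (2)$\Leftrightarrow$(3), are the clean parts; the geometric heart of the argument is precisely the statement that no additional bands occur beyond those already detected by simple closed curves.
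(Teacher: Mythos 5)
There is a genuine gap, and it sits exactly where you place your ``main obstacle''. Your reduction of (1) rests on the claim that the band modules of $\End(M)=A_{\Delta'}$ are \emph{exactly} the primitive closed curves of winding number $0$, independently of the dissection $\Delta'$, so that (1) becomes ``no primitive closed curve with $\omega_\eta=0$''. This intermediate condition is strictly stronger than (2) and is \emph{not} equivalent to (1): the paper's Example following Lemma \ref{lem:key2} exhibits a pair of pants carrying a figure-eight (non-simple, primitive) closed curve $\ell$ with $\omega_\eta(\ell)=0$ while $\per(A)$ \emph{is} silting-discrete, so that no silting object has a representation-infinite endomorphism algebra. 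Hence a primitive $\omega_\eta=0$ curve need not arise as a band module of any $\End(M)$, and the final step you flag --- smoothing self-intersections to produce a \emph{simple} $\omega_\eta=0$ curve from an arbitrary one --- is not merely difficult but impossible in general. Your correct instinct that such a curve may have homology class outside the range of separating simple closed curves is precisely the reason the equivalence you need fails.

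The paper circumvents this by never quantifying over all $\omega_\eta=0$ curves. For sufficiency (Lemma \ref{lem:key3}) it invokes \cite[Lemma 4.1]{P19}: if $\h^0(A)$ is representation-infinite, the degree-zero subquiver contains one of two explicit configurations (a cycle of type $\tilde{A}_m$, or two cycles joined by a path), and in each case a carefully chosen homotopy band ($w_lww_rw^{-1}$ rather than $w_lww_r^{-1}w^{-1}$ in the second case) is shown to trace out a \emph{simple} closed curve of winding number zero; the Remark after the lemma shows the other choice gives a non-simple curve, which is exactly the phenomenon your smoothing argument would have to confront. For necessity, the paper does not classify bands either: given a simple $\omega_\eta=0$ curve it builds a dissection containing two arcs whose smoothing is that curve, obtains a pre-silting object, and completes it using Proposition \ref{Prop:silt}; the genus obstruction (Lemma \ref{lem:key1}) is handled via the pre-silting/partial-silting characterization of \cite{JSW23} together with an explicit Kronecker construction in genus one, rather than by your Dehn-twist argument, whose claim that winding number $0$ is always attained on positive-genus surfaces is itself unverified. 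Your treatment of (2)$\Leftrightarrow$(3) via the Poincar\'e--Hopf formula on the two pieces cut out by a separating simple closed curve matches the paper's Corollary \ref{Cor:siltingdiscretegentle} and is correct.
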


One notable property of a silting-discrete triangulated category shown in \cite{AM17}
is that any pre-silting object is partial silting (see Proposition \ref{Prop:silt}). Our approach to Theorem \ref{thm::1.2} relies on a complete characterization of this property given in \cite{JSW23} for graded gentle algebras. More precisely, it is shown in \cite{JSW23} that any pre-silting object is partial silting in the derived category of a graded gentle algebra if and only if the associated surface model is either of genus $0$, or of genus $1$ such that the winding numbers of simple closed curves satisfy certain special conditions (see Theorem \ref{JSW}). Combining this with Theorem \ref{thm::1.2}, we see that the property that every pre-silting object is partial silting is not sufficient to guarantee silting-discreteness.

Another important finiteness condition for an algebra is derived-discreteness \cite{V01}. It is well-established that a finite-dimensional algebra $A$ (not derived equivalent to a Dynkin quiver) is derived-discrete if and only if it is a gentle one-cycle algebra not satisfying the clock condition \cite{V01, BGS04}. In terms of the associated surface model, this characterization can be reinterpreted as follows: $A$ is derived-discrete if and only if its associated surface is an annulus that admits no simple closed curves with winding number zero.
While the fact that derived-discreteness implies silting-discreteness is already known (cf. \cite[Proposition 6.12]{BPP16}, \cite{YY21}, and \cite{AH24}), as a direct application of Theorem \ref{thm::1.2}, our geometric framework provides another approach to this implication, see Corollary \ref{Cor:deriveddiscreteimpliessiltingdiscrete}.

Theorems \ref{thm::1.1} and \ref{thm::1.2} above enable us to study the silting-discreteness of graded skew-gentle algebras. Similarly to graded gentle algebras, we can also associate to any graded skew-gentle algebra $A$ a surface dissection $\Delta$ of a graded marked surface $(S,O,\eta,\zD)$, where $O$ is the set of orbifold points of order two corresponding to the special loops of $A$ (see \cite{LSV22, QZZ22, BSW24}). The last result of our paper is as follows.

\begin{theorem}[Theorem \ref{Thm:siltingdiscreteforskewgentle}]\label{Thm:Amain3}
Let $A$ be a graded skew-gentle algebra with surface model $(S,O,\eta,\Delta)$. Consider the following conditions.
  \begin{enumerate}
      \item $\per(A)$ is silting-discrete.
      \item $S$ is of genus $0$ and $\# O\le 1$. Moreover, $\omega_\eta(\gamma)\neq 0$ for any simple closed curve $\gamma$.
      \item $S$ is of genus $0$ and 
        \[ \sum_{j\in J}m_j\not=2 \mbox{ for all $J\subset\{1,2,\ldots,b\}$,} \] 
        where  $m_i=\omega_\eta(\partial_i)+2$ for $1\le i\le b$. 
 \end{enumerate}
 Then (1) implies (2), and (2) is equivalent to (3).
\end{theorem}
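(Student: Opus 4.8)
The plan is to prove the two assertions by different means: $(1)\Rightarrow(2)$ reduces the silting-theoretic input to the already-settled graded gentle case, whereas $(2)\Leftrightarrow(3)$ is a purely geometric computation of winding numbers on the orbifold surface. The bridge in both steps is the passage from the orbifold surface $(S,O,\eta,\Delta)$ to an associated graded \emph{gentle} surface $S^{\circ}$ obtained by resolving each order-two orbifold point into a boundary component; this leaves the genus unchanged and converts the data $O$ into additional boundary components carrying controlled winding numbers.

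For $(1)\Rightarrow(2)$ I would first deduce that $S$ has genus $0$ and carries no winding-$0$ simple closed curve. Let $A^{\circ}$ denote the graded gentle algebra attached to $S^{\circ}$. Since a simple closed curve on $S$ persists, with the same winding number, as a simple closed curve on $S^{\circ}$, it suffices to show that silting-discreteness of $\per(A)$ forces silting-discreteness of $\per(A^{\circ})$ and then to invoke Theorem \ref{thm::1.2}, which yields that $S^{\circ}$, and hence $S$, is of genus $0$ with no winding-$0$ simple closed curve. I emphasize that using the full strength of Theorem \ref{thm::1.2}, rather than only Proposition \ref{Prop:silt} together with Theorem \ref{JSW}, is essential here, precisely because the property that every pre-silting object is partial silting does not by itself exclude genus $1$. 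It then remains to bound $\#O$: for this I would use Proposition \ref{Prop:silt} in contrapositive form, showing that two order-two orbifold points on a genus-$0$ surface support a local arc configuration whose associated pre-silting object in $\per(A)$ admits no complement, so that silting-discreteness forces $\#O\le 1$.

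For $(2)\Leftrightarrow(3)$, genus $0$ is common to both conditions, so the task is to translate the winding-number condition of $(2)$ into the arithmetic condition of $(3)$. Applying Poincar\'e--Hopf to the line field $\eta$ after capping the boundary components (each cap of $\partial_i$ contributing index $m_i/2$) and accounting for the half-integer index at each order-two orbifold point, I obtain a global identity for $\sum_{i=1}^{b}m_i$ together with a formula expressing $\omega_\eta(\gamma)$, for a simple closed curve $\gamma$ enclosing the boundary components indexed by $J\subseteq\{1,\dots,b\}$, in terms of $\sum_{j\in J}m_j$ and the number of enclosed orbifold points. Because $\#O\le 1$ is in force, every subset $J$ is realized by a curve enclosing no orbifold point, whose winding number vanishes exactly when $\sum_{j\in J}m_j=2$; this matches $(2)$ with $(3)$, and the boundary cases (for instance $J$ the full index set) are dispatched by complementation via the global identity. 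Conversely, the same formula shows that a second orbifold point would produce an essential winding-$0$ curve, which is the geometric counterpart of a violation of $(3)$.

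The hard part, I expect, is the $(1)\Rightarrow(2)$ direction, and within it the two steps that are genuinely new relative to the gentle theory: establishing that silting-discreteness descends to the resolved gentle algebra $A^{\circ}$ (so that Theorem \ref{thm::1.2} applies and genus $1$ is excluded), and producing the explicit non-completable pre-silting object from a pair of orbifold points that yields $\#O\le 1$. The computation underlying $(2)\Leftrightarrow(3)$ is routine in outline but demands careful bookkeeping of the sign and normalization conventions for winding numbers on orbifolds, in particular the half-integer contributions of the order-two cone points.
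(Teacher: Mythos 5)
Your outline for $(2)\Leftrightarrow(3)$ is essentially the paper's argument (Poincar\'e--Hopf applied to the subsurface cut out by a separating curve, always computed from the side not containing the orbifold point), so I will concentrate on $(1)\Rightarrow(2)$, where both of your ``genuinely new'' steps have real gaps. The first is the descent claim for the resolved surface $S^{\circ}$. Turning an order-two orbifold point into a boundary component produces a gentle algebra $A^{\circ}$ that is \emph{not} derived equivalent to $A$, nor is either algebra an idempotent subalgebra of the other; there is no functor between $\per(A)$ and $\per(A^{\circ})$ through which silting-discreteness could be transported, and your plan gives none. (Also, the winding numbers do not all persist: a loop around an orbifold point has winding number $1$, while a loop around the boundary component replacing it has a different winding number, so even the geometric comparison of simple closed curves is not as stated.) The paper's Proposition \ref{prop:Sge1} does something different: after passing to a derived-equivalent skew-gentle presentation $A'$ in which the special vertex is a leaf of the quiver, it takes the \emph{idempotent subalgebra} $A''=eA'e$, whose surface is obtained by simply erasing the orbifold point and its arc (no new boundary is created, and the genus is unchanged); silting-discreteness is inherited by $eA'e$ by \cite[Theorem 2.10]{AH24}, and then Theorem \ref{thm:main1} applies. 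If you want to salvage your route, this idempotent reduction is the mechanism you are missing.

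The second gap is your treatment of $\#O\le 1$. You propose to exhibit, from two orbifold points, a pre-silting object admitting \emph{no} complement and to conclude via Proposition \ref{Prop:silt}. There is no reason to expect the relevant object to be non-completable, and no obstruction theory (no analogue of Theorem \ref{JSW}) is available for skew-gentle algebras to prove such a statement. The paper's Proposition \ref{prop:Oge2} argues in the opposite way: it \emph{uses} silting-discreteness to complete the pre-silting object $eA$ (where $e=e_1+e_2$ covers the two special vertices, arranged by a choice of dissection so that $eA$ is pre-silting and $\End(eA)\cong eAe$), and then derives a contradiction from Proposition \ref{Prop:equ-silting-discrete}(5), because $eAe$ contains the minimal symmetric band $\epsilon_1\alpha\epsilon_2\alpha^{-1}$ and is therefore representation-infinite, hence $\tau$-tilting infinite by Theorem \ref{theo::main-result-skew-gentle} (this is exactly where the $\widetilde{\mathbb{D}}$-type Lemma \ref{lem::S_2} enters, and it is the reason Theorem \ref{thm::1.1} is needed for this theorem at all). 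As written, your plan replaces the two steps that actually carry the proof with assertions that are either unsupported (the descent to $A^{\circ}$) or likely false in the form stated (non-completability), so it does not yet constitute a viable argument for $(1)\Rightarrow(2)$.
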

To prove Theorem \ref{Thm:Amain3}, we use a similar strategy
as for Theorem \ref{thm::1.2}. However, since the endomorphism algebra of a silting object in the derived category of a graded skew-gentle algebra may no longer be skew-gentle, we cannot directly apply Theorem \ref{thm::1.1} to prove the converse direction of the above theorem by showing that the endomorphism algebra of any silting object is $\tau$-tilting finite. That is, we cannot prove that condition $(2)$ implies condition $(1)$. Nevertheless, we still expect the three conditions in Theorem \ref{Thm:Amain3} to be equivalent. 

The paper is organized as follows. In Section 2, we recall background material on $\tau$-tilting theory, silting-discrete triangulated categories, graded gentle algebras and their surface models, as well as skew-gentle algebras. Section 3 is devoted to the proof of Theorem \ref{thm::1.1}. In Section 4, we study silting-discreteness for graded gentle algebras and prove Theorem \ref{thm::1.2}, dividing the proof into its necessary and sufficient parts. In Section 5, we  apply Theorems \ref{thm::1.1} and \ref{thm::1.2} to study silting-discreteness for graded skew-gentle algebras and prove Theorem \ref{Thm:Amain3}.

\section*{Acknowledgments}
The authors would like to thank Zhengfang Wang for many helpful discussions and for clarifying various questions regarding graded skew-gentle algebras. 

The first author is supported by the Fundamental Research Funds for the Central Universities (No.~GK202403003) and the NSF of China (No.~12271321). 
The second author is supported by the National Key R\&D Program of China (No.~2024YFA1013801) and the Science and Technology Commission of Shanghai Municipality (No.~22DZ2229014). 
The third author is supported by the DFG through the project SFB/TRR 191 (No.~281071066-TRR 191). 
The fourth author is supported by the NSF of China (No.~12401048) and the Fundamental Research Funds for the Central Universities (No.~DUT25RC(3)132).

\section{Preliminaries}

In this paper, all algebras will be assumed to be over a base field $\Bbbk$ which is algebraically closed, and all (differential graded) modules considered are right modules.

\subsection{$\tau$-tilting finiteness}\label{Section:tau-tilting}
Let $A$ be a finite-dimensional $\Bbbk$-algebra. We denote by $\mod A$ the category of finitely generated right $A$-modules, and $\tau$ denotes the Auslander-Reiten translation in $\mod A$. 

For any $M\in \mod A$, let $|M|$ be the number of isomorphism classes of indecomposable direct summands of $M$. Following \cite{AIR14}, $M$ is called \emph{$\tau$-rigid} if $\Hom_A(M,\tau M)=0$, and is called \emph{$\tau$-tilting} if $M$ is $\tau$-rigid and $|M|=|A|$.

\begin{Def}\label{def::tau-tilt-finite}
A finite-dimensional $\Bbbk$-algebra $A$ is called \emph{$\tau$-tilting finite} if it admits only finitely many $\tau$-tilting modules up to isomorphisms, and \emph{$\tau$-tilting infinite} otherwise.
\end{Def}

By \cite[Theorem 0.2]{AIR14}, every $\tau$-rigid $A$-module occurs as a direct summand of some $\tau$-tilting $A$-module. Consequently, $A$ is $\tau$-tilting finite if and only if $A$ has only finitely many pairwise non-isomorphic indecomposable $\tau$-rigid modules. There are several known criteria equivalent to $\tau$-tilting finiteness; in particular, we will use in this paper the so-called \emph{brick-$\tau$-rigid correspondence} as follows.

A right $A$-module $M$ is called a \emph{brick} if $\End_A(M)\simeq \Bbbk$. We say that $A$ is \emph{brick-finite} if it admits only finitely many non-isomorphic bricks, and \emph{brick-infinite} otherwise.
Let $\brick A$ be the set of bricks in $\mod A$, and $\fbrick A$ the set of bricks $M$ such that the smallest torsion class containing $M$ is functorially finite. 

\begin{Prop}[{\cite[Theorem 4.2]{DIJ19}}]\label{prop::brick-tau-rigid}
There exists a bijection sending indecomposable $\tau$-rigid $A$-modules in $\mod A$ to bricks in $\fbrick A$, by 
\begin{center}
$M\mapsto M/\rad_B(M), \quad B:=\End_A(M)$,
\end{center}
If $A$ is $\tau$-tilting finite, then $\fbrick A=\brick A$.
\end{Prop}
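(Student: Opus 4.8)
The plan is to realize the correspondence through the lattice $\mathsf{tors}\,A$ of torsion classes in $\mod A$ together with its brick labeling, which is the framework of \cite{DIJ19}; throughout, write $\operatorname{Fac}(X)$ for the class of factor modules of finite direct sums of copies of $X$. First I would record the local structure of the source. If $M$ is indecomposable and $\tau$-rigid, then $B=\End_A(M)$ is a finite-dimensional local $\Bbbk$-algebra by Fitting's lemma, so $\rad B$ is its unique maximal ideal, consisting exactly of the non-invertible (nilpotent) endomorphisms, and $B=\Bbbk\oplus\rad B$ since $\Bbbk$ is algebraically closed. Consequently $\rad_B(M)=\sum_{g\in\rad B}\im(g)$ is a proper submodule of $M$, so $S_M:=M/\rad_B(M)$ is a nonzero quotient (the top of $M$ regarded as a $B$-module), and I denote by $\pi\colon M\to S_M$ the projection.

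The second, and most concrete, step is to show directly that $S_M$ is a brick. Since $M$ is $\tau$-rigid, $\operatorname{Fac}(M)$ is a functorially finite torsion class and $M$ is $\Ext$-projective in it, i.e.\ $\Ext^1_A(M,\operatorname{Fac}(M))=0$ by \cite{AIR14}. Choosing a $\Bbbk$-basis of $\rad B$ exhibits $\rad_B(M)$ as the image of a map from a finite direct sum of copies of $M$, hence $\rad_B(M)\in\operatorname{Fac}(M)$; applying $\Hom_A(M,-)$ to $0\to\rad_B(M)\to M\xrightarrow{\pi}S_M\to 0$ therefore yields a surjection $B=\End_A(M)\to\Hom_A(M,S_M)$. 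Now take $\phi\in\End_A(S_M)$ and lift $\phi\circ\pi$ to some $\psi\in B$ with $\pi\psi=\phi\pi$. Writing $\psi=\lambda\,\mathrm{id}_M+n$ with $\lambda\in\Bbbk$ and $n\in\rad B$, we have $\im(n)\subseteq\rad_B(M)=\ke\pi$, whence $\phi\pi=\pi\psi=\lambda\pi$, and so $\phi=\lambda\,\mathrm{id}_{S_M}$ because $\pi$ is epic. Thus $\End_A(S_M)\simeq\Bbbk$ and $S_M$ is a brick.

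It remains to promote $M\mapsto S_M$ to a bijection onto $\fbrick A$. I would do this within the torsion-theoretic framework of \cite{DIJ19}, associating to $M$ its functorially finite torsion class $\operatorname{Fac}(M)=\operatorname{Gen}(M)$, and to a brick $S\in\fbrick A$ the smallest torsion class $\T_S$ containing $S$, which is functorially finite by the very definition of $\fbrick A$. The plan is to verify that these assignments are mutually inverse: that $\T_{S_M}=\operatorname{Fac}(M)$, that the canonical brick label which the Hasse quiver of $\mathsf{tors}\,A$ attaches to the cover relation out of $\operatorname{Fac}(M)$ (obtained by mutating away the $\Ext$-projective summand $M$) is precisely $S_M$, and that this label determines $M$ uniquely. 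Injectivity of $M\mapsto S_M$ is then immediate, since $S_M$ recovers the torsion class $\operatorname{Fac}(M)$ and hence its distinguished summand $M$, while surjectivity onto $\fbrick A$ is the statement that every $S\in\fbrick A$ arises as such a label. I expect the identification of the intrinsic module $M/\rad_B(M)$ with the combinatorial arrow label to be the main obstacle: it is the sole point where the internal structure of $M$, rather than the lattice combinatorics of $\mathsf{tors}\,A$, is needed, and it is what ties the brick property proved above to the labeling theory.

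Finally, for the last assertion I would invoke the characterization that $A$ is $\tau$-tilting finite if and only if $\mathsf{tors}\,A$ is finite, equivalently every torsion class in $\mod A$ is functorially finite \cite{DIJ19}. Under this hypothesis the smallest torsion class $\T_S$ containing an arbitrary brick $S$ is automatically functorially finite, so the defining condition of $\fbrick A$ is satisfied by every brick, giving $\fbrick A=\brick A$.
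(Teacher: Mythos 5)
The paper does not actually prove this statement: it is imported verbatim from \cite[Theorem 4.2]{DIJ19} with no argument given, so the only meaningful comparison is with the source. Measured against that, your second paragraph is a complete and correct proof that $S_M=M/\rad_B(M)$ is a brick, and it is essentially the argument of \cite{DIJ19}: locality of $B=\End_A(M)$ by Fitting's lemma, $\Ext^1_A(M,\operatorname{Fac}(M))=0$ from $\tau$-rigidity, the resulting surjection $\End_A(M)\to\Hom_A(M,S_M)$, and the computation $\End_A(S_M)\simeq\Bbbk$. The final paragraph, reducing $\fbrick A=\brick A$ to the equivalence between $\tau$-tilting finiteness and functorial finiteness of all torsion classes, is also fine.

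The bijectivity, however, is announced rather than proved, and that is where all the remaining content of the theorem lives. Three assertions are left as ``the plan is to verify'': (i) that the smallest torsion class $\T_{S_M}$ containing $S_M$ equals $\operatorname{Fac}(M)$ --- this is needed even for well-definedness of the map into $\fbrick A$ (as opposed to $\brick A$), since $\operatorname{Fac}(M)$ is what is known to be functorially finite; it can be shown by filtering $M$ by the submodules $\rad_B^i(M)$ and noting that each layer is a quotient of a direct sum of copies of $S_M$, but you do not do this. (ii) Injectivity: ``$S_M$ recovers $\operatorname{Fac}(M)$ and hence its distinguished summand $M$'' is not immediate, because a functorially finite torsion class has in general several indecomposable Ext-projective objects; one must argue that $M$ is the unique indecomposable split projective object of $\operatorname{Fac}(M)$ (any epimorphism $M^n\to M$ splits since $B$ is local and $\rad_B(M)\neq M$), or genuinely invoke the brick-labelling theorem you allude to. (iii) Surjectivity onto $\fbrick A$ is merely restated. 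You correctly flag the identification of $M/\rad_B(M)$ with the combinatorial arrow label as ``the main obstacle'', but an obstacle named is not an obstacle overcome: as written, the proposal establishes well-definedness into $\brick A$ and defers the rest to \cite{DIJ19}, which is no more than the paper itself does by citing the result.
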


We are now able to see that $A$ is $\tau$-tilting finite if and only if it is brick-finite.

There is a larger set $\stautilt A$ of \emph{support $\tau$-tilting $A$-modules}, consisting of all $\tau$-tilting $\left ( A/A e A\right )$-modules for some idempotent $e$ of $A$. The most fascinating aspect of $\tau$-tilting theory is, in fact, built upon and further developed from the structure of $\stautilt A$. Since we will not study $\stautilt A$ directly, we omit the details, but refer to \cite{AIR14} and \cite{DIRRT}, to name just two, for more materials.

\subsection{Silting-discreteness}\label{Section:silting-discrete}
Let $\T$ be a triangulated $\Bbbk$-category and $P\in \T$. We denote by $\thick_{\T}(P)$ the smallest full triangulated subcategory of $\T$ that contains $P$ and is closed under taking direct summands. For a differential graded $\Bbbk$-algebra $A$, we denote by $\D(A)$ the \emph{derived category} of $A$, and by $\per(A):=\thick_{\D(A)}(A)$ the \emph{perfect derived category} of $A$. We denote by $\D^{\bb}(A)$ the full subcategory of $\D(A)$ consisting of the objects $M$ whose total cohomology is finite-dimensional (that is, $\sum_n \dim_{\Bbbk}\; \h^{n}(M)<\infty$). 

If $\per (A)\subset \D^{\bb}(A)$, we say that $A$ is \emph{proper}. Note that a differential graded algebra with a trivial differential is proper if and only if it is finite-dimensional. If $A\in \per (A\ot_{\Bbbk}A^{\rm op})$, we say that $A$ is \emph{homologically smooth}. Note that if $A$ is homologically smooth and proper, then we have $\per(A)=\D^{\rm b}(A)$. 

We recall from \cite[Definition 2.1]{AI12} that an object $P$ in $\T$ is called \emph{pre-silting} if $\Hom_{\T}(P, P[n])=0$ for all $n\ge 1$, and a pre-silting object $P$ is called \emph{silting} if $\thick_\T(P)=\T$. We denote by $\silt \T$ the set of isomorphism classes of silting objects in $\T$. In this paper, we always assume that $\silt \T$ is nonempty. 

There is a partial order on $\silt \T$ (see \cite[Theorem 2.11]{AI12}) defined as 
\[
P\geq Q \quad \text{ if }\quad \Hom_\T(P,Q[i])=0, \; (i>0)
\]
for $P,Q \in \silt \T$. 

For any finite-dimensional $\Bbbk$-algebra $A$, the perfect derived category $\per(A)$ is a triangulated category with a silting object $A$. 

For any $P\geq Q\in \silt \T$, let $[P,Q]$ be the set of isomorphism classes of silting objects $M$ such that $P\geq M \geq Q$. We note that $[P,Q]$ is again a poset under the restriction of the partial order of $\silt \T$. The notion of silting-discreteness was introduced as follows.

\begin{definition}[{\cite[Definition 3.6]{Aihara13}}]\label{Def:silting-discrete}
Let $\T$ be a triangulated category with silting objects. We say that $\T$ is \emph{silting-discrete} if for any $P, Q$ in $\silt \T$ with $P\geq Q$, the poset $[P, Q]$ is finite.
\end{definition}

There are many equivalent characterizations of silting-discreteness. We recall some of them in the following proposition, where the equivalence of (1), (2), and (3) is due to \cite[Proposition 3.8]{Aihara13}, the equivalence of (1) and (4) is established in \cite{AM17}, and the equivalence of (4) and (5) follows from \cite[Corollary 2.8]{DIJ19}. 

\begin{proposition}\label{Prop:equ-silting-discrete}
Let $\T$ be a Hom-finite Krull-Schmidt triangulated category with silting objects.
The following conditions are equivalent:
\begin{enumerate}
\item $\T$ is silting-discrete.

\item The poset $[P,P[\ell]]$ is finite for any $P\in \silt\T$ and any $\ell\in \mathbb{N}$.

\item There exists an object $P\in \silt\T$ such that $[P,P[\ell]]$ is finite for any $\ell\in \mathbb{N}$.

\item The poset $[P,P[1]]$ is finite for any $P\in \silt\T$.

\item The algebra $\End_{\T}(P)$ is $\tau$-tilting finite for any $P\in \silt\T$.
\end{enumerate}
\end{proposition}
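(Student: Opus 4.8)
The plan is to isolate three genuinely distinct pieces of content and treat the remaining implications as formal consequences. Throughout I use that a silting object $P$ is in particular pre-silting, so $\Hom_\T(P,P[n])=0$ for all $n\ge 1$; hence $P\ge P[\ell]$ for every $\ell\ge 0$, and each $[P,P[\ell]]$ is a genuine interval of $(\silt\T,\ge)$. With this, $(1)\Rightarrow(2)\Rightarrow(3)$ and $(2)\Rightarrow(4)$ are immediate, the latter being the case $\ell=1$, and $(3)$ is a special instance of $(2)$. The three steps requiring argument are $(3)\Rightarrow(1)$, $(4)\Rightarrow(2)$, and $(4)\Leftrightarrow(5)$.

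For $(3)\Rightarrow(1)$ I would use a sandwiching argument. Fix the object $P_0$ from $(3)$ and let $P\ge Q$ be an arbitrary pair in $\silt\T$. Since $\thick_\T(P_0)=\T$ and $\T$ is Hom-finite, every silting object lies in a bounded range relative to $P_0$; concretely $\Hom_\T(P_0,P[n])=0$ and $\Hom_\T(Q,P_0[n])=0$ for $n\gg 0$. Translating through the definition of the order, this yields integers $a\ll 0\ll b$ with $P_0[a]\ge P$ and $Q\ge P_0[b]$, hence $P_0[a]\ge P\ge Q\ge P_0[b]$ and $[P,Q]\subseteq[P_0[a],P_0[b]]$. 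Applying the shift autoequivalence $[-a]$, which preserves the order, identifies $[P_0[a],P_0[b]]$ with $[P_0,P_0[b-a]]$, which is finite by $(3)$ because $b-a\ge 0$. Thus $[P,Q]$ is finite, giving silting-discreteness.

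For $(4)\Leftrightarrow(5)$ I would pass through the module theory of $B:=\End_\T(P)$. The interval $[P,P[1]]$ consists precisely of the two-term silting objects relative to $P$, and the two-term silting / support $\tau$-tilting correspondence of \cite[Theorem 3.2]{AIR14} (in its relative form, via silting reduction) gives a bijection $[P,P[1]]\cong\stautilt B$. Hence $[P,P[1]]$ is finite if and only if $\stautilt B$ is finite, and by \cite[Corollary 2.8]{DIJ19} the latter holds if and only if $B$ is $\tau$-tilting finite. Ranging over all $P\in\silt\T$ yields the equivalence of $(4)$ and $(5)$.

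The remaining and most substantive step is $(4)\Rightarrow(2)$, which I would prove by induction on $\ell$, establishing the statement $S(\ell)$: \emph{$[M,M[\ell]]$ is finite for every $M\in\silt\T$}. The base case $S(1)$ is exactly $(4)$. For the inductive step the key device is a layer (truncation) lemma, most naturally phrased through the bounded co-$t$-structure attached to a silting object: any $N\in[P,P[\ell]]$ has a canonical top layer $M\in[P,P[1]]$ — the largest element of $[P,P[1]]$ dominating $N$ — after which $N$ lies in $[M,M[\ell-1]]$. This exhibits a map $[P,P[\ell]]\to[P,P[1]]$, $N\mapsto M$, whose fibres inject into the intervals $[M,M[\ell-1]]$; finiteness of $[P,P[1]]$ (from $(4)$) together with the inductive hypothesis $S(\ell-1)$ applied to each such $M$ then bounds $[P,P[\ell]]$. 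I expect verifying this decomposition — precisely, that the truncation $M$ is well defined and that $N\ge M[\ell-1]$ — to be the main obstacle, and it is exactly the place where silting reduction and the associated co-$t$-structures do the work; alternatively one may quote the direct equivalence $(1)\Leftrightarrow(4)$ of \cite{AM17}. Combining $(4)\Rightarrow(2)$ with the trivial $(2)\Rightarrow(4)$ closes all five conditions into a single equivalence.
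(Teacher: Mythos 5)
Your proposal is correct and takes essentially the same route as the paper, which gives no proof of its own but simply cites \cite[Proposition 3.8]{Aihara13} for the equivalence of (1)--(3), \cite{AM17} for (1)$\Leftrightarrow$(4), and \cite[Corollary 2.8]{DIJ19} for (4)$\Leftrightarrow$(5): your sandwich argument for (3)$\Rightarrow$(1) is exactly Aihara's, your (4)$\Leftrightarrow$(5) is the standard two-term silting/support $\tau$-tilting bijection combined with \cite{DIJ19}, and your layer-lemma induction for (4)$\Rightarrow$(2) is precisely the truncation mechanism of \cite{AM17}, which you correctly identify as the one step needing either the co-$t$-structure verification or the citation you offer. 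One minor correction: in (3)$\Rightarrow$(1) the bounded-range claim $\Hom_\T(P_0,P[n])=0=\Hom_\T(Q,P_0[n])$ for $n\gg 0$ follows from $\thick_\T(P_0)=\T$ together with pre-siltingness of $P_0$ alone (each silting object is built from finitely many shifts of $P_0$), not from Hom-finiteness, though this does not affect the argument.
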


Let $\T$ be a triangulated category with silting objects. A pre-silting object $P$ in $\T$ is called \emph{partial silting} if there exists an object $Q\in \T$ such that $P\oplus Q$ is a silting object. In general, the set of partial silting objects is much smaller than the set of pre-silting objects. Even in the case where $\T=\per(A)$,
the perfect derived category over a finite-dimensional algebra $A$, there are many examples of pre-silting objects that are not partial silting; see, for example, \cite{JSW23,LZ23}. However, there is a generalization of the Bongartz-type complement theorem for silting-discrete triangulated categories.

\begin{Prop}[{\cite[Theorem 2.15]{AM17}}]\label{Prop:silt}
Let $\T$ be a Hom-finite Krull-Schmidt triangulated category with silting objects. If $\T$ is silting-discrete, then any pre-silting object in $\T$ is partial silting. 
\end{Prop}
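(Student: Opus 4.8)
The plan is to treat the two claimed implications with different tools. The implication (1)$\Rightarrow$(2) is a necessity statement, which I would prove by contraposition: whenever the geometry of $(S,O,\eta,\Delta)$ fails condition (2), I exhibit an explicit obstruction to silting-discreteness. The equivalence (2)$\Leftrightarrow$(3) is, once genus $0$ is assumed, a purely combinatorial translation of ``no winding-number-zero simple closed curve'' by means of an index formula for the line field $\eta$. Accordingly I would organise (1)$\Rightarrow$(2) around the three ways condition (2) can fail---positive genus, the existence of a simple closed curve $\gamma$ with $\omega_\eta(\gamma)=0$, and $\#O\ge 2$---and rule each out separately.

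For the genus obstruction, and in part for the winding-number one, I would invoke Proposition \ref{Prop:silt}: if $\per(A)$ is silting-discrete then every pre-silting object is partial silting. Using the orbifold analogue of Theorem \ref{JSW} for graded skew-gentle algebras, a surface of genus $\ge 2$, or of genus $1$ outside the special winding-number regime, carries a pre-silting object that is not partial silting, contradicting (1). The delicate cases are the ``special'' genus-$1$ surfaces, where every pre-silting object happens to be partial silting so that Proposition \ref{Prop:silt} gives nothing, and the presence of extra orbifold points. For these I would argue directly from Definition \ref{Def:silting-discrete} via Proposition \ref{Prop:equ-silting-discrete}, producing inside a bounded interval $[P,P[1]]$ an infinite family of pairwise non-isomorphic silting objects, built geometrically by mutating along the infinitely many isotopy classes of simple closed curves available on a torus (respectively, along curves separating two orbifold points). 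A simple closed curve $\gamma$ with $\omega_\eta(\gamma)=0$ is handled in the same spirit: it supports a one-parameter family of band-type objects whose associated silting mutations accumulate in $[P,P[1]]$, so $\per(A)$ cannot be silting-discrete. I deliberately avoid passing to $\End_{\per(A)}(P)$ here, since this algebra need not be skew-gentle and Theorem \ref{thm::1.1} would not apply.

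For (2)$\Leftrightarrow$(3) I would work on the genus-$0$ orbifold surface and use the index (Gauss--Bonnet/Poincar\'e--Hopf) formula for line fields: a separating simple closed curve $\gamma$ bounding a subsurface that contains exactly the boundary components indexed by $J$ and no orbifold points satisfies
\[ \omega_\eta(\gamma)=\sum_{j\in J}m_j-2, \qquad m_i=\omega_\eta(\partial_i)+2. \]
For (2)$\Rightarrow$(3): given $\#O\le 1$, for each $J\subseteq\{1,\dots,b\}$ I realise such a $\gamma_J$ enclosing exactly $J$ while pushing the at-most-one orbifold point to the other side, so the hypothesis $\omega_\eta(\gamma_J)\neq 0$ reads $\sum_{j\in J}m_j\neq 2$. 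For (3)$\Rightarrow$(2): I first rule out $\#O\ge 2$, since two orbifold points force, through the global constraint on $\sum_{i=1}^{b}m_i$ coming from $\chi^{\mathrm{orb}}(S)$ together with a curve separating them, an index set $J$ with $\sum_{j\in J}m_j=2$, contradicting (3); once $\#O\le 1$ is known, every simple closed curve bounds an orbifold-point-free subsurface on one side, so the displayed formula gives $\omega_\eta(\gamma)=\sum_{j\in J}m_j-2\neq 0$, which is the remaining half of (2).

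The main obstacle is the necessity direction in the borderline cases---the special genus-$1$ surfaces and $\#O\ge 2$---precisely where the partial-silting criterion of Proposition \ref{Prop:silt} is silent. There I expect the real work to be the explicit geometric construction of an infinite, order-bounded family of silting objects (equivalently, an infinite chain of silting mutations trapped in $[P,P[1]]$) carried out in the orbifold surface model; checking that these objects are pairwise non-isomorphic and genuinely lie between $P$ and $P[1]$ is the step most likely to require careful bookkeeping of gradings and winding numbers, and it is the part that cannot be outsourced to Theorem \ref{thm::1.1} because the relevant endomorphism algebras need not remain skew-gentle.
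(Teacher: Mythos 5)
Your attempt does not engage with the statement it is supposed to prove. Proposition \ref{Prop:silt} is a purely categorical assertion about an arbitrary Hom-finite Krull--Schmidt triangulated category: silting-discreteness forces every pre-silting object to be partial silting. In the paper this statement is not proved at all but quoted from \cite[Theorem 2.15]{AM17}. What you have outlined is instead a proof strategy for Theorem \ref{Thm:siltingdiscreteforskewgentle}, the geometric classification of silting-discreteness for graded skew-gentle algebras via the orbifold surface model $(S,O,\eta,\Delta)$ --- a statement with entirely different content (conditions (1), (2), (3), winding numbers, the index formula, and so on). Worse, your outline explicitly \emph{invokes} Proposition \ref{Prop:silt} as a tool in the genus and winding-number cases, so even under the most charitable reading the argument is circular as a proof of the proposition itself: you cannot establish the partial-silting property of pre-silting objects by appealing to that very property.

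A genuine proof would have to follow Aihara--Mizuno's Bongartz-type completion argument, which makes no reference to surfaces, line fields, or orbifold points: given a pre-silting object $U$ and a silting object $P$ of $\T$, one may assume after a shift that $\Hom_\T(P,U[i])=0$ and $\Hom_\T(U,P[\ell+i])=0$ for all $i>0$ and some $\ell\geq 0$, so that $U$ sits inside the interval $[P,P[\ell]]$; silting-discreteness makes this interval a finite poset, and one completes $U$ to a silting object by induction on $\ell$ using irreducible silting mutation within the interval, the base case being a two-term Bongartz completion (available since, by Proposition \ref{Prop:equ-silting-discrete}, the relevant endomorphism algebras are $\tau$-tilting finite). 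None of the machinery in your proposal --- the orbifold analogue of Theorem \ref{JSW} (which, incidentally, is not established in this paper), band-type objects on curves with $\omega_\eta(\gamma)=0$, or infinite mutation chains trapped in $[P,P[1]]$ --- bears on this statement, which holds in complete generality for abstract triangulated categories.
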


\subsection{Graded gentle algebras and their surface models}

We briefly recall some results on gentle algebras and marked surfaces. We refer to \cite{HKK17,OPS18,APS19,O19, LP20} for more details and examples.

We begin by recalling the definition of a graded gentle algebra.

\begin{Def}\label{definition:gentle algebras}
A \emph{graded gentle algebra} $A$ is the bound quiver algebra $\Bbbk Q/\langle I\rangle$, where $Q=(Q_0,Q_1, s, t)$ is a finite quiver and $\langle I\rangle$ is an ideal of $\Bbbk Q$, such that 
\begin{enumerate}
 \item Each vertex in $Q_0$ is the source of at most two arrows and the target of at most two arrows.

 \item For each arrow $\za$ in $Q_1$, there is at most one arrow $\zb$ such that $0 \neq \za\zb\in \langle I\rangle$; at most one arrow $\zg$ such that  $0 \neq \zg\za\in \langle I\rangle$; at most one arrow $\zb'$ such that $\za\zb'\notin \langle I\rangle$; at most one arrow $\zg'$ such that $\zg'\za\notin \langle I\rangle$.

 \item $\langle I\rangle$ is generated by paths of length two.

 \item The grading of $A$ is induced by associating to  each arrow $\alpha$  an integer $| \alpha |$.

\end{enumerate}
\end{Def}
For brevity, we call a graded gentle algebra with trivial grading a \emph{gentle algebra}.
Throughout, we will view a graded gentle algebra as a differential graded algebra with a trivial differential. Furthermore, in this paper, we only consider graded gentle algebras that are proper and homologically smooth, or equivalently, there are no cycles in $Q$ without relations, nor any cycles in $Q$ with full relations.

We now recall how to construct a graded gentle algebra from a graded marked surface.

\begin{Def}\label{definition: marked surface}
A pair~$(S,M)$ is called a \emph{marked surface} if
  \begin{enumerate}
 \item $S$ is an oriented surface with boundary with
  connected components $\partial S=\sqcup_{i=1}^{b}\partial_i $;
 \item $M = M^{\bpoint} \cup M^{\rpoint}$ is a finite set of \emph{marked points} on $S$.
       The elements of~$M^{\bpoint}$ and~$M^{\rpoint}$ are marked points on $\partial S$, which will be respectively represented by symbols~$\bpoint$ and~$\rpoint$. Each connected component $\partial_i $ is required to contain at least one marked point of each colour, where in general the points~$\bpoint$ and~$\rpoint$ are alternating on $\partial_i $.
  \end{enumerate}
\end{Def}

\begin{Def}\label{definition:arcs}
Let $(S,M)$ be a marked surface.
An \emph{$\bpoint$-arc} is a non-contractible curve, with endpoints in~$M^{\bpoint}$, while an \emph{$\rpoint$-arc} is a non-contractible curve, with endpoints in $M^{\rpoint}$.
\end{Def}

On the surface, all curves are considered up to homotopy, and all intersections of curves are required to be transversal. For brevity, in the following, we will denote a marked surface $(S,M)$ by $S$.

\begin{Def}\label{definition: dissection}
Let $S$ be a marked surface. 
\begin{enumerate}
 \item An \emph{admissible $\bpoint$-dissection} $\zD$ of $S$ is a collection of pairwise non-intersecting (in the interior of $S$) and pairwise distinct $\bpoint$-arcs  on the surface such that the arcs in $\zD$ cut the surface into polygons, each of which contains exactly one $\rpoint$-point.
 \item Let $q$ be a common endpoint of two arcs $\ell_i$ and $\ell_j$ in an admissible $\bpoint$-dissection $\zD$, an \emph{oriented intersection} from $\ell_i$ to $\ell_j$ at $q$ is an clockwise  angle locally from $\ell_i$ to $\ell_j$ based at $q$ such that the angle is in the interior of the surface. We call an oriented intersection a \emph{minimal oriented intersection} if it is not a composition of two oriented intersections of arcs in $\zD$.
 \item A \emph{graded admissible $\bpoint$-dissection} is a pair $(\zD,G)$, where $\zD$ is an admissible $\bpoint$-dissection and $G$ is a set of gradings, which is given by a set of  integers $g(\za)\in \mathbb{Z}$, one for every minimal oriented intersection $\za$.
 \item We call $(S,\zD, G)$ a \emph{graded marked surface}.
  \end{enumerate}
We similarly define \emph{admissible $\rpoint$-dissections} and \emph{graded admissible $\rpoint$-dissections}. \end{Def}

We introduce the following proposition-definition, whose proof is straightforward.

\begin{Prop-Def}\label{definition: dual graded dissection}
Let $(S,\zD,G)$ be a graded marked surface.
\begin{enumerate}
 \item
There is a unique (up to homotopy) admissible $\rpoint$-dissection $\zD^*$ on $S$ such that
 each arc $\ell^*$ in $\zD^*$ intersects exactly one arc $\ell$ of $\zD$, which we will call the \emph{dual $\rpoint$-dissection}
 of $\zD$.
  \item
  For any minimal oriented intersection $\za$ of $\zD$ from $\ell_i$ to $\ell_j$, there is a unique minimal oriented intersection $\za^{op}$ of $\zD^*$ from $\ell_j^*$ to $\ell_i^*$, see Figure \ref{figure:graded dual algebra}. Conversely, any minimal oriented intersection of $\zD^*$ arises from this way. We call $\za^{op}$ the \emph{dual minimal oriented intersection} of $\za$.

\item
The  \emph{ dual graded  $\rpoint$-dissection} $(\zD^*,G^*)$   of $(\zD,G)$ is the dual $\rpoint$-dissection $\zD^*$ of $\zD$ with dual of grading $G^*$ defined by setting $g(\za^{op}) = 1-g(\za)$, for any minimal oriented intersection $\za$ and its dual $\za^{op}$.

\end{enumerate}
\noindent We similarly define the dual graded $\bpoint$-dissection of an admissible graded $\rpoint$-dissection.
\end{Prop-Def}

Note that the dual graded dissection is unique (up to homotopy) and it is an involution, that is $(\zD^{**},G^{**})=(\zD,G)$.

\begin{figure}
\[\scalebox{1}{
\begin{tikzpicture}[>=stealth,scale=1]

% 顶部红点 (q*) 保持不变
\draw[red,thick,fill=red] (2,4) circle (0.1);

% [修改] 底部点 (q): 白色填充 -> 蓝色填充，边框变蓝
\draw[blue,thick,fill=blue] (2,0) circle (0.1);

% [修改] 绿线 -> 蓝线
\draw [thick,blue] (-1.3,3.8)--(2,0);
\draw [thick,blue](5.3,3.8)--(2,0);

% 红线保持不变
\draw [thick,red] (-1.3,.2)--(2,4);
\draw [thick,red](5.3,.2)--(2,4);

% [修改] alpha 箭头方向反转 (-> 改为 <-)
\draw [thick,bend right,<-] (2.3,.35)to(1.7,.35);
\node at (2,.8) {$\za$};

% [修改] alpha^op 箭头方向反转 (<- 改为 ->)
\draw [thick,bend left,->] (2.3,3.65)to(1.7,3.65);
\node at (2,3.2) {$\za^{op}$};

% 标签
\node at (1,1.7) {$\ell_j$};
\node at (3,1.7) {$\ell_i$};
\node at (1,2.3) {$\ell^*_j$};
\node at (3,2.3) {$\ell^*_i$};
\node at (2,-.4) {$q$};
\node at (2,4.4) {$q^*$};

\end{tikzpicture}
}\]
\begin{center}
\caption{Dual of minimal oriented intersections $\za$ and $\za^{op}$. We set $g(\za^{op}) = 1-g(\za)$ for the gradings.}\label{figure:graded dual algebra}
\end{center}
\end{figure}
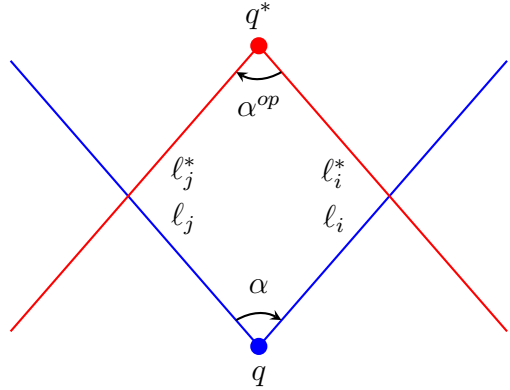
%\end{Ex}

For ease of notation, we will often omit the grading $G$ of $(\zD, G)$ and simply write $\zD$ for a graded surface dissection.

We now recall how to associate a graded gentle algebra to a graded admissible dissection. For that,
let $(\zD, G)$ be a graded admissible $\bpoint$-dissection on a marked surface $S$. Define the graded algebra $A(\zD)=\Bbbk Q(\zD)/I(\zD)$ by setting:

\begin{enumerate}
  \item The vertices of $Q(\zD)$ are given by the arcs in $\zD$.
  \item Each minimal oriented intersection $\za$ from $\ell_i$ to $\ell_j$ gives rise to an arrow from $\ell_i$ to $\ell_j$, which is still denoted by $\za$. Furthermore, $|\alpha| = g(\za)$.
  \item  The ideal $I(\zD)$ is generated by the following relations: whenever $\ell_i$ and $\ell_j$ intersect at a marked point which gives rise to an arrow $\za:\ell_i\rightarrow\ell_j$, and the other end of $\ell_j$ intersects $\ell_k$ at a marked point which gives rise to an arrow $\zb:\ell_j\rightarrow\ell_k$, then the composition $\za\zb$ is a relation.
\end{enumerate}

Then $A(\zD)$ is a graded gentle algebra. In particular, this establishes a bijection between the set of homeomorphism classes of marked surfaces with graded admissible $\bpoint$-dissections and the set of isomorphism classes of graded gentle algebras \cite{BC21, OPS18,PPP18}. Similarly, we construct a graded gentle algebra $A(\zD^*)=\Bbbk Q(\zD^*)/I(\zD^*)$ from a graded admissible $\rpoint$-dissection $\zD^*$.

%%%%%%%%%%%%%%%%%%%%%%%%%%%%%%%%%%%%%%%%%%%%%%%%%%%
In \cite{HKK17}, a graded marked surface is a pair $(S, \eta)$, where $\eta$ is a line field on $S$ with the marked points as the stops. The line field induces a grading on each arc in an admissible dissection $\zD$ (and $\zD^*$), as well as a grading on each intersection of the arcs in $\zD$ (and $\zD^*$).
These two ways to define a grading on a surface are, in fact, equivalent. Indeed, it is shown in \cite{LP20} that a triple $(S, \zD, G)$ induces a line field $\eta$ on $S$ and thus gives rise to a graded marked surface $(S, \eta)$, where $S$ is unique up to diffeomorphism and the line field $\eta$ is unique up to homotopy. In the following, we will freely use $(S,\Delta,G)$ and $(S,\eta,\Delta)$, both denoting a graded marked surface with a dissection.

The winding number $\omega_\eta(\ell)$ of a closed immersed curve $\ell$ with respect to $\eta$ quantifies the ``twisting" of the curve relative to $\eta$, capturing a key topological relationship between the curve and the line field. For a formal definition of this invariant, we refer to \cite[Definition 1.3]{LP20}, where it is defined via the pairing of homology and cohomology classes associated with the line field and the tangent lift of the curve. Notably, the authors also introduce a combinatorial winding number, which is equivalent to the topological winding number defined above. This combinatorial version is detailed in the context of combinatorial boundary components, see \cite[Section 3.1]{LP20}, providing a more accessible computational framework. 

For each boundary component $\partial_i$, denote by $\omega_\eta(\partial_i)$ the \emph{winding number} $\partial_i$ with respect to the line field $\eta$. The following Poincar\'e-Hopf index formula will be used frequently in the paper:
\begin{equation}\label{eq:PH}
\sum_{1=1}^{b}\omega_\eta(\partial_i)=4-4g-2b,
\end{equation}
where $g$ is the genus of the $S$ and $b$ is the number of the boundary components of $S$.

\subsection{Skew-gentle algebras and their module categories}\label{section:skew-gentle}
We recall here the definition of skew-gentle algebras, which were introduced in the 1990's in \cite{GeissdelaPena}. This class of algebras had also been studied under the name clannish algebras in \cite{CB-skew-gentle, CB-skew-gentle-2, Deng00} and \cite{Geiss-clans}.

\begin{Def}\label{definition:skew-gentle algebras}
An algebra $A=\Bbbk Q/\langle I\rangle$ is said to be a \emph{skew-gentle algebra} if $Q=(Q_0,Q_1,s,t)$ is a finite quiver such that
\begin{itemize}
\item $Q_1=Q_1'\cup \mathsf{S}$ and $\mathsf{S}:=\{\epsilon_i \mid s(\epsilon_i)=t(\epsilon_i)=i\in Q_0\}$;

\item $I=I' \cup \{\epsilon_i^2-\epsilon_i\mid \epsilon_i\in \mathsf{S}\}$, where $I'$ is a set of paths containing no elements in $\mathsf{S}$;

\item $A':=\Bbbk Q'/\langle I'\rangle$ is a gentle algebra, where $Q':=(Q_0, Q_1')$;

\item if there exists an $\epsilon_i\in \mathsf{S}$, then it is the unique loop in $\mathsf{S}$ on vertex $i\in Q_0$ and there is at most one arrow in $Q_1'$ ending (resp., starting) at $i$; moreover, we have $\alpha\beta\in I'$ if both arrows $\alpha$ and $\beta$ at the source (and target) of $\epsilon_i \in \mathsf{S}$
\begin{center}
$\xymatrix@C=1cm{\ar[r]^-\alpha&i\ar@(ul,ur)^-{\epsilon_i}\ar[r]^-\beta &}$ 
\end{center}
exist. 
\end{itemize}
\end{Def}

Note that there is a special case of Definition~\ref{definition:skew-gentle algebras}, namely when in the last item $\alpha=\beta$  is a loop. Then we obtain the local algebra $\Bbbk \langle \epsilon, \alpha \mid \epsilon^2=\epsilon, \alpha^2=0 \rangle$ studied in \cite{CB-skew-gentle}.

Elements in $\mathsf{S}$ are called \emph{special loops}, and a vertex $i\in Q_0$ is said to be \emph{special} if there is a special loop $\epsilon_i$ incident to $i$. A gentle algebra can be regarded as a skew-gentle algebra without special loops. We mention that in the above presentation $A=\Bbbk Q/\langle I \rangle$ of a skew-gentle algebra, the ideal $\langle I \rangle$ is not admissible except when $A$ is a gentle algebra.

In the following, we recall the constructions of the indecomposable modules over a skew-gentle algebra in terms of strings and bands as initiated in \cite{CB-skew-gentle} (for gentle algebras see \cite{AS-tilting-cotilting-equ, BR87, WW85}) and further developed in \cite{CB-skew-gentle-2}, see also  \cite{B91,Deng00,Hansper-thesis}. Here we mostly  follow the presentation in  \cite{Hansper-thesis}, with some reformulations and adjustments  to meet our purposes.

Let $A=\Bbbk Q/\langle I\rangle$ be a skew-gentle algebra. For each arrow $\alpha\in Q'_1$, we denote by $\alpha^{-1}$ its formal inverse with $s(\alpha^{-1})=t(\alpha)$ and $t(\alpha^{-1})=s(\alpha)$. Furthermore, we set $(\alpha^{-1})^{-1}=\alpha$ such that $(-)^{-1}$ gives an involution on the set of arrows in $Q'_1$ and their formal inverses. For each special loop $\epsilon \in\mathsf{S}$, we set $\epsilon^{-1}:=\epsilon$; in particular, we usually write $\epsilon^\ast:=\epsilon=\epsilon^{-1}$ to indicate $\epsilon \in\mathsf{S}$. We then obtain a set of letters:
$$
\Gamma(A):=\{\alpha, \alpha^{-1}, \epsilon^\ast \mid \alpha\in Q_1', \epsilon\in \mathsf{S}\}.
$$
A finite sequence $w:=w_1w_2\cdots w_n$ with $w_i\in \Gamma(A)$ is called a \emph{word  of length $n\ge 1$} if it satisfies 
\begin{enumerate}
\item $t(w_i)=s(w_{i+1})$, for any two consecutive letters $w_i, w_{i+1}$;
\item $w_i^{-1}\neq w_{i+1}$, for any two consecutive letters $w_i, w_{i+1}$;
\item $\epsilon^\ast\epsilon^\ast$ does not occur as a subsequence of $w$, for any $\epsilon\in \mathsf{S}$;
\item any subsequence $u=u_1u_2\cdots u_k$ of $w$ is not in $I'$, and neither is its inverse $u^{-1}=u_k^{-1}\cdots u_2^{-1}u_1^{-1}$.
\end{enumerate}
We call a subsequence of a word $w$ a \emph{subword} of $w$. For each vertex $i\in Q_0$, there is a \emph{word of length $0$}, denoted by $1_i$, that starts and ends at $i$. We denote by $s(w):=s(w_1)$ and $t(w):=t(w_n)$ the source and target of a word $w$, respectively. A word $w$ is called an \emph{ordinary string} if there is no $\epsilon^\ast$ appearing in $w$, and it is called \emph{coadmissible} if neither $\epsilon^\ast w$ nor $w\epsilon^\ast$ is a word, for any special loop $\epsilon\in \mathsf{S}$.

\begin{remark}
Let $w=w_1w_2\cdots w_n$ be a word. For any $\epsilon\in \mathsf{S}$, $\epsilon^\ast w$ (resp., $w\epsilon^\ast$) is not a word if and only if either $w_1$ (resp., $w_n$) is a special loop, or $w_1$ (resp., $w_n$) is not a special loop and there is no special loop incident to $s(w_1)$ (resp., $t(w_n)$). 
\end{remark}

\begin{definition}
Let $w$ be a word. 
\begin{enumerate}
    \item It is called an asymmetric string if $w$ is coadmissible and $w\neq w^{-1}$.
    \item It is called a symmetric string if $w$ is coadmissible and $w=u\epsilon^\ast u^{-1}$, where $u\neq u^{-1}$ and $u\neq (v\epsilon^\ast v^{-1}\eta^\ast)^k v$, for any subword $v$ with $k\ge 1$, $s(v)=t(\eta)$, $\eta\in \mathsf{S}$.
\end{enumerate}
We sometimes refer to asymmetric and symmetric strings collectively as strings.
\end{definition}

\begin{remark}
Suppose $A$ is a gentle algebra. Then, each word $w=w_1w_2\cdots w_n$ is an ordinary string; this is called \emph{string} in most literature. Since if $w=w^{-1}$ and $n=2k$ for some $k\in \N$, we obtain $w_k=w_{k+1}^{-1}$ which contradicts to the definition of word; if $w=w^{-1}$ and $n=2k+1$ for some $k\in \N$, we obtain $w_{k+1}=w_{k+1}^{-1}=\epsilon^\ast$ which contradicts to the definition of ordinary string. We conclude that in this case, each ordinary string is an asymmetric string. 
\end{remark}

\begin{example}\label{ex::string}
Let $A=\Bbbk Q/I$ be the skew-gentle algebra given by 
$$
Q: \quad \vcenter{\xymatrix@C=1cm@R=0.5cm{
1\ar@(ld,lu)^{\epsilon}\ar[dr]^{\alpha}&&4\ar@(rd,ru)_{\eta}\ar[dd]^-{\sigma}\\
&3\ar[ur]^-{\nu}\ar[dr]_-{\beta}&\\
2\ar@(lu,ld)_{\gamma}\ar[ur]_{\mu}&&5
}}
$$
and $I=\langle \alpha\beta, \mu\nu, \nu\sigma, \gamma^2,\epsilon^2-\epsilon, \eta^2-\eta\rangle$. Then, $\gamma\mu\beta\sigma^{-1}$ is not coadmissible while $\gamma\mu\beta\sigma^{-1}\eta^\ast$ is coadmissible. We find that $\gamma\mu\beta\sigma^{-1}\eta^\ast$ is an asymmetric string and $\gamma\mu\beta\sigma^{-1}\eta^\ast\sigma \beta^{-1}\mu^{-1}\gamma^{-1}$ is a symmetric string. Besides, $\epsilon^\ast \alpha\nu\eta^\ast\sigma \beta^{-1}$ is also an asymmetric string.
\end{example}

We call an infinite sequence $w_{\mathbb{Z}}:=\cdots w_{-1}w_{0} \mid w_1w_2w_3\cdots$ with $w_i\in \Gamma(A)$ a \emph{$\mathbb{Z}$-word} if each finite subsequence of $w_{\mathbb{Z}}$ is a word. The symbol $|$ indicates the position of $0$-th letter. A finite subsequence of a $\mathbb{Z}$-word $w_{\mathbb{Z}}$ is called a subword of $w_{\mathbb{Z}}$.

Given a $\mathbb{Z}$-word $w_{\mathbb{Z}}$, we call $w_{\mathbb{Z}}[k]$ (or $w_{\mathbb{Z}}[-k]$) the shift of $w_{\mathbb{Z}}$ obtained by moving the symbol $|$ to the right (or left) by $k$ steps for $k\in \mathbb{Z}_{>0}$. If there is a minimal $p\in \mathbb{Z}_{>0}$ with the property $w_{\mathbb{Z}}=w_{\mathbb{Z}}[p]$, we call $p$ the period of $w_{\mathbb{Z}}$, and rewrite $w_{\mathbb{Z}}=\cdots w_1w_2\cdots w_p\mid w_1w_2\cdots w_p\cdots$. Moreover, the subword $\widehat{w}:=w_1w_2\cdots w_p$ is called the \emph{periodic part of $w_{\mathbb{Z}}$}. It is easy to check that the shift of $w_{\mathbb{Z}}$ is additive, that is, $w_{\mathbb{Z}}[m+n]=(w_{\mathbb{Z}}[m])[n]$. We also have $(w_{\mathbb{Z}}[k])^{-1}=w_{\mathbb{Z}}^{-1}[-k]$, for any $k\in \mathbb{Z}$.

\begin{definition}
Let $w_{\mathbb{Z}}$ be a $\mathbb{Z}$-word.
\begin{enumerate}
    \item It is said to be an asymmetric band if $w_{\mathbb{Z}}=w_{\mathbb{Z}}[p]$ for some $p>0$, and $w_{\mathbb{Z}}\neq w_{\mathbb{Z}}^{-1}[k]$ for any $k\in \mathbb{Z}$.
    \item It is said to be a symmetric band if $w_{\mathbb{Z}}=w_{\mathbb{Z}}[p]$ for some $p>0$, and $w_{\mathbb{Z}}= w_{\mathbb{Z}}^{-1}[k]$ for some $k\in \mathbb{Z}$.
\end{enumerate}
We sometimes refer to asymmetric and symmetric bands collectively as bands.
\end{definition}

\begin{remark}
Let $A$ be a gentle algebra. In the literature, a \emph{band} is an asymmetric string $w$ of length $\ge 1$ such that $s(w)=t(w)$, all powers of $w$ are asymmetric strings, and $w$ is not itself a power of a strictly smaller asymmetric string. We then conclude that in this case, a band $w$ is exactly the periodic part of some asymmetric band $w_\mathbb{Z}$.
\end{remark}

\begin{example}
Following Example \ref{ex::string}, $\cdots\gamma\mu\alpha^{-1}\epsilon^\ast \alpha \mu^{-1}\mid \gamma\mu\alpha^{-1}\epsilon^\ast \alpha \mu^{-1}\cdots$ is an asymmetric band, and $\cdots\epsilon^\ast \alpha\nu\eta^\ast\nu^{-1} \alpha^{-1}\mid \epsilon^\ast \alpha\nu\eta^\ast\nu^{-1} \alpha^{-1}\cdots$ is a symmetric band.
\end{example}

\begin{proposition}[{\cite[Lemma 3.36]{Hansper-thesis}}]\label{prop::H-symmetric-band}
Let $w_{\mathbb{Z}}$ be a symmetric band. Then, the periodic part $\widehat{w}$ of $w_{\mathbb{Z}}$ is of the form $\epsilon^\ast u \eta^\ast u^{-1}$, for a suitable non-trivial word $u$ and $\epsilon,\eta\in \mathsf{S}$.
\end{proposition}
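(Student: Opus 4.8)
The plan is to pass to coordinates and read the two defining conditions of a symmetric band as a translation symmetry and a reflection symmetry of the bi-infinite sequence, and then analyse the induced reflection on the cyclic word of length $p$. Write $w_{\mathbb{Z}}=(w_i)_{i\in\mathbb{Z}}$ with the conventions $(w_{\mathbb{Z}}[k])_i=w_{i+k}$ and $(w_{\mathbb{Z}}^{-1})_i=w_{1-i}^{-1}$. The periodicity $w_{\mathbb{Z}}=w_{\mathbb{Z}}[p]$ becomes $w_i=w_{i+p}$, so $w_{\mathbb{Z}}$ descends to a cyclic word on $\mathbb{Z}/p\mathbb{Z}$, with $\widehat{w}$ a chosen representative. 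The symmetry $w_{\mathbb{Z}}=w_{\mathbb{Z}}^{-1}[k]$ becomes $w_i=w_{c-i}^{-1}$ for all $i$, where $c:=1-k$; modulo $p$ this is a reflection $\rho(i)=c-i$ of the cyclic word satisfying $w_{\rho(i)}=w_i^{-1}$. First I would record the elementary fact that the involution $(-)^{-1}$ on $\Gamma(A)$ fixes exactly the symbols $\epsilon^\ast$ with $\epsilon\in\mathsf{S}$, since for $\alpha\in Q_1'$ the symbols $\alpha$ and $\alpha^{-1}$ are distinct. Consequently, at any bead fixed by $\rho$, that is, any $i$ with $2i\equiv c\pmod p$, we get $w_i=w_i^{-1}$, which forces $w_i$ to be a special loop.

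The core of the argument is a short parity analysis governed by two word axioms: condition $(2)$, forbidding $w_{i+1}=w_i^{-1}$, and condition $(3)$, forbidding $\epsilon^\ast\epsilon^\ast$. The key observation is that $\rho$ cannot pair two cyclically adjacent beads, because $\rho(i)=i+1$ would give $w_{i+1}=w_{\rho(i)}=w_i^{-1}$, violating $(2)$. Using this I rule out the unwanted parities. If $p$ is odd, then $\rho$ has a unique fixed bead $i_0$, and taking $i=i_0+\tfrac{p-1}{2}$ yields $\rho(i)\equiv i+1\pmod p$, the forbidden adjacent pairing; hence $p$ is even. If $p$ is even but $c$ is odd, then $2i\equiv c\pmod p$ has no solution, so $\rho$ has no fixed bead, while $2i\equiv c-1\pmod p$ is solvable and again produces $\rho(i)=i+1$; hence $c$ is even. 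Therefore $p=2m$ and $c$ are both even, and $2i\equiv c\pmod p$ has exactly two solutions, giving two fixed beads at cyclic distance $m$, both special loops by the previous paragraph.

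It then remains to read off the form. After replacing $w_{\mathbb{Z}}$ by a shift (which only rotates $\widehat{w}$ cyclically and changes neither $p$ nor the cyclic word), I may assume $w_1$ is a fixed bead, so $w_1=\epsilon^\ast$ and $c\equiv 2\pmod p$; the second fixed bead is then $w_{m+1}=\eta^\ast$. Setting $u:=w_2\cdots w_m$, the reflection identity $w_{m+1+s}=w_{m+1-s}^{-1}$ for $1\le s\le m-1$ yields $w_{m+2}\cdots w_p=u^{-1}$, whence $\widehat{w}=\epsilon^\ast u\,\eta^\ast u^{-1}$. Finally, $u$ is non-trivial: if $u$ were empty then $p=2$ and $\epsilon^\ast\eta^\ast$ would be a subword, forcing $\epsilon^\ast$ and $\eta^\ast$ to be incident to a common vertex; since each vertex carries at most one special loop, $\epsilon=\eta$, and $\epsilon^\ast\epsilon^\ast$ violates $(3)$. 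I expect the main obstacle to be bookkeeping rather than conceptual: fixing the shift/inverse conventions consistently and treating the small-period boundary cases carefully, since once the reflection $\rho$ is identified, the special-loop and adjacency constraints pin down the form almost mechanically.
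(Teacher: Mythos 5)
Your argument is correct. Note that the paper itself gives no proof of this statement: it is quoted verbatim as Lemma~3.36 of Hansper's thesis, so there is nothing in the paper to compare against line by line. Your proposal therefore supplies a self-contained verification, and it holds up: the conventions $(w_{\mathbb{Z}}[k])_i=w_{i+k}$ and $(w_{\mathbb{Z}}^{-1})_i=w_{1-i}^{-1}$ are consistent with the paper's identity $(w_{\mathbb{Z}}[k])^{-1}=w_{\mathbb{Z}}^{-1}[-k]$; the observation that the involution on $\Gamma(A)$ fixes exactly the symbols $\epsilon^\ast$ is immediate from the formal-inverse construction; the exclusion of an adjacent pairing $\rho(i)=i+1$ via word axiom (2) is exactly what forces $2i\equiv c-1\pmod p$ to be unsolvable, which is equivalent to $p$ and $c$ both being even; and the resulting two fixed beads at distance $m=p/2$ give the palindromic form $\epsilon^\ast u\,\eta^\ast u^{-1}$ with $u=w_2\cdots w_m$. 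The non-triviality of $u$ correctly uses the uniqueness of the special loop at a vertex (Definition of skew-gentle algebras) together with axiom (3) to exclude $p=2$. The only caveat worth recording explicitly is the one you already flag implicitly: the conclusion is about $\widehat{w}$ up to cyclic rotation (equivalently, after replacing $w_{\mathbb{Z}}$ by a shift), which is the intended reading since bands are considered up to shift; and it is worth stating once that every finite subsequence of $w_{\mathbb{Z}}$, in particular $u$ and the two-letter subwords used in the adjacency argument, inherits the word axioms by definition of a $\mathbb{Z}$-word.
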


Let $w$ be a word or $\mathbb{Z}$-word. One may assign a direction to each $\epsilon^\ast$ appearing in $w$, and then obtain a directed version $ \overrightarrow{w}$ of $w$. We then put a $\Bbbk$-vector space on each vertex along the path read from $\overrightarrow{w}$ with an action of $A$, such that it gives an $A$-module $M(\overrightarrow{w})$.
We refer to the literature, for example, \cite{CB-skew-gentle, CB-skew-gentle-2,B91,Deng00,Hansper-thesis} for the specific construction of $M(\overrightarrow{w})$ and also for the following result.

\begin{theorem}\label{theo::classification}
Let $A$ be a skew-gentle algebra. Then, the set of modules $M(\overrightarrow{w})$ with $w$ running through all possible strings and bands, gives a complete set of pairwise non-isomorphic indecomposable $A$-modules.
\end{theorem}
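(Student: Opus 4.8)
The plan is to establish the three components common to every such classification: (i) each $M(\overrightarrow{w})$ is indecomposable, (ii) distinct combinatorial data yield non-isomorphic modules, and (iii) every indecomposable $A$-module arises in this way (exhaustiveness). Components (i) and (ii) are comparatively routine, while (iii) carries essentially all the difficulty, so I would spend most of the effort there.

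For (i) and (ii) I would first make the construction of $M(\overrightarrow{w})$ fully explicit: reading $\overrightarrow{w}$ produces a representation of $Q$ with one-dimensional spaces at the vertices visited by the underlying word, where ordinary arrows act by the identity/zero pattern dictated by the walk, and each directed special loop $\overrightarrow{\epsilon^{\ast}}$ acts as the relevant idempotent on the corresponding two-dimensional local piece. Indecomposability then follows from an analysis of $\End_A(M(\overrightarrow{w}))$: for strings I would use the standard graph-map description of homomorphisms between string modules to show this endomorphism algebra is local, checking that its only idempotents are $0$ and $1$; the symmetric case needs the extra observation that the forced shape $w=u\epsilon^{\ast}u^{-1}$ prevents a nontrivial splitting coming from the two eigenspaces of $\epsilon$. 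For band modules one reduces, as usual, to indecomposable $\Bbbk[x,x^{-1}]$-modules (a single Jordan block), again with an adjustment in the symmetric case dictated by the form $\widehat{w}=\epsilon^{\ast}u\eta^{\ast}u^{-1}$ of Proposition~\ref{prop::H-symmetric-band}. Non-isomorphism (ii) is read off from the same graph-map description together with the normalizations built into the definitions (coadmissibility, the conditions $w\neq w^{-1}$ and $w_{\mathbb{Z}}\neq w_{\mathbb{Z}}^{-1}[k]$, and the shift/inversion symmetries), so that the combinatorial invariants recover $\overrightarrow{w}$ up to exactly the allowed equivalences.

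For the exhaustiveness (iii) I see two viable routes. The first is the functorial filtration method of Gelfand--Ponomarev adapted to clannish algebras in \cite{CB-skew-gentle,CB-skew-gentle-2}: one defines, for each letter, subfunctors of the identity on $\mod A$ cut out by the combinatorics of admissible words, proves these filtrations are exhaustive and mutually compatible, and deduces that every indecomposable is pinned down by a single string or band. The second route is to pass to an associated gentle algebra $\widehat{A}$, obtained by splitting each special vertex $i$ into two vertices (the supports of the orthogonal idempotents $\epsilon_i$ and $1-\epsilon_i$) and doubling the incident arrows; $\widehat{A}$ carries a $\mathbb{Z}/2$-symmetry exchanging the two copies, and $A$ is recovered from $\widehat{A}$ together with this action. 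One then transports the known string--band classification for the gentle (hence string) algebra $\widehat{A}$ through the associated push-down functor: free $\mathbb{Z}/2$-orbits of $\widehat{A}$-strings and bands descend to the asymmetric strings and bands of $A$, while $\mathbb{Z}/2$-fixed objects descend to the symmetric ones.

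The hard part will be (iii), and within it the treatment of the special loops. The difficulty is precisely that $\epsilon^{2}=\epsilon$ is idempotent rather than nilpotent, so a special vertex supports a genuine two-dimensional local representation that can decompose in two inequivalent ways; this is the source of the symmetric strings and bands and has no analogue in the string-algebra theory. In the functorial-filtration route this forces careful bookkeeping of the two ``signs'' at each $\epsilon^{\ast}$ together with an argument (again via Proposition~\ref{prop::H-symmetric-band}) that no further indecomposables hide in the fixed locus; in the covering route it is exactly the analysis of $\mathbb{Z}/2$-fixed $\widehat{A}$-modules, where one must verify that each self-dual fixed string contributes the correct symmetric $A$-module and that nothing is over- or under-counted. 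Since this delicate verification is already carried out in full in \cite{CB-skew-gentle-2} and \cite{Hansper-thesis}, I would, after reducing the remaining claims to the explicit endomorphism-ring computations described above, ultimately invoke those references for the complete proof of step (iii).
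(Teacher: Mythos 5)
Your proposal is correct and consistent with the paper, which in fact offers no proof of this theorem at all: it is quoted directly from the literature, with the reader referred to \cite{CB-skew-gentle, CB-skew-gentle-2, B91, Deng00, Hansper-thesis} both for the construction of $M(\overrightarrow{w})$ and for the classification itself. Since your sketch of the functorial-filtration and $\mathbb{Z}/2$-covering routes ultimately defers to the same references for the essential step, you are taking the same approach as the paper.
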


\section{$\tau$-tilting finiteness of  skew-gentle algebras}

It is known from \cite{P19} that a gentle algebra is $\tau$-tilting finite if and only if it is representation-finite. In this section, we are aiming to establish a similar result for skew-gentle algebras.
We first introduce the definition of \emph{minimal} bands.

\begin{definition}
\label{prop::minimal-asymmetric-band}
Let $w_{\mathbb{Z}}$ be an asymmetric band for a skew-gentle algebra $A$. We say that $w_{\mathbb{Z}}$ is \emph{minimal} if the periodic part $\widehat{w}$ of $w_{\mathbb{Z}}$ is one of the following form:
\begin{enumerate}
\item $\widehat{w}$ does not go through the same vertex twice except for the source and target of $\widehat{w}$, i.e., $\widehat{w}$ is read from a quiver of type $\widetilde{\mathbb{A}}$.

\item $\widehat{w}$ has the form $\widehat{w}=uzu'z^{-1}$, where
\begin{itemize}
    \item $u$ and $u'$ are ordinary strings with $s(u)=t(u)$ and $s(u')=t(u')$, $u^2$ and $(u')^2$ are not ordinary strings;
    \item none of $u, u'$ and $z$ go through the same vertex twice except for the sources of $u$ and $u'$;
    \item $z$ is a possibly trivial ordinary string $1_i$ for some vertex $i\in Q_0$, and $i$ is the only vertex that $u, u', z$ may have in common;
\end{itemize}
i.e., $\widehat{w}$ is read from a quiver of the form 
\begin{center}
$\vcenter{\xymatrix@C=0.6cm@R=0.2cm{
&\circ\ar@{-}[r]&\cdots\ar@{-}[r]&\circ\ar[dr]^{\alpha_1}&&&&\circ\ar@{-}[r]&\cdots\ar@{-}[r]&\circ\ar@{-}[dr]&\\
\circ\ar@{-}[ur]\ar@{-}[dr]&&&&\circ\ar@{-}[r]\ar[dl]^{\alpha_2}&\cdots\ar@{-}[r] &\circ\ar[ur]^-{\beta_2}&&&&\circ\\
&\circ\ar@{-}[r]&\cdots\ar@{-}[r]&\circ&&&&\circ\ar[ul]^-{\beta_1}\ar@{-}[r]&\cdots\ar@{-}[r]&\circ\ar@{-}[ur]&
}}$
\end{center}
with $\alpha_1\alpha_2=\beta_1\beta_2=0$.

\item $\widehat{w}$ has the form $\widehat{w}=\epsilon^\ast z u z^{-1}$ or $uz\epsilon^\ast z^{-1}$ for some $\epsilon\in \mathsf{S}$, where
\begin{itemize}
    \item $u$ is an ordinary string with $s(u)=t(u)$, and $u^2$ is not an ordinary string;
    \item none of $u$ and $z$ go through the same vertex twice except for the source of $u$;
    \item $z$ is a possibly trivial ordinary string $1_{s(\epsilon)}$, and $s(\epsilon)$ is the only vertex that $u, \epsilon, z$ may have in common;
\end{itemize}
i.e., $\widehat{w}$ is read from a quiver of the form 
\begin{center}
$\vcenter{\xymatrix@C=0.6cm@R=0.2cm{
&&&&\circ\ar@{-}[r]&\cdots\ar@{-}[r]&\circ\ar@{-}[dr]&\\
\circ\ar@(ld,lu)^{\epsilon}\ar@{-}[r]&\circ\ar@{-}[r]&\cdots\ar@{-}[r] &\circ\ar[ur]^-{\alpha}&&&&\circ\\
&&&&\circ\ar[ul]^-{\beta}\ar@{-}[r]&\cdots\ar@{-}[r]&\circ\ar@{-}[ur]&
}}$
\end{center}
with $\beta\alpha=0$ and $\epsilon\in \mathsf{S}$.
\end{enumerate}
\end{definition}

\begin{definition}\label{pro::minimal-symmetric-band}
Let $A$ be a skew-gentle algebra. A symmetric band $w_{\mathbb{Z}}$ is said to be \emph{minimal} if the periodic part $\widehat{w}$ of $w_{\mathbb{Z}}$ has the form $\epsilon^\ast z \eta^\ast z^{-1}$, where $z$ cannot be a trivial ordinary string, $z$ does not go through the same vertex twice including the source and target of $z$, and $\epsilon,\eta\in \mathsf{S}$. In other words, $w$ is read from a quiver of the form
\begin{center}
$\xymatrix@C=1cm{\circ\ar@(ld,lu)^{\epsilon}\ar@{-}[r]&\circ\ar@{-}[r]&\circ\ar@{-}[r]&\cdots\ar@{-}[r] &\circ\ar@(ru,rd)^{\eta}}$
\end{center}
with $\epsilon,\eta\in \mathsf{S}$.
\end{definition}

We remark that, for a gentle algebra, the minimal bands occur only in types (1) and (2) as described in Definition \ref{prop::minimal-asymmetric-band}. It was shown in \cite{P19} that any gentle algebra admitting a band always admits a minimal band. The following result extends this property to  skew-gentle algebras.

\begin{proposition}\label{prop::minimal-exist}
Let $A$ be a skew-gentle algebra. If $A$ admits a band, then $A$ admits at least one minimal band.
\end{proposition}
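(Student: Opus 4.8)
The plan is to split on whether the underlying gentle algebra $A'=\Bbbk Q'/\langle I'\rangle$, obtained by deleting the special loops, already admits a band. If it does, then the result of \cite{P19} quoted above produces a minimal gentle band, necessarily of the form \ref{prop::minimal-asymmetric-band}(1) or (2), and this is at once a minimal band of $A$. So I may assume $A'$ admits no band. Since an ordinary band of $A$ is exactly a band of $A'$, and since by Proposition \ref{prop::H-symmetric-band} every symmetric band contains a special loop, the given band $w_{\mathbb Z}$ must use at least one special loop $\epsilon^\ast$. The key gain in this case is that every ordinary closed walk $u$ occurring in a band satisfies that $u^2$ is not a string (otherwise its primitive root would be an $A'$-band); this is precisely the blocking condition built into the minimal shapes \ref{prop::minimal-asymmetric-band}(2),(3). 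It remains to pare $w_{\mathbb Z}$ down, by induction on the length of its periodic part $\widehat w$, to a minimal band of the form \ref{prop::minimal-asymmetric-band}(3) or \ref{pro::minimal-symmetric-band}. The basic move is a \emph{surgery}: whenever $\widehat w$ returns to a vertex it has already visited, one cuts it there into two closed sub-walks and promotes one of them (or its primitive root) to a strictly shorter band.

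Suppose first $w_{\mathbb Z}$ is asymmetric and passes through a special loop $\epsilon^\ast$ at the special vertex $i=s(\epsilon)$. By Definition \ref{definition:skew-gentle algebras}, $i$ carries only $\epsilon$ together with at most one further in-arrow and out-arrow, which are in relation; this pins down the two letters flanking $\epsilon^\ast$. I would cut $\widehat w$ at $i$, isolating $\epsilon^\ast$ from the complementary closed walk $u$ at $i$. Neither piece is a band on its own — $\epsilon^\ast$ because $\epsilon^\ast\epsilon^\ast$ is forbidden, and $u$ because $u^2$ is not a string — so $\widehat w$ is irreducible at $i$ exactly when it has the form $\epsilon^\ast z u z^{-1}$. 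Carrying out the analogous surgery at every repeated vertex lying inside $u$ or the bridge $z$ then either strictly shortens the band (the inductive step) or forces $u$ to be a single blocked cycle and $z$ a simple bridge, which is the minimal form \ref{prop::minimal-asymmetric-band}(3).

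Now suppose $w_{\mathbb Z}$ is symmetric. By Proposition \ref{prop::H-symmetric-band} we may write $\widehat w=\epsilon^\ast u\eta^\ast u^{-1}$. If $u$ is a simple ordinary path, then $\widehat w$ is already of the minimal form \ref{pro::minimal-symmetric-band}. If $u$ contains an interior special loop $\zeta^\ast$, say $u=u_1\zeta^\ast u_2$, then $\epsilon^\ast u_1\zeta^\ast u_1^{-1}$ is again a symmetric band — each internal transition and the single seam are inherited from $w_{\mathbb Z}$ and its inverse — and is strictly shorter, so induction applies. If instead $u$ is ordinary but revisits a vertex $v$, the intervening closed sub-walk $c$ at $v$ is a blocked cycle; I would reattach $c$ to $\epsilon^\ast$ through the initial arm of $u$, producing a strictly shorter band of the form \ref{prop::minimal-asymmetric-band}(3) (for instance, when the cycle is based at $s(\epsilon)$ the word $\epsilon^\ast c$ closes up directly), and again invoke induction.

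The main obstacle is the \emph{validity} of these cuts: a surgery is legitimate only if the newly created seam neither backtracks nor lands in $I'$. For symmetric bands this is genuinely restrictive, since the symmetric inner cut $a^{-1}\epsilon^\ast a$ always backtracks at its seam; this is exactly why symmetric bands are rigid and must be reduced through their special loops, as above, rather than by a blind cut. The technical heart is therefore the local analysis at the cut vertex: one uses the gentle pairing of the (at most two) in- and out-arrows together with the skew-gentle prohibition of $\epsilon^\ast\epsilon^\ast$ to select a seam that closes up, and to certify that when no admissible seam exists the walk must already be of one of the minimal shapes of Definitions \ref{prop::minimal-asymmetric-band} and \ref{pro::minimal-symmetric-band}. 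Carrying the special-loop bookkeeping through these surgeries, so that each reduced walk remains a genuine word, is precisely where the skew-gentle argument goes beyond the gentle one of \cite{P19}.
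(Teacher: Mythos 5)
Your proposal is correct and follows essentially the same route as the paper: ordinary bands are handled via \cite{P19}, and a band passing through a special loop is cut at a repeated vertex to produce a minimal band of type (3) of Definition \ref{prop::minimal-asymmetric-band} or of the form in Definition \ref{pro::minimal-symmetric-band}. The paper sidesteps your induction by rotating the periodic part to begin at a special loop $\epsilon^\ast$ and cutting at the \emph{first} repeated vertex, so that the bridge $z$ and the cycle $u$ are automatically simple and the asymmetric and symmetric cases are handled uniformly by whether the closing letter is ordinary or another special loop.
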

\begin{proof}
Let $w$ be a band and $A':=A/\langle \mathsf{S}\rangle$ the gentle quotient of $A$ modulo all special loops. If $\widehat{w}$ contains no $\epsilon^\ast$ for any $\epsilon\in \mathsf{S}$, then $\widehat{w}$ is a band over $A'$. It is shown in \cite{P19} that $\widehat{w}$ gives rise to at least one minimal asymmetric band, which is of either case (1) or case (2) in Definition \ref{prop::minimal-asymmetric-band}. We suppose in the following that $\widehat{w}$ contains at least one special loop $\epsilon^\ast$. 

We rewrite $w$ as $\cdots \epsilon^\ast u_2u_3\cdots u_n\epsilon^\ast u_2u_3\cdots u_n\cdots$, and set $u:=\epsilon^\ast u_2u_3\cdots u_n$, where $n\ge 2$. There is a sequence of vertices
\begin{equation}\label{sequence}
s(\epsilon^\ast),t(\epsilon^\ast), s(u_2), t(u_2), \cdots, s(u_i), t(u_i), \cdots, s(u_n), t(u_n).
\end{equation}
We set $i:=s(\epsilon^\ast)=t(\epsilon^\ast)=s(u_2)=t(u_n)$. If $i$ is the unique vertex appearing repeated in \eqref{sequence}, then $u$ is the periodic part of a minimal band as defined in Definition \ref{prop::minimal-asymmetric-band} (3), in which $z$ is a trivial ordinary string. Otherwise, suppose $j:=s(u_x)=t(u_y)$ with $3\le x\le y \le n-1$ to be another vertex appearing repeated in $u$, such that both $x$ and $y$ are minimal. Then we have
\begin{itemize}
\item If $u_x$ is an arrow or an ordinary loop, then $\epsilon^\ast u_2u_3\cdots u_{x-1}(u_x\cdots u_y) u_{x-1}^{-1} \cdots u_3^{-1}u_2^{-1}$ is the periodic part of a minimal band as defined in case (3) of Definition \ref{prop::minimal-asymmetric-band}. 
\item If $u_x=\eta^\ast=u_y$ for some special loop $\eta\in \mathsf{S}$, then $\epsilon^\ast u_2u_3\cdots u_{x-1}\eta^\ast u_{x-1}^{-1} \cdots u_3^{-1}u_2^{-1}$ is the periodic part of a minimal band as defined in Definition \ref{pro::minimal-symmetric-band}. 
\end{itemize}
We conclude that $u$ induces a minimal band of $A$.
\end{proof}
 
We recall an admissible presentation for a skew-gentle algebra $A=\Bbbk Q/\langle I\rangle$ as follows. As in the proof of the previous proposition, we note that  $A$ contains a gentle algebra $A'=\Bbbk Q'/I'$ as a quotient, obtained by deleting all special loops in $\mathsf{S}$. Set $Q^s=(Q_0^s, Q_1^s)$ with 
\begin{itemize}
\item $Q_0^s:=\bigcup_{i\in Q_0}Q_0^s(i)$, where  
\begin{center}
$\begin{aligned}
Q_0^s(i):= \left\{
\begin{array}{ll}
\{i^+, i^-\} & \text{ if } \exists\ \epsilon_i\in \mathsf{S}\\
\{ i\} & \text{ otherwise }
\end{array}\right.
\end{aligned}$, 
\end{center}

\item $Q_1^s:=\{(i,\alpha, j)\mid \alpha\in Q_1', i\in Q_0^s(s(\alpha)), j\in Q_0^s(t(\alpha))\}$.
\end{itemize}
We define an admissible ideal $I^s$ of $\Bbbk Q^s$ by
\begin{center}
$I^s:=\left < \sum\limits_{j\in Q_0^s(s(\beta))} \lambda_j(i, \alpha, j)(j,\beta, k)\mid \alpha\beta\in I', i\in Q_0^s(s(\alpha)), k\in Q_0^s(t(\beta)) \right >$,
\end{center}
where $\lambda_j=-1$ if $j=l^-$ for some $l\in Q_0$, and $\lambda_j=1$ otherwise. Then, the algebras $A$ and  $A^s:=\Bbbk Q^s/I^s$  are isomorphic. 

We now show that the existence of a minimal band implies that the algebra is  $\tau$-tilting infinite. We begin by recalling that  each path algebra of extended Dynkin type is $\tau$-tilting infinite, see, for example, \cite[Theorem 3.1]{Ada-rad-square-0}.
\begin{lemma}\label{lem::S_1}
Let $\mathbf{S}_1:=\Bbbk Q/I$ be the skew-gentle algebra given by
$$
Q: \quad \vcenter{\xymatrix@C=0.8cm@R=0.3cm{
&&&&\circ\ar@{-}[r]&\cdots\ar@{-}[r]&\circ\ar@{-}[dr]&\\
1\ar@(ld,lu)^{\epsilon}\ar@{-}[r]&2\ar@{-}[r]&\cdots\ar@{-}[r] &n\ar[ur]^-{\alpha}&&&&\circ\\
&&&&\circ\ar[ul]^-{\beta}\ar@{-}[r]&\cdots\ar@{-}[r]&\circ\ar@{-}[ur]&
}}
$$
with $I=\langle \{\beta\alpha\}\cup\{ \epsilon^2-\epsilon\} \rangle$, that is $\mathsf{S}=\{\epsilon\}$. Then, $\mathbf{S}_1$ is $\tau$-tilting infinite.
\end{lemma}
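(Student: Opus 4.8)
The plan is to show that $\mathbf{S}_1$ is brick-infinite; by the brick--$\tau$-rigid correspondence (Proposition~\ref{prop::brick-tau-rigid}) and the ensuing observation that $A$ is $\tau$-tilting finite if and only if it is brick-finite, this is equivalent to $\mathbf{S}_1$ being $\tau$-tilting infinite. The infinitely many bricks will all come from a single band, so the first task is to exhibit one.

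First I would write down the band. Let $z$ be the ordinary string running along the path $1-2-\cdots-n$ from the special vertex $1$ to the branch vertex $n$, and let $u$ be the cyclic ordinary string that goes once around the cycle based at $n$, beginning with $\alpha$ and ending with $\beta$. Although $s(u)=t(u)=n$, the relation $\beta\alpha=0$ makes the subword $\beta\alpha$ appear in $u^2$, so $u^2$ is not a string and $u$ alone is not a band. The role of the special loop is precisely to let the walk close up without meeting this relation: set
\[
\widehat{w}:=\epsilon^\ast\, z\, u\, z^{-1},
\]
which is exactly the periodic part of a minimal asymmetric band of type~(3) in Definition~\ref{prop::minimal-asymmetric-band}. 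I would then verify that $w_{\mathbb Z}:=\cdots\widehat{w}\mid\widehat{w}\cdots$ is a genuine $\mathbb Z$-word. The only place an obstruction could arise upon iterating is at the branch vertex $n$; but between two consecutive copies of $u$ one now reads the detour $z^{-1}\epsilon^\ast z$, so $\beta$ is never immediately followed by $\alpha$ and the forbidden subword $\beta\alpha$ does not occur. The conditions involving $\epsilon^\ast$ (no $\epsilon^\ast\epsilon^\ast$, reducedness, coadmissibility) are immediate because $\epsilon$ sits at a leaf of the path, away from the cycle. Hence $\widehat{w}$ is an asymmetric band and $\mathbf{S}_1$ is representation-infinite.

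Next I would pass to band modules. By Theorem~\ref{theo::classification}, a choice of direction on $\epsilon^\ast$ together with a scalar $\lambda\in\Bbbk^{\times}$ produces an indecomposable rank-one band module $M(\overrightarrow{w},\lambda)$, and for distinct values of $\lambda$ these are pairwise non-isomorphic. The crux is to check that each $M(\overrightarrow{w},\lambda)$ is a brick, and this is the step I expect to be the main obstacle: one must read off $\End_{\mathbf{S}_1}\!\big(M(\overrightarrow{w},\lambda)\big)$ from the explicit representation attached to $\widehat{w}$ and show it reduces to scalars. Concretely, an endomorphism is determined by scalars placed along the walk; since $\widehat{w}$ is primitive (not a proper power, by minimality) and the single free parameter enters only through the $\Bbbk[x,x^{-1}]$-action localized at $\epsilon^\ast$, the compatibility conditions along the walk force all these scalars to agree, so the endomorphism is a scalar. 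Granting this, $\{M(\overrightarrow{w},\lambda)\}_{\lambda\in\Bbbk^{\times}}$ is an infinite family of bricks, whence $\mathbf{S}_1$ is brick-infinite and therefore $\tau$-tilting infinite.

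Conceptually this is the same mechanism that makes a path algebra of extended Dynkin type $\tau$-tilting infinite, recalled just before the lemma: the band $\widehat{w}$ behaves like a minimal imaginary root, and its rank-one band modules form a one-parameter tube of bricks, just as for the rank-one regular modules of a tame hereditary algebra. If one wishes to avoid the direct endomorphism computation, an alternative is to realize this family via a reduction of $\mathbf{S}_1$ (through its admissible presentation) to such a tame hereditary algebra and then invoke the recalled fact; in either route the delicate point is identical, namely controlling the special vertex, where the idempotent $\epsilon$ splits vertex $1$ into two vertices whose two arrows into vertex $2$ compose freely with $z$.
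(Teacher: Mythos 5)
Your overall strategy --- exhibit the minimal asymmetric band $\widehat{w}=\epsilon^\ast z u z^{-1}$ of type (3) of Definition~\ref{prop::minimal-asymmetric-band} and extract from it a one-parameter family of bricks, then conclude via Proposition~\ref{prop::brick-tau-rigid} --- is the same as the paper's, and your identification of the band is correct and covers $n=1$ and $n\ge 2$ uniformly. The paper, however, does not run the argument in the clannish presentation: it passes to the admissible split presentation $\mathbf{S}^s_1$, writes the one-parameter family $M_\lambda$ down as explicit matrix representations, and verifies $M_\lambda\not\simeq M_{\lambda'}$ and $\End_{\mathbf{S}^s_1}(M_\lambda)\simeq\Bbbk$ by direct computation (the case $n=1$ is handled separately by an idempotent truncation to a path algebra of type $\tilde{\mathbb{A}}$).

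The genuine gap sits exactly at the step you flag as the main obstacle, and the justification you offer for it is not valid. You assert that an endomorphism of the rank-one band module ``is determined by scalars placed along the walk'' and that primitivity of $\widehat{w}$ forces these scalars to agree. This only accounts for the diagonal endomorphisms. Endomorphisms of string and band modules also include graph (overlap) maps, which factor through a common subquotient supported on a self-overlap of the walk and are not of diagonal form, and primitivity of the band does not exclude them. For instance, over the string algebra $\Bbbk\langle x,y\rangle/(x^2,y^2,xy,yx)$ the word $xy^{-1}$ is a primitive band, yet each rank-one band module has a two-dimensional endomorphism ring (it admits a nonzero map from its top to its socle) and is not a brick; consistently, this local radical-square-zero algebra is brick-finite although representation-infinite. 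So the implication ``primitive band $\Rightarrow$ rank-one band modules are bricks'' is false in general, and it is genuinely at risk for your $\widehat{w}$, whose walk traverses every vertex of $z$ twice (once in $z$ and once in $z^{-1}$), so that the two passes create exactly the kind of overlaps one must rule out. Ruling them out requires using the specific shape of $\widehat{w}$ and the gentleness relations ($\beta\alpha=0$ and the splitting of the special vertex), which is precisely what the paper's explicit computation of $\End_{\mathbf{S}^s_1}(M_\lambda)$ accomplishes. Your argument is completable, but the brick verification must be carried out rather than deduced from primitivity; the same caveat applies to your proposed shortcut through a tame hereditary algebra, which for $n\ge 2$ is blocked by the surviving relation in the split presentation.
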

\begin{proof}
If $n=1$, then $\mathbf{S}_1$ is isomorphic to the bound quiver algebra $\mathbf{S}^s_1$ given by 
$$
\vcenter{\xymatrix@C=0.9cm@R=0.5cm{
&\circ\ar@{-}[r]\ar@{.}[dd]&\circ\ar@{-}[r]&\cdots\ar@{-}[r]&\circ\ar@{-}[dr]&\\
e_1\ar[ur]^-{\alpha_1}&&e_2\ar[ul]_-{\alpha_2}&&&\circ\\
&\circ\ar[ul]^-{\beta_1}\ar[ur]_-{\beta_2}\ar@{-}[r]&\circ\ar@{-}[r]&\cdots\ar@{-}[r]&\circ\ar@{-}[ur]&
}}
$$
with $\beta_1\alpha_1=\beta_2\alpha_2$. Set $A:=e\mathbf{S}^s_1e$ with $e:=1-e_1-e_2$.  Then, $A$ is  $\tau$-tilting infinite since it is a path algebra of type $\tilde{\mathbb{A}}$. We conclude that $\mathbf{S}_1$ is $\tau$-tilting infinite.

Suppose $n\ge 2$. We find that $\mathbf{S}_1$ is isomorphic to the bound quiver algebra $\mathbf{S}^s_1$ given by 
$$
\vcenter{\xymatrix@C=0.8cm@R=0.2cm{
1^+\ar@{-}[dr]&&&&\circ\ar@{-}[r]&\cdots\ar@{-}[r]&\circ\ar@{-}[dr]&\\
&2\ar@{-}[r]&\cdots\ar@{-}[r] &\circ\ar[ur]^-{\alpha}&&&&\circ\\
1^-\ar@{-}[ur]&&&&\circ\ar[ul]^-{\beta}\ar@{-}[r]&\cdots\ar@{-}[r]&\circ\ar@{-}[ur]&
}}
$$
with $\beta\alpha=0$. Then, we define, $\lambda\in \Bbbk$,
$$
M_\lambda :\vcenter{\xymatrix@C=0.9cm@R=0.2cm{
\Bbbk\ar@{-}[dr]^-{\binom{1}{\lambda}}&&&&&\Bbbk\ar@{-}[r]^1&\cdots\ar@{-}[r]^1&\circ\ar@{-}[dr]^1&\\
&\Bbbk^2\ar@{-}[r]^{(\begin{smallmatrix}
1 & 0 \\
0 & 1 \\
\end{smallmatrix})}&\Bbbk^2\ar@{-}[r]^{(\begin{smallmatrix}
1 & 0 \\
0 & 1 \\
\end{smallmatrix})}&\cdots\ar@{-}[r]^{(\begin{smallmatrix}
1 & 0 \\
0 & 1 \\
\end{smallmatrix})} &\Bbbk^2\ar[ur]^-{(0\ 1)}&&&&\Bbbk\\
\Bbbk\ar@{-}[ur]_{\binom{1}{1}}&&&&&\Bbbk\ar[ul]^-{\binom{1}{0}}\ar@{-}[r]_1&\cdots\ar@{-}[r]_1&\circ\ar@{-}[ur]_1&
}}
$$
which is a $\mathbf{S}^s_1$-module. One can check that 
\begin{itemize}
\item $M_\lambda\simeq M_{\lambda'}$ if and only if $\lambda=\lambda'\in \Bbbk$, 
\item $\mathrm{End}_{\mathbf{S}^s_1}(M_\lambda)=\{x\cdot 1\mid x\in \Bbbk\}\simeq \Bbbk$.
\end{itemize}
This implies that $M_\lambda$ is a brick and $(M_\lambda)_{\lambda\in \Bbbk}$ gives an infinite family of bricks. Using Proposition \ref{prop::brick-tau-rigid}, we conclude that $\mathbf{S}_1$ is $\tau$-tilting infinite.
\end{proof}

\begin{lemma}\label{lem::S_2}
Let $\mathbf{S}_2:=\Bbbk Q/I$ be the skew-gentle algebra given by
$$
Q: \quad \xymatrix@C=1cm{1\ar@(ld,lu)^{\epsilon_1}\ar@{-}[r]&2\ar@{-}[r]&3\ar@{-}[r]&\cdots\ar@{-}[r] &n\ar@(ru,rd)^{\epsilon_n}}\quad (n\ge 2)
$$
with $I=\langle \epsilon^2_1 - \epsilon_1, \epsilon^2_2 - \epsilon_2 \rangle$ that is $\mathsf{S}=\{\epsilon_1, \epsilon_n\}$. Then, $\mathbf{S}_2$ is $\tau$-tilting infinite.
\end{lemma}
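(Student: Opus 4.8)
The plan is to pass to the admissible presentation $\mathbf{S}_2^s = \Bbbk Q^s/I^s$ and to observe that, in contrast to the situation in Lemma~\ref{lem::S_1}, the resulting algebra is hereditary of extended Dynkin type. Then $\tau$-tilting infiniteness is immediate from the fact recalled just above Lemma~\ref{lem::S_1} that every path algebra of extended Dynkin type is $\tau$-tilting infinite (cf. \cite[Theorem 3.1]{Ada-rad-square-0}).

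First I would compute $Q^s$. The only relations in $\mathbf{S}_2$ are the idempotent relations $\epsilon_1^2-\epsilon_1$ and $\epsilon_n^2-\epsilon_n$ for the two special loops, so the underlying gentle algebra $A'=\Bbbk Q'/I'$ is the path algebra of the linear quiver $1-2-\cdots-n$ of type $\widetilde{\mathbb{A}}$-free Dynkin $\mathbb{A}_n$ with $I'=\emptyset$. Consequently $I^s=\emptyset$ as well, and $\mathbf{S}_2^s=\Bbbk Q^s$ is hereditary. Since the two endpoints $1$ and $n$ are the special vertices, in $Q^s$ they split as $1\mapsto\{1^+,1^-\}$ and $n\mapsto\{n^+,n^-\}$, while the interior vertices $2,\ldots,n-1$ are unchanged; correspondingly each of the two boundary arrows is doubled and the interior chain is left untouched.

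Next I would identify the underlying graph of $Q^s$. For $n\ge 3$ it has $n+2$ vertices and $n+1$ edges, and is the tree obtained by attaching the two pendant vertices $1^+,1^-$ to the vertex $2$ and the two pendant vertices $n^+,n^-$ to the vertex $n-1$ of the spine $2-3-\cdots-(n-1)$; this is exactly the extended Dynkin diagram $\widetilde{\mathbb{D}}_{n+1}$. For $n=2$ the diagram degenerates to the $4$-cycle $\widetilde{\mathbb{A}}_3$ coming from the four arrows $(1^\pm,\alpha,2^\pm)$. In either case $Q^s$ carries no oriented cycle: it is a tree when $n\ge 3$, and when $n=2$ the four arrows all point from the $1$-side to the $2$-side, giving the bipartite (source-to-sink) orientation of the cycle. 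Hence $\mathbf{S}_2^s$ is a finite-dimensional path algebra of extended Dynkin type.

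Finally, since $\mathbf{S}_2\cong\mathbf{S}_2^s$ and every path algebra of extended Dynkin type is $\tau$-tilting infinite, we conclude that $\mathbf{S}_2$ is $\tau$-tilting infinite. I do not expect a genuine obstacle here: all the content lies in writing down the admissible presentation correctly and in recognizing the affine type $\widetilde{\mathbb{D}}$. The only point requiring care is the degenerate case $n=2$, where one must check that doubling the unique boundary arrow does not create an oriented cycle (it does not, as all resulting arrows point from the $1$-side to the $2$-side). Alternatively, one could mirror the proof of Lemma~\ref{lem::S_1} and exhibit an explicit one-parameter family of bricks $(M_\lambda)_{\lambda\in\Bbbk}$ together with Proposition~\ref{prop::brick-tau-rigid}, but the reduction to a hereditary algebra of affine type is cleaner.
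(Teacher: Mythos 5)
Your proposal is correct and follows essentially the same route as the paper: both pass to the admissible presentation $\mathbf{S}_2^s$, observe that it is hereditary of type $\widetilde{\mathbb{D}}_{n+1}$ (with the two endpoints split into $1^{\pm}$ and $n^{\pm}$), and conclude by the $\tau$-tilting infiniteness of path algebras of extended Dynkin type from \cite[Theorem 3.1]{Ada-rad-square-0}. Your explicit treatment of the degenerate case $n=2$ (where the diagram is the bipartitely oriented $4$-cycle) is a welcome extra detail that the paper glosses over.
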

\begin{proof}
It is obvious that $\mathbf{S}_2$ is isomorphic to a hereditary algebra of type $\widetilde{\mathbb{D}}_{n+1}$, that is, the path algebra with any choice of orientation given by 
$$
\xymatrix@C=0.7cm@R=0.1cm{
1^+\ar@{-}[dr]&&&&&n^+\ar@{-}[dl]\\
&2&3\ar@{-}[r]\ar@{-}[l]&\cdots&n-1\ar@{-}[l]&\\
1^-\ar@{-}[ur]&&&&&n^-\ar@{-}[ul]
}.
$$
It is $\tau$-tilting infinite as shown in \cite[Theorem 3.1]{Ada-rad-square-0}.
\end{proof}

Now we are ready to state our main result in this section.
\begin{theorem}\label{theo::main-result-skew-gentle}
Let $A$ be a skew-gentle algebra. Then, $A$ is $\tau$-tilting finite if and only if $A$ is representation-finite, or equivalently, $A$ admits no band.
\end{theorem}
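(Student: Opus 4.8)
The plan is to prove both directions of the equivalence. The equivalence of ``representation-finite'' and ``admits no band'' is essentially the classification in Theorem~\ref{theo::classification}: a skew-gentle algebra has only finitely many indecomposable modules iff there are no bands, since each band $w_{\mathbb{Z}}$ produces a one-parameter family $M(\overrightarrow{w_{\mathbb{Z}}},\lambda)$ of pairwise non-isomorphic indecomposables (indexed by $\lambda \in \Bbbk^\times$ and the dimension of the associated vector space). So the real content is the equivalence with $\tau$-tilting finiteness. One direction is immediate: if $A$ is representation-finite, then $\mod A$ has only finitely many indecomposables, hence only finitely many bricks, hence only finitely many indecomposable $\tau$-rigid modules by Proposition~\ref{prop::brick-tau-rigid}, so $A$ is $\tau$-tilting finite.

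\textbf{The main direction} is the contrapositive: if $A$ admits a band, then $A$ is $\tau$-tilting infinite. First I would invoke Proposition~\ref{prop::minimal-exist} to reduce to the case where $A$ admits a \emph{minimal} band $w_{\mathbb{Z}}$, which by Definitions~\ref{prop::minimal-asymmetric-band} and~\ref{pro::minimal-symmetric-band} falls into one of four shapes: type $(1)$ (a $\widetilde{\mathbb{A}}$-cycle), type $(2)$, type $(3)$ (one special loop), or the symmetric type (two special loops). The strategy is to produce an infinite family of bricks and then apply Proposition~\ref{prop::brick-tau-rigid}, which tells us that $\tau$-tilting finiteness is equivalent to brick-finiteness. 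The key reduction is that the periodic part $\widehat{w}$ of a minimal band is supported on a convex subquiver whose associated subalgebra is (up to isomorphism, using the admissible presentation $A^s=\Bbbk Q^s/I^s$) one of a small list of explicit algebras.

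\textbf{The key steps,} in order, are as follows. For a type~$(1)$ minimal band the supporting subalgebra is a Nakayama-type cycle of type $\widetilde{\mathbb{A}}$, which is $\tau$-tilting infinite by \cite[Theorem 3.1]{Ada-rad-square-0}; for type~$(2)$ one similarly identifies a subalgebra derived-equivalent to a tame hereditary or one-cycle gentle algebra, handled by the gentle case of \cite{P19}. The genuinely new cases are those involving special loops: type~$(3)$ is exactly the algebra $\mathbf{S}_1$ of Lemma~\ref{lem::S_1}, and the symmetric type is exactly the algebra $\mathbf{S}_2$ of Lemma~\ref{lem::S_2}, both of which we have already shown to be $\tau$-tilting infinite. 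I would then argue that $\tau$-tilting infiniteness passes from such a subalgebra $B$ to $A$: since $B$ arises as $B\cong A^s/\langle e\rangle$ for an idempotent $e$ (a quotient by vertices outside the support, combined with localizing at the band's support), and quotients by idempotent-generated ideals as well as the infinite families of bricks constructed for $\mathbf{S}_1,\mathbf{S}_2$ lift to bricks over $A$, brick-infiniteness of $B$ forces brick-infiniteness of $A$.

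\textbf{The hard part} is the last step: controlling how $\tau$-tilting infiniteness (equivalently brick-infiniteness) propagates from the subalgebra determined by a minimal band back up to the full algebra $A$. One must verify that the bricks exhibited over $\mathbf{S}_1$ and $\mathbf{S}_2$ (and the tame-type bricks in cases $(1)$ and $(2)$) remain bricks, or at least give rise to infinitely many bricks, when regarded as $A$-modules via the admissible presentation $A^s$. The minimality conditions in Definitions~\ref{prop::minimal-asymmetric-band} and~\ref{pro::minimal-symmetric-band} --- in particular that the string $z$ or $u$ does not repeat vertices --- are precisely what guarantee that the relevant support subquiver is convex and that no extra arrows or relations of $A$ interfere with the endomorphism computation, so that $\End_A$ of each family member remains one-dimensional. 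Making this convexity-and-support bookkeeping precise, uniformly across the four band types, is where the care is required.
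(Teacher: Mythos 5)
Your proposal is correct and follows essentially the same route as the paper: reduce to a minimal band via Proposition~\ref{prop::minimal-exist}, handle the ordinary-string cases through the gentle quotient and \cite{P19}, handle the special-loop cases by passing to the quotient by the idempotents of unvisited vertices to land on $\mathbf{S}_1$ or $\mathbf{S}_2$ (Lemmas~\ref{lem::S_1} and \ref{lem::S_2}), and lift brick-infiniteness back to $A$ via Proposition~\ref{prop::brick-tau-rigid}. The step you flag as hard is routine in the paper's formulation, since $\mod(A/J)$ is a full subcategory of $\mod A$, so bricks over the quotient are automatically bricks over $A$; the minimality conditions are only needed to identify $A/J$ with the model algebras.
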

\begin{proof}
($\Leftarrow$) If $A$ admits no band, then it is representation-finite  and hence $A$ is $\tau$-tilting finite.

($\Rightarrow$) Suppose $A$ admits a band $w_{\mathbb{Z}}$. According to Proposition \ref{prop::minimal-exist}, we may further assume that $w_{\mathbb{Z}}$ is a minimal band. If the periodic part $\widehat{w}$ of $w_{\mathbb{Z}}$ is an ordinary string, then the gentle quotient $A'$ of $A$ is $\tau$-tilting infinite, shown in \cite{P19}. 

Let $J$ be the ideal generated by all idemponents associated with vertices that $w_{\mathbb{Z}}$ does not go through. If $\widehat{w}$ is an asymmetric band in the case of Definition \ref{prop::minimal-asymmetric-band} (3), then $A/J$ is isomorphic to $\mathbf{S}_1$. If $\widehat{w}$ is a symmetric band, then $A/J$ is isomorphic to $\mathbf{S}_2$. Both are $\tau$-tilting infinite, according to Lemma \ref{lem::S_1} and Lemma \ref{lem::S_2}.
\end{proof}

\section{Silting-discreteness of graded gentle algebras}

In this section, we give a classification of the graded gentle algebra for which the perfect derived category is silting discrete  via their associated surface models. Our main result is the following.

\begin{theorem}\label{thm:main1}
    Let $A$ be a homologically smooth, proper graded gentle algebra and let $(S,\eta,\Delta)$ be the associated graded surface. Then $\per(A)$ is silting-discrete if and only if $S$ is of genus zero and $\omega_\eta(\ell)\neq 0$ for any simple closed curve $\ell$.
\end{theorem}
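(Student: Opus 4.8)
The plan is to reduce silting-discreteness to a $\tau$-tilting finiteness statement about endomorphism algebras via Proposition~\ref{Prop:equ-silting-discrete}(5), and then to translate that statement into the geometric winding-number condition using Theorem~\ref{theo::main-result-skew-gentle}. The central object is, for each silting $P \in \per(A)$, the finite-dimensional algebra $\End_\T(P) = \Hom_{\per(A)}(P,P)$, which is the degree-$0$ part of the graded endomorphism algebra $\bigoplus_n \Hom_{\per(A)}(P,P[n])$; under the geometric model of silting objects the latter is the graded gentle algebra attached to a graded dissection $(\Delta',G')$ of the \emph{same} marked surface $(S,\eta)$. The first step I would carry out is to check that this degree-$0$ part is itself a gentle algebra: the degree-$0$ arrows span a subquiver inheriting the gentle conditions (1)--(2) and the length-two relations of Definition~\ref{definition:gentle algebras}. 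With this in hand, Theorem~\ref{theo::main-result-skew-gentle} applies verbatim, so $\End_\T(P)$ is $\tau$-tilting finite if and only if it admits no band, and the whole problem becomes: control the bands of $\End_\T(P)$ as $P$ ranges over $\silt\per(A)$.

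For sufficiency ($\Leftarrow$), assume $S$ has genus $0$ and carries no simple closed curve of winding number $0$. Given any silting $P$, the goal is to show $\End_\T(P)$ has no band. The key geometric lemma is that a band of $\End_\T(P)$, being a primitive cyclic string built from degree-$0$ arrows, lifts to a simple closed curve $\gamma$ on $S$ all of whose oriented intersections carry grading $0$; evaluating the combinatorial winding number of \cite{LP20} on such a $\gamma$ then forces $\omega_\eta(\gamma)=0$, contradicting the hypothesis. Hence every $\End_\T(P)$ is band-free, thus $\tau$-tilting finite by Theorem~\ref{theo::main-result-skew-gentle}, and Proposition~\ref{Prop:equ-silting-discrete}(5) yields silting-discreteness.

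For necessity ($\Rightarrow$), assume $\per(A)$ is silting-discrete. I would first invoke Proposition~\ref{Prop:silt} together with the partial-silting characterization of \cite{JSW23} recalled in Theorem~\ref{JSW} to conclude that $S$ has genus $0$ or $1$. To finish I argue by contraposition against Proposition~\ref{Prop:equ-silting-discrete}(5): if $S$ has genus $\ge 1$, a topological argument based on the Poincar\'e--Hopf formula \eqref{eq:PH} and the behaviour of winding numbers of simple closed curves on a positive-genus piece produces some $\gamma$ with $\omega_\eta(\gamma)=0$; and if $S$ has genus $0$ but admits such a $\gamma$, then a suitable graded dissection $(\Delta',G')$ realizes $\gamma$ as an honest degree-$0$ band of $\End_\T(P)$ for some silting $P$. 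In either case $\End_\T(P)$ has a band, hence is $\tau$-tilting infinite by Theorem~\ref{theo::main-result-skew-gentle}, contradicting silting-discreteness. Thus $S$ must have genus $0$ with no winding-number-$0$ simple closed curve.

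The step I expect to be the main obstacle is the precise dictionary between bands of the degree-$0$ endomorphism algebras and winding-number-$0$ simple closed curves, carried out in both directions and uniformly over all silting objects: for sufficiency one must show that a degree-$0$ band forces winding number $0$, whereas for necessity one must conversely realize any prescribed winding-number-$0$ curve as a genuine band of $\End_\T(P)$ by constructing the right graded dissection. Since the line field $\eta$ is a homotopy invariant of $S$ but its interaction with a chosen dissection is reshuffled under flips/mutations, keeping track of which intersections land in degree $0$ is the delicate point, and it is exactly here that the combinatorial winding number of \cite{LP20} and the geometric model of silting objects for graded gentle algebras have to be combined with care.
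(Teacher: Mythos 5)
Your overall strategy---reducing silting-discreteness to the $\tau$-tilting finiteness of endomorphism algebras via Proposition~\ref{Prop:equ-silting-discrete}(5), invoking Plamondon's theorem to turn that into a statement about bands of the degree-zero part, and translating bands into winding-number-zero closed curves---is the same as the paper's. However, two of your steps contain genuine gaps, and you have in fact identified the first one yourself as ``the main obstacle'' without resolving it.

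In the sufficiency direction you assert that a band of $\End_{\per(A)}(P)$ ``lifts to a simple closed curve.'' This is false as stated: a homotopy band of a gentle algebra corresponds to a closed curve on $S$ of winding number zero, but that curve need not be simple. The remark following Lemma~\ref{lem:key3} exhibits a homotopy band whose associated curve is a figure-eight, and the example after Lemma~\ref{lem:key2} shows a non-simple closed curve of winding number zero on a surface whose perfect derived category \emph{is} silting-discrete---so the non-simple case genuinely occurs and yields no contradiction. The paper closes this gap by not starting from an arbitrary band: it uses the structural result of \cite{P19} that a representation-infinite degree-zero part must contain a subquiver of type $\tilde{A}_m$ as in \eqref{eq:typeI} or of the two-cycles-joined-by-a-path shape \eqref{eq:typeII}, and then verifies that each of these specific configurations produces a \emph{simple} closed curve of winding number zero, choosing the walk $w_l w w_r w^{-1}$ rather than $w_l w w_r^{-1} w^{-1}$ in the second case precisely to ensure simplicity. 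Without such a selection argument your sufficiency proof does not go through.

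In the necessity direction your treatment of positive genus is not viable. After Theorem~\ref{JSW} reduces matters to genus $0$, or genus $1$ with condition \eqref{equ:neq}, you propose that ``a topological argument\dots produces some $\gamma$ with $\omega_\eta(\gamma)=0$'' on any positive-genus piece. You give no justification, and no such simple closed curve need exist on a genus-one surface satisfying \eqref{equ:neq}; if one always existed, the genus-one alternative in Theorem~\ref{JSW} would be vacuous, which it is not. The paper's Lemma~\ref{lem:key1} instead normalizes the line field by an orientation-preserving homeomorphism (using the classification of line fields by winding numbers) so that $\omega_{\eta'}(s)=\omega_{\eta'}(t)=-m$, builds an explicit two-term pre-silting object supported on a Kronecker-type subquiver as in \eqref{equ:An}, and completes it to a silting object via Proposition~\ref{Prop:silt}, whose endomorphism algebra is then representation-infinite. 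That completion step through Proposition~\ref{Prop:silt} is also the mechanism hiding behind your genus-zero necessity argument: to realize a winding-number-zero curve as a band of $\End_{\per(A)}(P)$ for a \emph{silting} $P$, one first shows the two arcs resolving $\gamma$ form a pre-silting object and only then uses silting-discreteness to complete it; this should be made explicit rather than asserted.
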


We divide the proof of Theorem \ref{thm:main1} into two parts: the necessary and sufficient conditions.

\subsection{The necessary part of Theorem \ref{thm:main1}}

Let $A = \Bbbk Q/\langle I\rangle$  be a homologically smooth and proper graded gentle algebra arising from a graded marked surface with dissection $(S,\eta,\Delta)$. 
It is clear that $\per(A)$ is Krull-Schmidt and Hom-finite. One useful property of silting-discreteness is that any pre-silting object is partial silting, see Proposition \ref{Prop:silt}. 
For graded gentle algebras, we have the following complete characterization of this property due to \cite{JSW23}. Recall that for a graded surface $(S,\eta)$ of genus one, we denote by $s$ a simple closed curve around the hole of $S$
and by $t$ a simple closed curve around the tube of $S$.

\begin{Thm}\cite{JSW23}\label{JSW}
Let $A$ be a graded gentle algebra arising from a graded surface with dissection $(S,\eta,\Delta)$.
The following two conditions are equivalent:
\begin{enumerate}
    \item Any pre-silting object in $\per(A)$ is partial silting;
    \item $S$ is of genus zero, or $S$ is of genus one such that 
\begin{equation}\label{equ:neq}
    \Tilde{A}(\eta)\neq \gcd(\omega_\eta(\partial_1)+2,\omega_\eta(\partial_2)+2,\cdots,\omega_\eta(\partial_b)+2)
\end{equation}
where $\Tilde{A}(\eta)=\gcd(\omega_\eta(s),\omega_\eta(t),\omega_\eta(\partial_1)+2,\omega_\eta(\partial_2)+2,\cdots,\omega_\eta(\partial_b)+2)$.
\end{enumerate} 
\end{Thm}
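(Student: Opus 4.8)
The plan is to transport the entire problem to the surface model $(S,\eta,\Delta)$ and to isolate, purely geometrically, which pre-silting objects can obstruct the partial-silting property. By the dictionary of \cite{HKK17,OPS18,LP20}, indecomposable objects of $\per(A)$ are graded curves on $S$ — either graded arcs or graded closed curves (bands) carrying a rank-one local system — and the graded $\Hom$-spaces are computed from the graded intersections of the corresponding curves; moreover silting objects correspond exactly to full graded dissections, i.e.\ to complete systems of graded arcs cutting $S$ into polygons. The first observation is that an indecomposable object supported on an arc is always partial silting, since any graded arc can be completed to a full graded dissection. Hence the only possible obstructions to condition (1) come from band objects, which are never summands of a dissection-type silting object and therefore can never be partial silting. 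Thus (1) is equivalent to the assertion that no closed curve on $S$ carries a pre-silting band object.

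The technical heart is then to decide which closed curves carry a pre-silting band object, i.e.\ a band object $X_\gamma$ with $\Hom_{\per(A)}(X_\gamma,X_\gamma[n])=0$ for all $n\ge 1$. First I would compute the graded self-extension algebra $\bigoplus_n \Hom(X_\gamma,X_\gamma[n])$ of a band object from the graded self-intersections of $\gamma$ together with the line field $\eta$. I expect the outcome to be that, besides the identity, this algebra has a distinguished ``fundamental class'' whose degree is governed by the winding number $\omega_\eta(\gamma)$ and by the homological behaviour of $\gamma$: a separating curve (one that bounds on both sides) always contributes a self-extension in a positive degree, whereas a non-separating curve can be pre-silting precisely when $\omega_\eta(\gamma)$ meets a specific condition. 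This already settles one implication: if $S$ has genus $0$ then every simple closed curve is separating, no pre-silting band object exists, and (1) holds — matching the first alternative in (2).

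For the remaining cases I would reduce to simple closed curves and combine the winding-number criterion above with the realization theory of winding numbers of simple closed curves on $(S,\eta)$, using the Poincar\'e--Hopf formula \eqref{eq:PH} to relate these winding numbers to the boundary data $m_i=\omega_\eta(\partial_i)+2$. For genus $\ge 2$ one can always realize a non-separating simple closed curve meeting the criterion, so (1) fails, which is why such surfaces are excluded from (2). For genus $1$, the existence of an obstructing non-separating simple closed curve becomes a realizability question for one specific winding number; expressing the set of realizable winding numbers through $\omega_\eta(s)$, $\omega_\eta(t)$ and the $m_i$, this reduces exactly to the failure of the gcd condition \eqref{equ:neq}, i.e.\ to the statement that an obstructing curve exists if and only if $\gcd(m_1,\dots,m_b)$ divides both $\omega_\eta(s)$ and $\omega_\eta(t)$, equivalently $\Tilde{A}(\eta)=\gcd(m_1,\dots,m_b)$.

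The main obstacle, as I see it, is twofold. The first difficulty is the graded self-$\Hom$ computation for band objects and, in particular, reading off from the line field the exact degree in which the fundamental-class self-extension of $X_\gamma$ sits, together with establishing cleanly the separating-versus-non-separating dichotomy that forces genus $0$ to satisfy (1). The second, and subtler, difficulty is the genus-$1$ realization analysis: translating ``there is a non-separating simple closed curve of the obstructing winding number'' into the arithmetic comparison of $\gcd(\omega_\eta(s),\omega_\eta(t),m_1,\dots,m_b)$ with $\gcd(m_1,\dots,m_b)$. This number-theoretic step, which rests on how Dehn twists move winding numbers within a fixed homology class, is where I expect the genuine work to lie.
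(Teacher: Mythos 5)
First, a remark on the ground truth: the paper does not prove Theorem~\ref{JSW} at all --- it is imported verbatim from \cite{JSW23} --- so there is no in-paper argument to compare against, and your sketch has to be measured against what the cited proof actually does.

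Your reduction fails at its very first step. You assert that arc objects are harmless (``any graded arc can be completed to a full graded dissection'') and that the only possible obstruction to condition (1) comes from band objects, so that (1) becomes ``no closed curve carries a pre-silting band object.'' Both halves are wrong. A band object $X_\gamma$ (a gradable closed curve with a rank-one local system) always satisfies $\Hom_{\per(A)}(X_\gamma,X_\gamma[1])\neq 0$: band objects sit in homogeneous tubes, are fixed by the Auslander--Reiten translation, and AR/Serre duality gives $\Hom(X_\gamma,X_\gamma[1])\cong D\End(X_\gamma)\neq 0$ (concretely, for the Kronecker quiver the quasi-simple regular modules are band modules with $\Ext^1(R,R)=\Bbbk$). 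Hence band objects are \emph{never} pre-silting and are simply irrelevant to the question; with your reduction, condition (1) would hold vacuously for every graded gentle algebra, contradicting the theorem already in genus $\ge 2$. The genuine obstruction lies exactly where you wave it away: every pre-silting object is a direct sum of arc objects forming a pairwise non-positively graded intersecting collection, and such a collection need not extend to a full graded admissible dissection whose intersection degrees all remain non-positive. Completing an arc system topologically is easy; the difficulty is that the gradings of the added arcs are dictated by the line field, and on surfaces of positive genus the winding-number data can force every completion to acquire a positive-degree intersection. This is the content of the genus-one condition \eqref{equ:neq} and of the counterexamples in \cite{LZ23}.

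Your final paragraph --- realizing winding numbers of simple closed curves via the Poincar\'e--Hopf formula and the Dehn-twist action on line fields, and reducing genus one to comparing $\Tilde{A}(\eta)$ with $\gcd(m_1,\dots,m_b)$ --- is the right kind of arithmetic and does occur in the actual argument (compare the use of \cite[Corollary 1.10]{LP20} in the proof of Lemma~\ref{lem:key1}). But there it governs completions of arc systems, not the existence of pre-silting bands, so the sketch cannot be repaired without replacing its central mechanism.
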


By Proposition \ref{Prop:equ-silting-discrete}, to show that $\per(A)$ is silting-discrete, it is equivalent to showing that for any silting object $P$ in $\per(A)$, the algebra $\End_{\per(A)}(P)$ is $\tau$-tilting finite.
For a finite-dimensional gentle algebra, it is shown in \cite{P19} that $\tau$-tilting finiteness is equivalent to representation-finiteness, or equivalently, to the non-existence of band modules.

Based on this, we first show the following lemma, which gives the first part of the necessary condition in Theorem \ref{thm:main1}. 

\begin{lemma}\label{lem:key1}
    If $\per(A)$ is silting-discrete, then the genus of $S$ is zero.
\end{lemma}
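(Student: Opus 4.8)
The plan is to combine the two finiteness criteria available in the excerpt: the characterization of the partial silting property in Theorem~\ref{JSW} together with Proposition~\ref{Prop:silt}, and the endomorphism-algebra criterion for silting-discreteness in Proposition~\ref{Prop:equ-silting-discrete}(5). First I would apply Proposition~\ref{Prop:silt}: since $\per(A)$ is assumed silting-discrete, every pre-silting object is partial silting, so Theorem~\ref{JSW} forces $(S,\eta)$ to be of genus $0$, or of genus $1$ satisfying the gcd inequality \eqref{equ:neq}. This immediately excludes genus $\ge 2$ and reduces the whole lemma to ruling out the genus-one case.

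To exclude genus one I would argue by contradiction and produce a single silting object $P\in\per(A)$ whose endomorphism algebra is $\tau$-tilting infinite; by Proposition~\ref{Prop:equ-silting-discrete}(5) this contradicts silting-discreteness. Since $A$ is gentle, the endomorphism algebra $\End_{\per(A)}(P)$ of any silting object is again a (graded) gentle algebra modeled by the same graded surface $(S,\eta)$, so by \cite{P19} it is $\tau$-tilting infinite as soon as it admits a band module. Geometrically, band modules of such an endomorphism algebra correspond to closed curves of winding number $0$ with respect to $\eta$. Thus the task becomes: on a genus-one graded surface, exhibit a simple closed curve $\gamma$ with $\omega_\eta(\gamma)=0$ together with a dissection $\Delta'$ (equivalently a silting object $P=P_{\Delta'}$) realizing $\gamma$ as a degree-zero band of $\End(P)=A(\Delta')$.

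The topological input I would establish first is that a positive-genus surface carries an essential non-separating simple closed curve, and that the two generating classes $s,t$ of the handle can be combined --- using Dehn twists and the Poincar\'e--Hopf relation \eqref{eq:PH} to keep track of winding numbers --- to produce a simple closed curve of winding number exactly $0$ for the given line field $\eta$. Given such a $\gamma$, I would then choose the dissection $\Delta'$ so that the cyclic reduced walk encoding $\gamma$ uses only degree-zero arrows of $A(\Delta')$, which is precisely the condition for $\gamma$ to define a band \emph{module}, rather than a genuine band complex, of the finite-dimensional algebra $\End(P)$.

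I expect the main obstacle to be this last geometric-to-algebraic translation: one must verify carefully that a winding-number-zero closed curve descends to an honest band module of an appropriately chosen endomorphism gentle algebra, and not merely to a one-parameter family of band objects distributed over several cohomological degrees. Controlling this requires the combinatorial description of the winding number from \cite{LP20} (the winding number as the signed sum of the arrow degrees along the walk), together with the compatibility of the surface model under silting mutation. Once this is in place, the reduction of genus via Theorem~\ref{JSW} and the final invocation of Proposition~\ref{Prop:equ-silting-discrete}(5) and \cite{P19} are routine.
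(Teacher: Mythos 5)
Your opening move --- Proposition \ref{Prop:silt} plus Theorem \ref{JSW} to reduce to genus zero or genus one with \eqref{equ:neq} --- is exactly the paper's first step, and your overall plan for killing the genus-one case (exhibit a Kronecker-type band inside the endomorphism algebra of a silting object, then invoke Proposition \ref{Prop:equ-silting-discrete}(5) and \cite{P19}) is also the paper's strategy. The problems are concentrated in the two steps you yourself identify as the substance of the argument. The first gap is topological: producing a simple closed curve of winding number zero on the genus-one surface is the crux, and ``Dehn twists plus the Poincar\'e--Hopf relation'' will not deliver it. The Poincar\'e--Hopf formula \eqref{eq:PH} constrains only the boundary winding numbers and says nothing about $\omega_\eta(s)$ and $\omega_\eta(t)$, and naive bookkeeping under twists of the handle generators does not obviously reach $0$ (for instance, if $\omega_\eta(s)=\omega_\eta(t)=1$, twisting one generator along the other changes winding numbers by multiples of $\omega_\eta(\cdot)+1=2$, so one stays among odd values). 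What actually makes this work in the paper is the classification of line fields up to homotopy and up to the mapping class group action (\cite[Proposition 3.4(5)]{APS19}, \cite[Corollary 1.10]{LP20}): condition \eqref{equ:neq} forces $m=\gcd(\omega_\eta(s),\omega_\eta(t))\neq 0$, one replaces $\eta$ by a homeomorphic line field $\eta'$ with the same boundary winding numbers, the same invariant $\tilde{A}$, and $\omega_{\eta'}(s)=\omega_{\eta'}(t)=-m$; only after this normalization does the curve carrying the two parallel arrows $\alpha,\gamma$ of the standard dissection have winding number $|\alpha|-|\gamma|=0$. Without citing and using this classification, your topological input is unproved.

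The second gap is the one you flag as ``the main obstacle'', and it remains unresolved in your sketch. You do not need to --- and in general cannot directly --- produce a full silting object realizing the curve as a degree-zero band: not every admissible dissection yields a silting object (the induced grading must be non-positive), so the parenthetical ``equivalently a silting object $P=P_{\Delta'}$'' is unjustified, as is the blanket claim that $\End_{\per(A)}(P)$ is gentle for every silting $P$. The paper sidesteps both issues: it builds only the two-summand pre-silting object $e_1A'\oplus e_2A'$ (checking $|\alpha|=|\gamma|=0$ and $|\beta|=-m+1\le 0$, which is precisely where $m>0$ is needed), uses the silting-discreteness hypothesis itself via Proposition \ref{Prop:silt} to complete it to a silting object $P\oplus P'$, and then applies the idempotent-subalgebra criterion \cite[Corollary 2.4]{P19} to conclude that $\End_{\per(A)}(P\oplus P')$ is $\tau$-tilting infinite because its corner algebra $H^0((e_1+e_2)A'(e_1+e_2))$ contains a Kronecker quiver. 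Incorporating these two repairs would turn your outline into the paper's proof.
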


\begin{proof}
Since $\per(A)$ is silting-discrete, by Proposition \ref{Prop:silt} and Theorem \ref{JSW}, $S$ is either of genus zero or of genus one with condition \eqref{equ:neq} holding.
We now show that if $S$ is of genus one and condition \eqref{equ:neq} holds, then $\per(A)$ is not silting-discrete.

Let $m=\gcd(\omega_\eta(s),\omega_\eta(t))$. Note that \eqref{equ:neq} is equivalent to the condition that
\[\gcd(\omega_\eta(\partial_1)+2,\omega_\eta(\partial_2)+2,\cdots,\omega_\eta(\partial_b)+2)\nmid m.\] 
In particular, $m$ is nonzero. So we can assume that $m>0$. 

It is known that line fields on $S$ are classified up to homotopy by their winding numbers on the boundary components and the handle loops. Conversely, note that the space of line fields is an $\h^1(S,\mathbb{Z})$-affine space, so any assignment of integers to the collection of all boundary components and handle cycles that respects the Poincar\'e-Hopf index formula corresponds to a unique cohomology class in $\h^1(S,\mathbb{Z})$, and thus determines a line field up to homotopy (see details in \cite[Proposition 3.4 (5)]{APS19} and \cite{C72}). 

Thus we may always construct a new line field $\eta'$ on $S$ such that $\omega_{\eta'}(\partial_i)=\omega_{\eta}(\partial_i)$ for each $1\leq i\leq b$, and $\omega_{\eta'}(s)=\omega_{\eta'}(t)=-m$.  Note that $\Tilde{A}(\eta)=\Tilde{A}(\eta')$, thus
there exists an orientation-preserving homeomorphism $\phi: S\rightarrow S$ such that $\phi(\eta')$ is homotopic to $\eta$ by \cite[Corollary 1.10]{LP20}.

Now let $\Delta'$ be a graded admissible dissection on $(S,\eta')$ such that the associated graded gentle algebra $A'$ contains the following sub-quiver (for example, we can consider the `standard  dissection'  introduced in \cite[Section 4.1]{JSW23}):
\begin{equation}
\label{equ:An}
\begin{tikzpicture}[
    >=stealth,              % 箭头样式
    node distance=1.5cm,    %  减小节点间距，
    baseline=(current bounding box.center),
    font=\small,
    thick                   % 线条加粗
]

    % 1. 定义节点
    \node (1) {$1$};
    \node (2) [right=of 1] {$2$};
    
    % Q'' 节点：灰色虚线框
    \node[draw=gray, dashed, rounded corners=3pt, inner sep=4pt] (Q) [right=of 2] {$Q''$};

    % 定义垂直偏移量，方便统一调整间距
    % 增大间距，给中间的 label 留出空间
    \def\vshift{1em} 

    % 2. 绘制箭头
    
    % Alpha (上, 1->2)
    % label 放上方 (above)，通过 yshift 拉开距离
    \draw[->] ([yshift=\vshift]1.east) -- node[above, font=\scriptsize, inner sep=1pt] {$\alpha$} ([yshift=\vshift]2.west);
    
    % Beta (中, 2->1)
    \draw[->] (2.west) -- node[above, font=\scriptsize, inner sep=1pt] {$\beta$} (1.east);
    
    % Gamma (下, 1->2)
    \draw[->] ([yshift=-\vshift]1.east) -- node[above, font=\scriptsize, inner sep=1pt] {$\gamma$} ([yshift=-\vshift]2.west);

    % Delta (2->Q'')
    \draw[->] (2) -- node[above, font=\scriptsize] {$\delta$} (Q);

\end{tikzpicture}
\end{equation}

where $\za\zb$ and $\zb\zg$ in the ideal of the relations.
Furthermore,  $\omega_{\eta'}(s)=|\za|+|\zb|-1$ and $\omega_{\eta'}(t)=|\zb|+|\zg|-1$, thus we may assume that $|\za|=|\zg|=0$ and $|\zb|=-m+1$ by shifting the grading if necessary. 
In this case, $e_1A'\oplus e_2A'$ is a pre-silting object in $\per(A')$, since $\Hom_{\per(A')}(e_iA',e_j A'[n])=\h^n(e_jA'e_i)=0$ for any $i=1,2$ and $n>0$.

Assume that $\per(A)$ is silting-discrete. 
Let $F: \per(A')\rightarrow \per(A)$ be the derived equivalence induced by the homeomorphism between $(S,\eta)$ and $(S',\eta')$ (see \cite[Corollary 1.10]{LP20}). Let $P:=F(e_1A'\oplus e_2A')$. Since $e_1A'\oplus e_2A'$ is a pre-silting object in $\per(A')$, $P$ is pre-silting in $\per(A)$, and thus partial silting by Proposition \ref{Prop:silt}. Therefore, there exists a pre-silting object $P'$ in $\per(A)$ such that $P\oplus P'$ is silting.

Note that $\End_{\per(A)}(P) \cong \End_{\per(A')}(e_1A' \oplus e_2A') \cong H^0((e_1+e_2)A'(e_1+e_2))$. 
Since this algebra contains a Kronecker quiver, it admits a band $\alpha\gamma^{-1}$, 
which implies that $\End_{\per(A)}(P \oplus P')$ is representation-infinite. 
It follows from \cite{P19} that $\End_{\per(A)}(P \oplus P')$ is $\tau$-tilting infinite. 
However, this contradicts Proposition \ref{Prop:equ-silting-discrete}, 
and we therefore conclude that $\per(A)$ is not silting-discrete.
\end{proof}

We next show the second part of the necessary condition in Theorem \ref{thm:main1}.

\begin{lemma}\label{lem:key2}
  Assume $S$ is of genus $0$ and $\per(A)$ is silting-discrete. Then $\omega_\eta(\ell)\not=0$ for any simple closed curve $\ell$ on $S$.
\end{lemma}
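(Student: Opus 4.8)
The plan is to prove the contrapositive, mirroring the proof of Lemma \ref{lem:key1}. Assume $S$ has genus $0$ and carries a simple closed curve $\gamma$ with $\omega_\eta(\gamma)=0$; the goal is to exhibit a silting object of $\per(A)$ whose endomorphism algebra is a representation-infinite gentle algebra, hence $\tau$-tilting infinite by \cite{P19}, contradicting silting-discreteness through Proposition \ref{Prop:equ-silting-discrete}(5). The geometric heart of the matter is that a curve of winding number $0$ can be turned, after a change of dissection, into a \emph{Kronecker band}: a pair of parallel degree-$0$ arrows between two arcs.

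Concretely, I would first select a graded admissible dissection $\Delta'$ of the \emph{same} graded surface $(S,\eta)$ such that $\gamma$ crosses exactly two arcs $\ell_1,\ell_2$, the two crossings giving two \emph{parallel} arrows $\alpha,\beta\colon\ell_1\to\ell_2$ of the associated graded gentle algebra $A'=A(\Delta')$; this realizes $\gamma$ as the band $\alpha\beta^{-1}$. Such a $\Delta'$ is built by cutting a tubular (annular) neighbourhood of $\gamma$ with two arcs meeting $\gamma$ once each and completing the dissection over the genus-$0$ complement. Since all dissections of $(S,\eta)$ produce derived-equivalent algebras \cite{LP20}, we get $\per(A')\simeq\per(A)$. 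The hypothesis $\omega_\eta(\gamma)=0$ translates, for this parallel configuration, into $|\alpha|=|\beta|$ (the winding number of a Kronecker band being the difference of its two arrow degrees), and a single grading shift of the arc $\ell_2$ then makes $|\alpha|=|\beta|=0$.

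Next, exactly as in Lemma \ref{lem:key1}, I would arrange the local dissection and the ambient gradings so that no path between $\ell_1$ and $\ell_2$, and no cycle at either vertex, has positive degree; this makes $e_{\ell_1}A'\oplus e_{\ell_2}A'$ pre-silting, since $\Hom_{\per(A')}(e_{\ell_i}A',e_{\ell_j}A'[n])=\h^n(e_{\ell_j}A'e_{\ell_i})=0$ for $n>0$. Transporting it along $\per(A')\simeq\per(A)$ yields a pre-silting object $P$, which is partial silting by Proposition \ref{Prop:silt}; write $P\oplus P'$ for a silting completion. Then $\End_{\per(A)}(P)\cong\h^0\big((e_{\ell_1}+e_{\ell_2})A'(e_{\ell_1}+e_{\ell_2})\big)$ contains the Kronecker quiver $\ell_1\rightrightarrows\ell_2$ and so admits the band $\alpha\beta^{-1}$, hence is representation-infinite; being a corner of the gentle algebra $\End_{\per(A)}(P\oplus P')$, the latter is representation-infinite as well, thus $\tau$-tilting infinite by \cite{P19}, contradicting Proposition \ref{Prop:equ-silting-discrete}(5). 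Therefore $\per(A)$ is not silting-discrete.

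The main obstacle is the construction in the second step: guaranteeing that the two crossings yield parallel (not anti-parallel) arrows — only then does $\omega_\eta(\gamma)=0$ let one shift both degrees to $0$ — while simultaneously securing the pre-silting condition for $e_{\ell_1}A'\oplus e_{\ell_2}A'$. The role of the hypothesis can be made transparent via Poincar\'e--Hopf: if $\gamma$ separates the boundary components into $J$ and its complement, then applying \eqref{eq:PH} to one side gives $\omega_\eta(\gamma)=2-\sum_{j\in J}m_j$ with $m_j=\omega_\eta(\partial_j)+2$; since $\sum_{j}m_j=4$ in genus $0$, $\omega_\eta(\gamma)=0$ is precisely the equipartition $\sum_{j\in J}m_j=\tfrac12\sum_j m_j$ of Theorem \ref{thm::1.2}(3). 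This identity also guides the choice of $\Delta'$.
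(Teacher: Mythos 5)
Your proposal follows essentially the same route as the paper's proof: both pass to a dissection of $(S,\eta)$ in which the winding-number-zero simple closed curve is realized by two arcs carrying two parallel arrows whose degrees agree because $\omega_\eta(\ell)=|p|-|q|=0$ (hence can be shifted to zero), verify that the sum of the two corresponding projectives is pre-silting, complete it to a silting object via Proposition \ref{Prop:silt}, and derive a contradiction from the $\tau$-tilting infiniteness of the Kronecker corner of the resulting endomorphism algebra. The only cosmetic difference is the final step, where the paper transfers $\tau$-tilting infiniteness directly from the idempotent subalgebra via \cite[Corollary 2.4]{P19} instead of invoking gentleness of $\End_{\per(A)}(P\oplus P')$.
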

\begin{proof}
Assume there exists a simple closed curve $\ell$ whose winding number is $0$. 
Consider the curves  $\{1,2\}$ as shown in the figure below; that is, the smoothing of curves $1$ and $2$ is isotopic to $\ell$. 
Then the curves $1$ and $2$ form an admissible collection on the surface and can be extended to an admissible dissection. The associated graded gentle algebra is denoted by $A'=\Bbbk Q'/\langle I'\rangle$.
\begin{center}
\begin{tikzpicture}[scale=1, >=Stealth]

% 范围略大于圆盘半径 (2.5)，设为 -2.8 到 2.8
\useasboundingbox (-2.8,-2.8) rectangle (2.8,2.8);

% === 样式定义 ===
\tikzset{hole/.style={thick, draw=black, pattern=north east lines, pattern color=gray!40}}
\tikzset{angleArrow/.style={-{Stealth[width=3.5pt, length=4.5pt]}, thick, black, shorten >=1pt, shorten <=1pt}}

% 主圆盘（Genus 0曲面）
\draw[thick] (0,0) circle (2.5cm); 

% 上半部分结构
\begin{scope}[yshift=0.5cm]
    \foreach \x in {-1,1} {
        \draw[hole] (\x,0) circle (0.4cm);
        \filldraw[blue] (\x,0.4) circle (1.5pt);
    }
    \node[scale=1.2] at (0,0) {$\cdots$};
    \draw[orange, thick] (0,0.2) ellipse (1.6cm and 1.2cm) node[above=1.1cm]{$\ell$};
\end{scope}

% 下半部分结构
\begin{scope}[yshift=-1.2cm]
    \draw[hole] (1.1,0) circle (0.3cm);
    \filldraw[blue] (0,-1.3) circle (1.5pt);
    \node[scale=1.2] at (1.8,0) {$\cdots$};
\end{scope}

% 绘制两条连接曲线
\begin{scope}
    % 曲线1
    \draw[blue, thick]
        (-1,0.9) .. controls (-2,2) and (-2.5,-1) ..
        (0,-2.5)
        node[midway, left, blue] {1};
    \draw[blue, thick]
        (-1,0.9) .. controls (5.5,1.8) and (0,0) ..
        (0,-2.5)
        node[midway, right, blue] {2};

    % p 和 q 的标记小箭头
    \draw[angleArrow, thick, bend left=30] (-.7,-2) to (0.12,-1.8);
    \node at (-.5,-1.5) {$q$};
    \draw[angleArrow, thick, bend left=30] (-1.25,1.1) to (-.45,1);
    \node at (-0.9,1.4) {$p$};
\end{scope}
\end{tikzpicture}
\end{center}

Let $p$ and $q$ be the two paths in $Q'$ from vertex $1$ to $2$, as illustrated in the figure above. 
Given the assumption that $\omega_\eta(\ell)=0$ and the fact that $\omega_\eta(\ell)=|p|-|q|$ 
(cf. \cite[Section 3.2]{JSW23}), we obtain $|p|=|q|$. 
Furthermore, by applying a grading shift to $A'$, we may assume without loss of generality 
that $|p|=|q|=0$.
Note that we can read any path in the quiver from the associated surface, and by the picture depicted above, it is clear that any possible path starting at vertex $2$ and ending at vertex $1$ must lie in the ideal $I'$. Thus we have
\[\Hom_{\per(A')}(e_1A'\oplus e_2A',(e_1A'\oplus e_2A')[n])=\h^n(e_1A'e_1)\oplus \h^n(e_1A'e_2)\op\h^n(e_2A'e_2)=0, \]
for any $n>0$, which means that $e_1A'\oplus e_2A'$ is a pre-silting object in $\per(A')$. 
Since $\per(A')\simeq \per(A)$ is silting-discrete, by Proposition \ref{Prop:silt}, $e_1A'\oplus e_2A'$ can be completed to a silting object $P$. 

Note that $\h^0((e_1+e_2)A'(e_1+e_2))$ is the algebra defined by the Kronecker quiver, which is well-known to be representation-infinite and thus $\tau$-tilting infinite. Since this algebra is a subalgebra of $\End_{\per(A')}(P)$ given by an idempotent, it follows that  $\End_{\per(A')}(P)$ is also $\tau$-tilting infinite (see for example \cite[Corollary 2.4]{P19}). This contradicts the silting-discreteness of $\per(A')$ by Proposition \ref{Prop:equ-silting-discrete}.
Therefore, $\omega_\eta(\ell)\not=0$ for any simple closed curve $\ell$ on $S$.
\end{proof}
We remark that Lemma \ref{lem:key2} does not hold for arbitrary closed curves. More precisely, if $\per(A)$ is silting-discrete, there may exist a closed curve $\ell$ whose winding number is zero, as illustrated in the following example. 
\begin{example}
Let $A=\Bbbk Q/\langle I\rangle$ be a graded gentle algebra given by the following quiver
\[
\xymatrix@C=4pc{ 
1\ar@/^1pc/[r]^{\alpha}       
\ar@/_1pc/[r]_{\beta}="B"     &
2 \ar[r]^{\theta}="T"         &
3 \ar@/^1pc/[r]^{\gamma}        
\ar@/_1pc/[r]_{\delta}="D"    &
4  } \]
with grading  $|\alpha|=1=|\gamma|$, $|\beta|=0=|\delta|$, and $|\theta|=0$. 
 And $I$ is the set of relations given by $\beta\theta$ and $\theta\delta$.
The associated surface model $(S,\eta,\Delta)$ is a disk with $3$ boundary components, 
see the left picture in the following

\begin{center}
% ==================== 图 1：基础 Dissection ====================
\begin{tikzpicture}[scale=1.1, >=Stealth]

%角标箭头样式: 使用 Stealth 箭头，并指定较小的宽度使其变细
% width=2.5pt 控制宽度(变细), length=4.5pt 控制长度。可以根据需要微调这两个数字。
\tikzset{angleArrow/.style={-{Stealth[width=3.5pt, length=4.5pt]}, thick, black, shorten >=1pt, shorten <=1pt}}

% 1. Geometry Parameters
\coordinate (O) at (0,0);
\def\R{2.5}
\coordinate (C2) at (-1.2, 0);
\coordinate (C3) at (1.2, 0);
\def\rsmall{0.55}

% 2. Boundaries 
\draw[thick] (O) circle (\R);
\node at (0, \R+0.2) {\tiny${\partial_1}$};

\draw[thick, fill=white, pattern=north east lines, pattern color=gray!40] 
      (C2) circle (\rsmall);
\node at ($(C2)+(0, -\rsmall-0.25)$) {\tiny$\partial_2$};

\draw[thick, fill=white, pattern=north east lines, pattern color=gray!40] 
      (C3) circle (\rsmall);
\node at ($(C3)+(0, -\rsmall-0.25)$) {\tiny$\partial_3$};

% 3. Marked Points
\coordinate (G1) at (0, -\R); \coordinate (R1) at (0, \R);
\coordinate (G2) at ($(C2)+(0, \rsmall)$); \coordinate (R2) at ($(C2)+(0, -\rsmall)$);
\coordinate (G3) at ($(C3)+(0, \rsmall)$); \coordinate (R3) at ($(C3)+(0, -\rsmall)$);

% 4. Dissection Arcs
\draw[arcblue, thick] (G1) .. controls (-2.5, -2) and (-3.0, 1.8) .. (G2)
    coordinate[pos=0.11] (a1_start) coordinate[pos=0.92] (a1_end);
\node[arcblue] at (-2, 0.0) {\tiny{1}};

\draw[arcblue, thick] (G1) .. controls (-0.5, -1) and (0.3, 1.8) .. (G2)
    coordinate[pos=0.15] (a2_start) coordinate[pos=0.91] (a2_end) coordinate[pos=0.6] (a2_mid);
\node[arcblue, left] at (a2_mid) {\tiny{2}};

\draw[arcblue, thick] (G1) .. controls (0.5, -1.5) and (-0.3, 1.8) .. (G3)
    coordinate[pos=0.18] (a3_start) coordinate[pos=0.9] (a3_end) coordinate[pos=0.6] (a3_mid);
\node[arcblue, right] at (a3_mid) {\tiny{3}};

\draw[arcblue, thick] (G1) .. controls (2.5, -2) and (3.0, 1.8) .. (G3)
    coordinate[pos=0.12] (a4_start) coordinate[pos=0.92] (a4_end);  
\node[arcblue] at (2, 0.0) {\tiny{4}};

% 5. Quiver Arrows
\draw[angleArrow, thick, bend left=20] (a1_start) to node[above left, scale=0.8, xshift=0pt]{$\alpha$} (a2_start);
\draw[angleArrow, thick, bend left=40] (a1_end) to node[above, scale=0.8, yshift=2pt]{$\beta$} (a2_end);
\draw[angleArrow, thick, bend left=10] (a2_start) to node[above, scale=0.8, yshift=2pt]{$\theta$} (a3_start);
\draw[angleArrow, thick, bend left=20] (a3_start) to node[above right, scale=0.8, xshift=-5pt]{$\gamma$} (a4_start);
\draw[angleArrow, thick, bend left=40] (a3_end) to node[above, scale=0.8, yshift=2pt]{$\delta$} (a4_end);

% 6. Points [修改：G1, G2, G3 改为 arcblue (蓝色)]
\foreach \p in {G1, G2, G3} \fill[arcblue] (\p) circle (1.5pt);
\foreach \p in {R1, R2, R3} \fill[myred] (\p) circle (1.5pt);
\end{tikzpicture}
\hspace{1cm}
%
% ==================== 图 2：带橙色闭曲线 ====================
\begin{tikzpicture}[scale=1.1, >=Stealth]

% 1. Geometry Parameters
\coordinate (O) at (0,0);
\def\R{2.5}
\coordinate (C2) at (-1.2, 0);
\coordinate (C3) at (1.2, 0);
\def\rsmall{0.55}

% 2. Boundaries 
\draw[thick] (O) circle (\R);
\node at (0, \R+0.2) {\tiny${\partial_1}$};

\draw[thick, fill=white, pattern=north east lines, pattern color=gray!40] 
      (C2) circle (\rsmall);
\node at ($(C2)+(0, -\rsmall-0.25)$) {\tiny$\partial_2$};

\draw[thick, fill=white, pattern=north east lines, pattern color=gray!40] 
      (C3) circle (\rsmall);
\node at ($(C3)+(0, -\rsmall-0.25)$) {\tiny$\partial_3$};

% 3. Marked Points
\coordinate (G1) at (0, -\R); \coordinate (R1) at (0, \R);
\coordinate (G2) at ($(C2)+(0, \rsmall)$); \coordinate (R2) at ($(C2)+(0, -\rsmall)$);
\coordinate (G3) at ($(C3)+(0, \rsmall)$); \coordinate (R3) at ($(C3)+(0, -\rsmall)$);

% 4. Dissection Arcs
\draw[arcblue, thick] (G1) .. controls (-2.5, -2) and (-3.0, 1.8) .. (G2)
    coordinate[pos=0.11] (a1_start) coordinate[pos=0.92] (a1_end);
\node[arcblue] at (-2, 0.0) {\tiny{1}};

\draw[arcblue, thick] (G1) .. controls (-0.5, -1) and (0.3, 1.8) .. (G2)
    coordinate[pos=0.15] (a2_start) coordinate[pos=0.91] (a2_end) coordinate[pos=0.6] (a2_mid);
\node[arcblue, left] at (a2_mid) {\tiny{2}};

\draw[arcblue, thick] (G1) .. controls (0.5, -1.5) and (-0.3, 1.8) .. (G3)
    coordinate[pos=0.18] (a3_start) coordinate[pos=0.9] (a3_end) coordinate[pos=0.6] (a3_mid);
\node[arcblue, right] at (a3_mid) {\tiny{3}};

\draw[arcblue, thick] (G1) .. controls (2.5, -2) and (3.0, 1.8) .. (G3)
    coordinate[pos=0.12] (a4_start) coordinate[pos=0.92] (a4_end);  
\node[arcblue] at (2, 0.0) {\tiny{4}};

% 5. Quiver Arrows

% 角标箭头样式: 使用 Stealth 箭头，并指定较小的宽度使其变细
% width=2.5pt 控制宽度(变细), length=4.5pt 控制长度。可以根据需要微调这两个数字。
\tikzset{angleArrow/.style={-{Stealth[width=3.5pt, length=4.5pt]}, thick, black, shorten >=1pt, shorten <=1pt}}

\draw[angleArrow, thick, bend left=20] (a1_start) to node[above left, scale=0.8, xshift=0pt]{$\alpha$} (a2_start);
\draw[angleArrow, thick, bend left=40] (a1_end) to node[above, scale=0.8, yshift=2pt]{$\beta$} (a2_end);
\draw[angleArrow, thick, bend left=10] (a2_start) to node[above, scale=0.8, yshift=2pt]{$\theta$} (a3_start);
\draw[angleArrow, thick, bend left=20] (a3_start) to node[above right, scale=0.8, xshift=-5pt]{$\gamma$} (a4_start);
\draw[angleArrow, thick, bend left=40] (a3_end) to node[above, scale=0.8, yshift=2pt]{$\delta$} (a4_end);

% 6. Points [修改：G1, G2, G3 改为 arcblue (蓝色)]
\foreach \p in {G1, G2, G3} \fill[arcblue] (\p) circle (1.5pt);
\foreach \p in {R1, R2, R3} \fill[myred] (\p) circle (1.5pt);

% [NEW] Figure-8 Closed Curve
\draw[thick, orange, smooth] (0,0) 
    .. controls (0.6, 2) and (2, 2) .. (1.9, 0)    
    .. controls (1.8, -1.3) and (0.6, -1.3) .. (0,0)      
    .. controls (-0.6, 2) and (-2, 2) .. (-1.9, 0)
    .. controls (-1.8, -1.3) and (-0.6, -1.3) .. (0,0); 
\node[orange] at (-.3, 1.2) {$\ell$};
\end{tikzpicture}
\end{center}
There are only three simple closed curves on $S$, up to homotopy, which go around each of the three boundary components. We have $\omega_\eta(\partial_2)=|\za|-|\zb|=1$ and $\omega_\eta(\partial_3)=|\zg|-|\zd|=1$. By the Poincar\'e-Hopf index formula, we have $\omega_\eta(\partial_1)=-4$. Thus, the winding number of any simple closed curve is nonzero, and by Theorem \ref{thm:main1}, $\per(A)$ is silting-discrete. 

Now we consider the closed curve $\ell$ shown on the right in the above picture, which is a figure-eight curve encircling the two inner boundary components.
By decomposing it into four segments (\cite[Section 3.2]{JSW23}), we can calculate the winding number of $\ell$ as follows:
\[\omega_\eta(\ell)=(1-|\zb|-|\theta|)+(-|\zg|)+(|\zd|+|\theta|-1)+|\za|=0.\]
Thus, $\ell$ is a closed curve whose winding number is zero even though $\per(A)$ is silting-discrete. 
\end{example}

\subsection{The sufficient part of Theorem \ref{thm:main1}}

In this subsection, we show the sufficient part of the conditions in Theorem \ref{thm:main1}.

\begin{Lem}\label{lem:key3}
If $S$ is of genus zero and $\omega_\eta(\ell)\neq 0$ for any simple closed curve $\ell$, then $\per(A)$ is silting-discrete.
\end{Lem}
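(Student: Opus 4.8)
The plan is to apply the criterion of Proposition~\ref{Prop:equ-silting-discrete}(5): $\per(A)$ is silting-discrete as soon as the ordinary algebra $\End_{\per(A)}(P)$ is $\tau$-tilting finite for every silting object $P$. Since $A$ is homologically smooth and proper, every silting object of $\per(A)$ is realized by a graded admissible dissection $\Delta_P$ of the \emph{same} graded surface $(S,\eta)$, and under the induced derived equivalence $\per(A)\simeq\per(A(\Delta_P))$ one has $\End_{\per(A)}(P)\cong \h^0(A(\Delta_P))$, the degree-zero part of the graded gentle algebra $A(\Delta_P)$. The degree-zero part of a graded gentle algebra is again an (ungraded) gentle algebra, so by \cite{P19} the algebra $\h^0(A(\Delta_P))$ is $\tau$-tilting finite if and only if it is representation-finite, i.e. if and only if it admits no band. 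Thus the lemma reduces to the purely geometric statement: \emph{if $S$ has genus $0$ and carries no simple closed curve of winding number $0$, then $\h^0(A(\Delta))$ admits no band, for every graded dissection $\Delta$ of $(S,\eta)$.}

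I would prove this in contrapositive form. Assume $\h^0(A(\Delta))$ admits a band. By the minimal-band reduction for gentle algebras used in \cite{P19}, it then admits a \emph{minimal} band $\widehat w$, which (as $A(\Delta)$ has no special loops) is of one of the two forms in Definition~\ref{prop::minimal-asymmetric-band}(1)--(2). In either case $\widehat w$ is a cyclic reduced walk all of whose arrows are homogeneous of degree $0$; reading it on the surface produces a closed curve $\ell$ in minimal position with respect to $\Delta$. Using the description of the winding number in terms of the degrees of the letters of a cyclic word (\cite[Section~3.2]{JSW23}, as in the Kronecker computation $\omega_\eta=|\alpha|-|\gamma|$ appearing in the proof of Lemma~\ref{lem:key1}), the vanishing of all these degrees forces $\omega_\eta(\ell)=0$.

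It remains to replace $\ell$ by a \emph{simple} closed curve of winding number $0$. If $\widehat w$ is of type~(1), it is read off a quiver of type $\widetilde{\mathbb A}$, so $\ell$ is already an embedded (simple) closed curve and we are done. The genuinely delicate case, and the main obstacle, is the type~(2) band $\widehat w=uzu'z^{-1}$, whose curve $\ell$ is a figure-eight enclosing two subcollections $J_1,J_2$ of boundary components and hence is \emph{not} simple. Here the hypothesis that $\ell$ is an \emph{all-degree-zero} band, rather than merely a winding-zero curve, is essential: the Example following Lemma~\ref{lem:key2} exhibits a winding-zero figure-eight on a silting-discrete surface, but there the two lobes are traversed with arrows of nonzero degree. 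My plan is to smooth the unique self-intersection of $\ell$ in the orientation-compatible way, producing simple closed curves $\gamma_1,\gamma_2$ around $J_1,J_2$, and to show that the all-degree-zero readings of the sublobes $u,u'$ descend to all-degree-zero readings of $\gamma_1,\gamma_2$; since the winding number of a simple closed curve equals the (corrected) sum of the degrees of such a reading, this would give $\omega_\eta(\gamma_1)=\omega_\eta(\gamma_2)=0$, contradicting the hypothesis. Carrying out this winding-number bookkeeping---tracking the turning contribution created at the smoothed crossing and checking, using the genus-zero identity $\sum_i m_i=4$ coming from the Poincar\'e--Hopf formula \eqref{eq:PH}, that it cannot absorb the degree $0$ of the lobes---is the crux of the argument.

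Granting the geometric statement, the proof concludes at once: if no simple closed curve of $S$ has winding number $0$, then $\h^0(A(\Delta_P))$ is band-free, hence representation-finite and $\tau$-tilting finite for every silting object $P$, and therefore $\per(A)$ is silting-discrete by Proposition~\ref{Prop:equ-silting-discrete}.
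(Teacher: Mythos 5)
Your overall strategy is the same as the paper's: use Proposition \ref{Prop:equ-silting-discrete} together with \cite{P19} to reduce the lemma to showing that the degree-zero gentle algebra of every dissection realizing a silting object admits no band, and then turn a band into a simple closed curve of winding number zero. Your treatment of the type-$\widetilde{\mathbb{A}}_m$ band also matches the paper's Case 1. The gap is in the second case, the band $\widehat w=uzu'z^{-1}$ read from the ``dumbbell'' quiver of Definition \ref{prop::minimal-asymmetric-band}(2). You assert that the associated closed curve is a non-simple figure-eight and propose to repair this by smoothing its self-crossing into two simple curves $\gamma_1,\gamma_2$ around the individual lobes and proving $\omega_\eta(\gamma_1)=\omega_\eta(\gamma_2)=0$; you then explicitly defer the required winding-number bookkeeping as ``the crux of the argument''. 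That deferred step is both missing and aimed at the wrong target. A regular neighbourhood of the dumbbell is a pair of pants, and of the two homotopy bands one can read from it, $w_lww_rw^{-1}$ and $w_lww_r^{-1}w^{-1}$, the first is freely homotopic to the \emph{outer} boundary component of that pair of pants and is therefore already a \emph{simple} closed curve; this is exactly what the paper establishes in its Case 2, and its remark following the proof points out that it is the \emph{other} traversal which yields the non-simple figure-eight. So no smoothing is needed at all --- one only has to traverse the second cycle in the correct direction.

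Worse, the statement you would need after smoothing, namely that each lobe separately has winding number zero, is not what follows from the hypotheses and is false in general: the winding number of a simple closed curve around a single cycle of the quiver is not the sum of the degrees of its arrows but carries a combinatorial correction counting changes of direction (compare the computation $\omega_\eta(\ell)=|\alpha_0|+\cdots+|\alpha_{n-1}|-r$ preceding Corollary \ref{Cor:deriveddiscreteimpliessiltingdiscrete}, and the example following Lemma \ref{lem:key2}, where a winding-zero figure-eight has lobes of winding number $1$). The quantity that vanishes for an all-degree-zero homotopy band is the winding number of the single closed curve traced by the entire band, where the alternation of direct and inverse homotopy letters makes the corrections cancel; that is the curve the paper uses. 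Until you either switch to the traversal $w_lww_rw^{-1}$ and verify its simplicity directly (as the paper does with its pair-of-pants picture), or actually carry out and correct the smoothing computation, Case 2 remains unproved.
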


\begin{proof}
If $\per(A)$ is not silting-discrete, then by Proposition \ref{Prop:equ-silting-discrete}, there exists a silting object $P\in \per(A)$ such that $\End_{\per(A)}(P)$ is $\tau$-tilting infinite, and hence representation-infinite by \cite{P19}. 
Without loss of generality, we may assume that $A$ itself is such a silting object in $\per(A)$; that is, $A$ is non-positive and $\h^0(A)$ is representation-infinite. 

Since $A$ is non-positive, $\h^0(A)$ is given by the subquiver $\tilde{Q}$ of $Q$ consisting of arrows with grading zero. Since $\h^0(A)$ is representation-infinite, by \cite[Lemma 4.1]{P19}, $\tilde{Q}$, and hence $Q$, either contains a subquiver of the form $\tilde{A}_m$:
\begin{equation} \label{eq:typeI}
\vcenter{
\xymatrix@C=2pc@R=1pc{
    % --- 第一行 (Top Branch) ---
    & \bullet \ar@{--}[r] 
    & \cdots \ar@{--}[r] 
    & \bullet \ar@{-}[dr] 
    & \\
    % --- 第二行 (Start & End) ---
    \bullet \ar@{-}[ur] \ar@{-}[dr] 
    & & & 
    & \bullet \\
    % --- 第三行 (Bottom Branch) ---
    & \bullet \ar@{--}[r] 
    & \cdots \ar@{--}[r] 
    & \bullet \ar@{-}[ur] 
    &
}
}
\end{equation}
or contains a subquiver of the form:
\begin{equation} \label{eq:typeII}
\vcenter{
\xymatrix@C=2pc@R=1.5pc{
    % ==================== Row 1 ====================
    % [Left Cycle] Top-left node -> dots -> Top-right node of left cycle
    \bullet \ar@{-}[d] \ar@{-}[r] 
    & \cdots \ar@{-}[r]^-{\alpha_{r-1}} 
    & \bullet \ar[d]^-{\alpha_r}="AR" % Named this arrow "AR"
    % [Middle] Spacers
    & & & 
    % [Right Cycle] Top-left node -> dots -> Top-right node
    \bullet \ar@{-}[r]^-{\gamma_2} 
    & \cdots \ar@{-}[r] 
    & \bullet \ar@{-}[d] 
    \\
    % ==================== Row 2 ====================
    % [Left Cycle] Left vertical -> Alpha_1 arrow (Named "A1") -> Beta_1 start
    \bullet \ar@{-}[d] 
    & 
    & \bullet \ar[d]^-{\alpha_1}="A1" \ar[r]^-{\beta_1} 
    % [Middle Chain] Beta arrows
    & \bullet \ar[r]^-{\beta_2} 
    & \cdots \ar[r]^-{\beta_s} 
    % [Right Cycle] Gamma_1 arrow (Named "G1") -> Right vertical
    & \bullet \ar[u]^-{\gamma_1}="G1" 
    & 
    & \bullet \ar@{-}[d] 
    \\
    % ==================== Row 3 ====================
    % [Left Cycle] Bottom-left -> dots -> Bottom-right of left cycle (Alpha_2)
    \bullet 
    & \cdots \ar@{-}[l] 
    & \bullet \ar@{-}[l]^-{\alpha_2} 
    % [Middle] Spacers
    & & & 
    % [Right Cycle] Gamma_t arrow (Named "GT") -> Gamma_{t-1} -> Corner Arrow
    \bullet \ar[u]^-{\gamma_t}="GT" 
    & \cdots \ar@{-}[l]^-{\gamma_{t-1}} 
    & \bullet \ar@{-}[l] 
    % ==================== Relations (Dotted Lines) ====================
    % Connects the MIDPOINT of arrow "AR" to the MIDPOINT of arrow "A1"
    % @/_1.5pc/ curves the line to the left (outer side)
    \ar@{..}@/_1.5pc/ "AR";"A1"
    %
    % Connects the MIDPOINT of arrow "GT" to the MIDPOINT of arrow "G1"
    % @/_1.5pc/ curves the line to the right (outer side, since arrows go up)
    \ar@{..}@/_1.5pc/ "GT";"G1"
}
}
\end{equation}

\emph{Case 1:} $Q$ contains a subquiver of the form \eqref{eq:typeI}.
We traverse the subquiver $\tilde{A}_m$ in a clockwise direction starting from a source or sink vertex. We decompose the cycle into a concatenation of homotopy letters $\sigma = \sigma_1 \sigma_2 \cdots \sigma_t$, where each $\sigma_i$ is a maximal sequence of consecutive arrows oriented in the same direction (either following or opposing the clockwise orientation).
By construction, the sequence $\{\sigma_i\}_{1 \le i \le t}$ strictly alternates between direct and inverse homotopy letters. Thus, the number of direct homotopy letters equals the number of inverse homotopy letters, and $\sigma$ is a homotopy band (see \cite[Definition 2.1]{OPS18}). This implies that there exists a closed curve $\ell$ on the surface $S$ whose winding number is zero corresponding to $\sigma$. 

We can read $\sigma$ on the surface as follows. Each homotopy letter $\sigma_i$ corresponds to a segment of $\ell$ passing through the polygon(s) cut by the dual dissection $\Delta^*$. Note that $\ell$ is simple, because otherwise $\ell$ would have a self-intersection point, as shown in the following figure. Then the two corresponding homotopy letters $\sigma_i$ and $\sigma_j$ share at least one common arrow, which contradicts to the fact that  $\sigma$ is given by a quiver of type $\tilde{A}_m$. Thus $\ell$ is a simple closed curve whose winding number is zero, contradicting our assumption.

\begin{center}
\begin{tikzpicture}[scale=1.2, >=Stealth]

    % ==================== Settings & Styles ====================
    %  Defined arcblue and removed unused mygreen
    \definecolor{arcblue}{RGB}{0, 0, 255}
    \definecolor{myred}{RGB}{200, 0, 0}

    \tikzset{
        dot/.style={circle, fill, inner sep=1.5pt},
        redDot/.style={dot, fill=myred},
        %  Changed greenDot to blueDot style
        blueDot/.style={dot, fill=arcblue},
        dashedEdge/.style={thick, dashed, myred},
        solidEdge/.style={thick},
        % Arrow reversed (<-)
        angleArrow/.style={<-, thick, myred, shorten >=1pt, shorten <=1pt},
        gammaCurve/.style={thick, orange, smooth}
    }

    % ==================== 1. Coordinates ====================
    % Top
    \coordinate (TL) at (-0.8, 1.5);
    \coordinate (TR) at (0.8, 1.5);
    \coordinate (Top_L) at (-0.35, 1.7);
    \coordinate (Top_R) at (0.35, 1.7);
    % Side
    \coordinate (ML) at (-1.3, 0.4);
    \coordinate (MR) at (1.3, 0.4);
    % Bottom
    \coordinate (BL) at (-0.5, -0.4);
    \coordinate (BR) at (0.5, -0.4);
    \coordinate (BMid) at ($(BL)!0.5!(BR)$);

    % ==================== 2. Edges ====================
    % Sides (Red Solid)
    \draw[solidEdge, myred] (ML) -- (TL);
    \draw[solidEdge, myred] (MR) -- (TR);
    
    % Top (Red Solid & Dashed)
    \draw[solidEdge, myred] (TL) -- (Top_L);
    \draw[dashedEdge] (Top_L) -- (Top_R);
    \draw[solidEdge, myred] (Top_R) -- (TR);
    
    % Bottom (Black Solid & Red Dashed)
    \draw[solidEdge] (BR) -- (BL);
    \draw[dashedEdge] (MR) -- (BR);
    \draw[dashedEdge] (BL) -- (ML);

    % ==================== 3. Vertices ====================
    \node[redDot] at (TL) {};
    \node[redDot] at (TR) {};
    % [Modified]: Changed from greenDot to blueDot
    \node[blueDot] at (BMid) {};

    % ==================== 4. Angle Arrows ====================
    \def\posRatio{0.3}
    
    % Left Arrow
    \coordinate (Arr_L_Side) at ($(TL)!\posRatio!(ML)$);
    \coordinate (Arr_L_Top)  at ($(TL)!\posRatio + 0.1!(Top_L)$);
    \draw[angleArrow] (Arr_L_Side) to[bend right=25] (Arr_L_Top);

    % Right Arrow
    \coordinate (Arr_R_Top)  at ($(TR)!\posRatio + 0.1!(Top_R)$);
    \coordinate (Arr_R_Side) at ($(TR)!\posRatio!(MR)$);
    \draw[angleArrow] (Arr_R_Top) to[bend right=25] (Arr_R_Side);

    % ==================== 5. Gamma Curves ====================
    % Curve 1 (Sigma i)
    \draw[gammaCurve] (-1.8, 1.1) .. controls (-0.2, 1.1) and (0.2, 0.6) .. (1.8, 0.5)
        node[pos=0.35, above] {$\sigma_i$}
        node[right] {$\ell$};

    % Curve 2 (Sigma j)
    \draw[gammaCurve] (1.8, 1.1) .. controls (0.2, 1.1) and (-0.2, 0.6) .. (-1.8, 0.6)
        node[pos=0.31, above,yshift=-2pt,xshift=-1.5pt] {$\sigma_j$}
        node[left] {$\ell$};

\end{tikzpicture}
\end{center}

\emph{Case 2}: $Q$ contains a subquiver of the form \eqref{eq:typeII}. 
Let $w_l$ be the string corresponding to the cycle on the left starting at $\za_1$ and ending at $\za_r$. Similarly, let $w_r$ be the string corresponding to the cycle on the right starting at $\zg_1$ and ending at $\zg_t$.
Let $w:=\zb_1\zb_2\cdots \zb_s$ be the string corresponding to the path in the middle. Then we get a generalized walk $w_l w w_rw^{-1}$ from the vertex $s(\za_1)$ to itself. 

Observe that for any direct (resp. inverse) subwalk $\tilde{w}$ of $w_l w w_rw^{-1}$, we have $\tilde{w} \not\in I$ (resp. $\tilde{w}^{-1}\not\in I$) by construction. Thus, we can decompose $w_l w w_rw^{-1}$ into a concatenation of homotopy letters $w_l w w_rw^{-1} = \sigma_1 \sigma_2 \cdots \sigma_m$, such that $\sigma_1,\dots, \sigma_m$ strictly alternate between direct and inverse. Therefore, the number of direct homotopy letters equals the number of inverse homotopy letters, and thus $w_l w w_rw^{-1}$ is a homotopy band.
This implies that there exists a closed curve $\ell$ on the surface $S$ whose winding number is zero corresponding to $w_l w w_rw^{-1}$. 
In fact, by the correspondence between homotopy letters of $w_l w w_rw^{-1}$ and segments of $\ell$, we see that $\ell$ is a simple closed curve, as shown in the following picture, note that here we use the $\rpoint$-dissection, therefore the direction of the arrows is opposite to the direction of the arrows in \eqref{eq:typeII}. This also contradicts our assumption. 
\begin{center}
\begin{tikzpicture}[scale=1.5, >=Stealth]
    % ==================== 0. 参数设置 ====================
    % 左右图之间的间距
    \def\GraphDist{3.5} 

    % ==================== 1. 样式与宏定义 ====================
    \definecolor{myred}{RGB}{200, 0, 0}
    
    % 基础样式
    \tikzset{redDot/.style={circle, fill=myred, inner sep=1.5pt}}
    \tikzset{redLine/.style={thick, myred}}
    \tikzset{hole/.style={thick, draw=black, fill=white, pattern=north east lines, pattern color=gray!40}}
        
% 角标箭头样式: 使用 Stealth 箭头，并指定较小的宽度使其变细
% width=2.5pt 控制宽度(变细), length=4.5pt 控制长度。可以根据需要微调这两个数字。
\tikzset{angleArrow/.style={-{Stealth[width=3.5pt, length=4.5pt]}, thick, black, shorten >=1pt, shorten <=1pt}}
    
    % 边界标记宏
    \def\BoundaryMark#1#2{
        \begin{scope}[shift={(#1)}, rotate=#2]
            \draw[thick, black] (-0.3, 0.1) to[bend right=20] (0.3, 0.06);
            \foreach \x in {-0.2, -0.05, 0.1, 0.25} {
                \draw[thin, dashed] (\x, 0.04) -- (\x, 0.25);
            }
        \end{scope}
    }

    % 绘制单个图形单元的宏 (#1:前缀, #2:偏移)
    \def\DrawGraphUnit#1#2{
        \begin{scope}[shift={(#2, 0)}]
            \coordinate (#1-O) at (0,0);
            \def\R{0.4}
            \coordinate (#1-P3) at (0, \R);
            \coordinate (#1-P4) at (0, -\R);
            \coordinate (#1-P1) at (1, 1.5);
            \coordinate (#1-P2) at (-1, 1.5);
            \coordinate (#1-P5) at (0, -1.8);

            % 绘制组件
            \BoundaryMark{#1-P1}{-45}
            \BoundaryMark{#1-P2}{45}
            \BoundaryMark{#1-P5}{180}
            \draw[hole] (#1-O) circle (\R);
            \draw[redLine] (#1-P3) to[bend left=20] (#1-P1);
            \draw[redLine] (#1-P3) to[bend right=20] (#1-P2);
            \draw[redLine] (#1-P4) to[bend right=80] (#1-P1);
            \draw[redLine] (#1-P4) to[bend left=80] (#1-P2);
            \draw[redLine] (#1-P4) to[bend right=0] (#1-P5);
            
            % [新增] 顶部 1和2 之间的省略号
            \path (#1-P2) -- (#1-P1) node[midway, above=5pt, font=\bfseries] {$\cdots$};
        \end{scope}
    }

    % ==================== 2. 绘制基础图形 ====================
    \DrawGraphUnit{L}{0}
    \DrawGraphUnit{R}{\GraphDist}

    % ==================== 2.5. 绘制新增的中间元素 ====================
    % 定义点6
    \coordinate (P6) at ($(L-P5)!0.5!(R-P5)$);
    \BoundaryMark{P6}{180}

    % 绘制新增弧线
    \draw[redLine] (L-P5) to[bend left=30] coordinate[midway] (L_arc_mid) ($(L-P5)+(0.6, 0.3)$);
    \draw[redLine] (R-P5) to[bend right=30] coordinate[midway] (R_arc_mid) ($(R-P5)+(-0.6, 0.3)$);
    \draw[redLine] (P6) to[bend right=20] coordinate[midway] (P6_left_mid) ($(P6)+(-0.4, 0.3)$);
    \draw[redLine] (P6) to[bend left=20] coordinate[midway] (P6_right_mid) ($(P6)+(0.4, 0.3)$);

    % 底部连接处的省略号
    \coordinate (Ellipsis_L_Pos) at ($(L-P5)!0.5!(P6) + (0, 0.4)$);
    \node at (Ellipsis_L_Pos) {$\cdots$};
    \coordinate (Ellipsis_R_Pos) at ($(P6)!0.5!(R-P5) + (0, 0.4)$);
    \node at (Ellipsis_R_Pos) {$\cdots$};

    % ==================== 2.6. 绘制角标箭头 (Arrows) ====================
    
    % ---------- 右侧 (Unit R) ----------
    \coordinate (R4_dir1) at ($(R-P4)+(0.3, 0)$);  
    \coordinate (R4_dir5) at ($(R-P4)+(0, -0.3)$); 
    \coordinate (R4_dir2) at ($(R-P4)+(-0.25, 0.01)$); 
    
    \draw[angleArrow] (R4_dir1) to[bend left=30] node[right, text=black, font=\small] {$\alpha_1$} (R4_dir5);
    \draw[angleArrow] (R4_dir5) to[bend left=30] node[left, text=black, font=\small] {$\alpha_r$} (R4_dir2);

    \coordinate (R5_dirNew) at ($(R-P5)+(-0.38, 0.25)$); 
    \coordinate (R5_dir4) at ($(R-P5)+(0.03, 0.4)$);     
    \draw[angleArrow] (R5_dirNew) to[bend left=30] node[above left, text=black, font=\small] {$\beta_1$} (R5_dir4);

    \coordinate (R1_dir4) at ($(R-P1)+(0.2, -0.3)$); 
    \coordinate (R1_dir3) at ($(R-P1)+(-0.3, -0.15)$); 
    \draw[angleArrow] (R1_dir4) to[bend left=30] (R1_dir3);

    \coordinate (R3_dir2) at ($(R-P3)+(-0.15, 0.3)$); 
    \coordinate (R3_dir1) at ($(R-P3)+(0.17, 0.3)$);  
    \draw[angleArrow] (R3_dir2) to[bend left=30] (R3_dir1);

    \coordinate (R2_dir3) at ($(R-P2)+(0.2, -0.1)$);   
    \coordinate (R2_dir4) at ($(R-P2)+(-0.2, -0.3)$);  
    \draw[angleArrow] (R2_dir3) to[bend left=30] (R2_dir4);

    % ---------- 左侧 (Unit L) ----------
    \coordinate (L4_dir1) at ($(L-P4)+(-0.3, 0)$);  
    \coordinate (L4_dir5) at ($(L-P4)+(0, -0.3)$); 
    \coordinate (L4_dir2) at ($(L-P4)+(0.25, 0.01)$); 

    \draw[angleArrow] (L4_dir5) to[bend left=30] node[left, text=black, font=\small] {$\gamma_t$} (L4_dir1);
    \draw[angleArrow] (L4_dir2) to[bend left=30] node[right, text=black, font=\small] {$\gamma_1$} (L4_dir5);

    \coordinate (L5_dirNew) at ($(L-P5)+(0.38, 0.25)$); 
    \coordinate (L5_dir4) at ($(L-P5)+(-0.03, 0.4)$);   
    \draw[angleArrow] (L5_dir4) to[bend left=30] node[above right, text=black, font=\small] {$\beta_s$} (L5_dirNew);

    \coordinate (L2_dir4) at ($(L-P2)+(-0.2, -0.3)$);  
    \coordinate (L2_dir3) at ($(L-P2)+(0.3, -0.15)$);  
    \draw[angleArrow] (L2_dir3) to[bend left=30] (L2_dir4);

    \coordinate (L3_dir2) at ($(L-P3)+(0.16, 0.3)$);   
    \coordinate (L3_dir1) at ($(L-P3)+(-0.17, 0.3)$);   
    \draw[angleArrow] (L3_dir1) to[bend left=30] (L3_dir2);

    \coordinate (L1_dir3) at ($(L-P1)+(-0.2, -0.1)$);  
    \coordinate (L1_dir4) at ($(L-P1)+(0.2, -0.3)$);   
    \draw[angleArrow] (L1_dir4) to[bend left=30] (L1_dir3);

    % ---------- 中间 P6: beta_l ----------
    \coordinate (P6_arrow_start) at ($(P6)+(-0.2, 0.2)$); 
    \coordinate (P6_arrow_end) at ($(P6)+(0.2, 0.2)$);   
    \draw[angleArrow] (P6_arrow_start) to[bend left=20] node[above, text=black, font=\small] {$\beta_l$} (P6_arrow_end);

    % ==================== 2.9 重绘红点 ====================
    \node[redDot] at (L-P3) {}; \node[redDot] at (L-P4) {}; \node[redDot] at (L-P1) {}; \node[redDot] at (L-P2) {}; \node[redDot] at (L-P5) {};
    \node[redDot] at (R-P3) {}; \node[redDot] at (R-P4) {}; \node[redDot] at (R-P1) {}; \node[redDot] at (R-P2) {}; \node[redDot] at (R-P5) {};
    \node[redDot] at (P6) {};

    % ==================== 3. 绘制黄色闭曲线 ====================
    \draw[thick, orange] 
        ($(L-O)+(0, 0.9)$) -- ($(R-O)+(0, 0.9)$) 
        to[out=0, in=90] ($(R-O)+(1.0, 0)$)      
        to[out=270, in=0] ($(R-O)+(0, -0.9)$)    
        % 在底部直线段中间添加节点
        -- node[above, text=orange] {$\ell$} ($(L-O)+(0, -0.9)$)                   
        to[out=180, in=270] ($(L-O)+(-1.0, 0)$)  
        to[out=90, in=180] ($(L-O)+(0, 0.9)$);   

\end{tikzpicture}
\end{center}
So in both cases, we reach a contradiction. Therefore, $\per(A)$ must be silting-discrete.
\end{proof}
\begin{Rem}
If $Q$ contains a subquiver of the form \eqref{eq:typeII}, the generalized walk $w_l w w_r^{-1}w^{-1}$ is also a homotopy band and hence gives another closed curve $\ell'$ with zero winding number. Note that $\ell'$ is not a simple closed curve, as shown in the figure below.
\begin{center}
\begin{tikzpicture}[scale=1.5, >=Stealth]
    % ==================== 0. 参数设置 ====================
    % 左右图之间的间距
    \def\GraphDist{3.5} 

    % ==================== 1. 样式与宏定义 ====================
    \definecolor{myred}{RGB}{200, 0, 0}
    
    % 基础样式
    \tikzset{redDot/.style={circle, fill=myred, inner sep=1.5pt}}
    \tikzset{redLine/.style={thick, myred}}
    \tikzset{hole/.style={thick, draw=black, fill=white, pattern=north east lines, pattern color=gray!40}}
        
% 角标箭头样式: 使用 Stealth 箭头，并指定较小的宽度使其变细
% width=2.5pt 控制宽度(变细), length=4.5pt 控制长度。可以根据需要微调这两个数字。
\tikzset{angleArrow/.style={-{Stealth[width=3.5pt, length=4.5pt]}, thick, black, shorten >=1pt, shorten <=1pt}}
    
    % 边界标记宏
    \def\BoundaryMark#1#2{
        \begin{scope}[shift={(#1)}, rotate=#2]
            \draw[thick, black] (-0.3, 0.1) to[bend right=20] (0.3, 0.06);
            \foreach \x in {-0.2, -0.05, 0.1, 0.25} {
                \draw[thin, dashed] (\x, 0.04) -- (\x, 0.25);
            }
        \end{scope}
    }

    % 绘制单个图形单元的宏 (#1:前缀, #2:偏移)
    \def\DrawGraphUnit#1#2{
        \begin{scope}[shift={(#2, 0)}]
            \coordinate (#1-O) at (0,0);
            \def\R{0.4}
            \coordinate (#1-P3) at (0, \R);
            \coordinate (#1-P4) at (0, -\R);
            \coordinate (#1-P1) at (1, 1.5);
            \coordinate (#1-P2) at (-1, 1.5);
            \coordinate (#1-P5) at (0, -1.8);

            % 绘制组件
            \BoundaryMark{#1-P1}{-45}
            \BoundaryMark{#1-P2}{45}
            \BoundaryMark{#1-P5}{180}
            \draw[hole] (#1-O) circle (\R);
            \draw[redLine] (#1-P3) to[bend left=20] (#1-P1);
            \draw[redLine] (#1-P3) to[bend right=20] (#1-P2);
            \draw[redLine] (#1-P4) to[bend right=80] (#1-P1);
            \draw[redLine] (#1-P4) to[bend left=80] (#1-P2);
            \draw[redLine] (#1-P4) to[bend right=0] (#1-P5);
            
            % [新增] 顶部 1和2 之间的省略号
            \path (#1-P2) -- (#1-P1) node[midway, above=5pt, font=\bfseries] {$\cdots$};
        \end{scope}
    }

    % ==================== 2. 绘制基础图形 ====================
    \DrawGraphUnit{L}{0}
    \DrawGraphUnit{R}{\GraphDist}

    % ==================== 2.5. 绘制新增的中间元素 ====================
    % 定义点6
    \coordinate (P6) at ($(L-P5)!0.5!(R-P5)$);
    \BoundaryMark{P6}{180}

    % 绘制新增弧线
    \draw[redLine] (L-P5) to[bend left=30] coordinate[midway] (L_arc_mid) ($(L-P5)+(0.6, 0.3)$);
    \draw[redLine] (R-P5) to[bend right=30] coordinate[midway] (R_arc_mid) ($(R-P5)+(-0.6, 0.3)$);
    \draw[redLine] (P6) to[bend right=20] coordinate[midway] (P6_left_mid) ($(P6)+(-0.4, 0.3)$);
    \draw[redLine] (P6) to[bend left=20] coordinate[midway] (P6_right_mid) ($(P6)+(0.4, 0.3)$);

    % 底部连接处的省略号
    \coordinate (Ellipsis_L_Pos) at ($(L-P5)!0.5!(P6) + (0, 0.4)$);
    \node at (Ellipsis_L_Pos) {$\cdots$};
    \coordinate (Ellipsis_R_Pos) at ($(P6)!0.5!(R-P5) + (0, 0.4)$);
    \node at (Ellipsis_R_Pos) {$\cdots$};

    % ==================== 2.6. 绘制角标箭头 (Arrows) ====================
    
    % ---------- 右侧 (Unit R) ----------
    \coordinate (R4_dir1) at ($(R-P4)+(0.3, 0)$);  
    \coordinate (R4_dir5) at ($(R-P4)+(0, -0.3)$); 
    \coordinate (R4_dir2) at ($(R-P4)+(-0.25, 0.01)$); 
    
    \draw[angleArrow] (R4_dir1) to[bend left=30] node[right, text=black, font=\small] {$\alpha_1$} (R4_dir5);
    \draw[angleArrow] (R4_dir5) to[bend left=30] node[left, text=black, font=\small] {$\alpha_r$} (R4_dir2);

    \coordinate (R5_dirNew) at ($(R-P5)+(-0.38, 0.25)$); 
    \coordinate (R5_dir4) at ($(R-P5)+(0.03, 0.4)$);     
    \draw[angleArrow] (R5_dirNew) to[bend left=30] node[above left, text=black, font=\small] {$\beta_1$} (R5_dir4);

    \coordinate (R1_dir4) at ($(R-P1)+(0.2, -0.3)$); 
    \coordinate (R1_dir3) at ($(R-P1)+(-0.3, -0.15)$); 
    \draw[angleArrow] (R1_dir4) to[bend left=30] (R1_dir3);

    \coordinate (R3_dir2) at ($(R-P3)+(-0.15, 0.3)$); 
    \coordinate (R3_dir1) at ($(R-P3)+(0.17, 0.3)$);  
    \draw[angleArrow] (R3_dir2) to[bend left=30] (R3_dir1);

    \coordinate (R2_dir3) at ($(R-P2)+(0.2, -0.1)$);   
    \coordinate (R2_dir4) at ($(R-P2)+(-0.2, -0.3)$);  
    \draw[angleArrow] (R2_dir3) to[bend left=30] (R2_dir4);

    % ---------- 左侧 (Unit L) ----------
    \coordinate (L4_dir1) at ($(L-P4)+(-0.3, 0)$);  
    \coordinate (L4_dir5) at ($(L-P4)+(0, -0.3)$); 
    \coordinate (L4_dir2) at ($(L-P4)+(0.25, 0.01)$); 

    \draw[angleArrow] (L4_dir5) to[bend left=30] node[left, text=black, font=\small] {$\gamma_t$} (L4_dir1);
    \draw[angleArrow] (L4_dir2) to[bend left=30] node[right, text=black, font=\small] {$\gamma_1$} (L4_dir5);

    \coordinate (L5_dirNew) at ($(L-P5)+(0.38, 0.25)$); 
    \coordinate (L5_dir4) at ($(L-P5)+(-0.03, 0.4)$);   
    \draw[angleArrow] (L5_dir4) to[bend left=30] node[above right, text=black, font=\small] {$\beta_s$} (L5_dirNew);

    \coordinate (L2_dir4) at ($(L-P2)+(-0.2, -0.3)$);  
    \coordinate (L2_dir3) at ($(L-P2)+(0.3, -0.15)$);  
    \draw[angleArrow] (L2_dir3) to[bend left=30] (L2_dir4);

    \coordinate (L3_dir2) at ($(L-P3)+(0.16, 0.3)$);   
    \coordinate (L3_dir1) at ($(L-P3)+(-0.17, 0.3)$);   
    \draw[angleArrow] (L3_dir1) to[bend left=30] (L3_dir2);

    \coordinate (L1_dir3) at ($(L-P1)+(-0.2, -0.1)$);  
    \coordinate (L1_dir4) at ($(L-P1)+(0.2, -0.3)$);   
    \draw[angleArrow] (L1_dir4) to[bend left=30] (L1_dir3);

    % ---------- 中间 P6: beta_l ----------
    \coordinate (P6_arrow_start) at ($(P6)+(-0.2, 0.2)$); 
    \coordinate (P6_arrow_end) at ($(P6)+(0.2, 0.2)$);   
    \draw[angleArrow] (P6_arrow_start) to[bend left=20] node[above, text=black, font=\small] {$\beta_l$} (P6_arrow_end);

    % ==================== 2.9 重绘红点 ====================
    \node[redDot] at (L-P3) {}; \node[redDot] at (L-P4) {}; \node[redDot] at (L-P1) {}; \node[redDot] at (L-P2) {}; \node[redDot] at (L-P5) {};
    \node[redDot] at (R-P3) {}; \node[redDot] at (R-P4) {}; \node[redDot] at (R-P1) {}; \node[redDot] at (R-P2) {}; \node[redDot] at (R-P5) {};
    \node[redDot] at (P6) {};

    % ==================== 3. 绘制黄色闭曲线 (8字形) ====================
    % 定义交叉中心点
    \coordinate (Center) at ($(L-O)!0.5!(R-O)$);

    \draw[thick, orange] 
        % --- 左半部分环绕 ---
        ($(L-O)+(0, 0.9)$)                      % 从左图上方开始
        to[out=0, in=135] (Center)              % 向右下弯曲穿过中心
        to[out=-45, in=180] ($(R-O)+(0, -0.9)$) % 穿过中心后到达右图下方

        % --- 右半部分环绕 ---
        to[out=0, in=270] ($(R-O)+(1.0, 0)$) node[right, text=orange] {$\ell'$} % 绕过右侧 (标记 gamma')
        to[out=90, in=0] ($(R-O)+(0, 0.9)$)     % 到达右图上方

        % --- 回归交叉 ---
        to[out=180, in=45] (Center)             % 向左下弯曲穿过中心
        to[out=225, in=0] ($(L-O)+(0, -0.9)$)   % 穿过中心后到达左图下方

        % --- 闭合 ---
        to[out=180, in=270] ($(L-O)+(-1.0, 0)$) % 绕过左侧
        to[out=90, in=180] ($(L-O)+(0, 0.9)$);  % 回到起点

\end{tikzpicture}
\end{center}
\end{Rem}

\begin{proof}[Proof of Theorem \ref{thm:main1}] Now our result follows directly from Lemmas \ref{lem:key1}, \ref{lem:key2} and \ref{lem:key3}.  
\end{proof}

Assume that the graded surface model $(S,\zD,\eta)$ of $A$ has $b$ boundary components $\partial_1, \partial_2, \ldots, \partial_b$. %Let $n_i$ be the winding number $\omega_\eta(\partial_i)$ of the boundary component $\partial_i$ for each $1\le i \le b$. 
Let $m_i=\omega_\eta(\partial_i)+2$ for each $1\le i \le b$. Then by the Poincar\'e-Hopf index formula \eqref{eq:PH}, we have
\begin{equation*}
    \sum_{i=1}^b m_i = 4-4g. 
\end{equation*}
In particular, if $S$ is of genus $0$, then $\sum_{i=1}^b m_i=4$ always holds. 
We show the following corollary, which states that the silting-discreteness of $\per(A)$ can be characterized by the numbers $m_i$.
%We have the following corollary of Theorem \ref{thm:main1}.
\begin{corollary}\label{Cor:siltingdiscretegentle}
    Let $A$ be a homologically smooth, proper graded gentle algebra and let $(S,\eta,\Delta)$ be the associated graded surface. Then the following are equivalent.
    \begin{enumerate}
        \item $\per(A)$ is silting-discrete.
        \item $S$ is of genus $0$ and $m_i$ defined above satisfies  the `No Equipartition' condition
        \[ \sum_{j\in J}m_j\not=\frac{1}{2}\sum_{j=1}^bm_j  \] 
        for all $J\subset\{1,2,\ldots,b\}$.
    \end{enumerate}
\end{corollary}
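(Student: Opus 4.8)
The plan is to deduce the corollary directly from Theorem \ref{thm:main1}, which already identifies silting-discreteness of $\per(A)$ with the conjunction of two conditions: $S$ has genus $0$, and every simple closed curve $\ell$ on $S$ satisfies $\omega_\eta(\ell)\neq 0$. Thus, once genus $0$ is assumed, the whole content of the corollary is to translate this winding-number condition into the combinatorial No Equipartition condition on the $m_i$. I would first record the normalization coming from the Poincar\'e--Hopf formula \eqref{eq:PH}: in genus $0$ one has $\sum_{i=1}^b m_i=4$, so that $\tfrac12\sum_{i=1}^b m_i=2$ and the No Equipartition condition reads simply $\sum_{j\in J}m_j\neq 2$ for every $J\subseteq\{1,\ldots,b\}$.

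Next I would analyze the simple closed curves on a genus-zero surface with boundary. Such a surface is planar, so any embedded simple closed curve $\ell$ is separating and cuts $S$ into two genus-zero pieces; labelling the boundary components lying on one side by a subset $J\subseteq\{1,\ldots,b\}$ determines $\ell$ up to homotopy, with $J$ and its complement $J^c$ giving the same curve. Conversely, every subset $J$ is realized by a simple closed curve enclosing exactly the boundary components indexed by $J$ (singletons give boundary-parallel curves, and the two trivial subsets give contractible ones). The key computation is then to evaluate $\omega_\eta(\ell)$ in terms of the winding numbers $\omega_\eta(\partial_j)=m_j-2$ of the enclosed boundary components. Writing $\Sigma_J$ for the component of $S\setminus\ell$ whose boundary is $\{\partial_j\}_{j\in J}\cup\{\ell\}$, this is a sphere with $|J|+1$ holes, so $\chi(\Sigma_J)=1-|J|$, and the Poincar\'e--Hopf index formula applied to $\Sigma_J$ with the boundary orientation induced from $S$ gives
\begin{equation*}
\sum_{j\in J}\omega_\eta(\partial_j)+\omega_\eta(\ell)=2\chi(\Sigma_J)=2-2|J|.
\end{equation*}
Substituting $\omega_\eta(\partial_j)=m_j-2$ and simplifying yields the clean identity $\omega_\eta(\ell)=2-\sum_{j\in J}m_j$.

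With this identity in hand the equivalence is immediate: $\omega_\eta(\ell)=0$ if and only if $\sum_{j\in J}m_j=2$, that is, if and only if the No Equipartition condition fails for $J$. Since every simple closed curve arises from some subset $J$, and conversely every proper nonempty $J$ is realized by such a curve while the trivial subsets $J=\emptyset$ and $J=\{1,\ldots,b\}$ give $\sum_{j\in J}m_j\in\{0,4\}$ and hence impose no constraint, the statement ``$\omega_\eta(\ell)\neq 0$ for all simple closed curves $\ell$'' is exactly the No Equipartition condition. Combined with the reduction to genus $0$ from Theorem \ref{thm:main1}, this proves (1)$\Leftrightarrow$(2).

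I expect the only genuine subtlety---a point to be handled carefully rather than a real difficulty---to be the bookkeeping in the Poincar\'e--Hopf step: one must use the Euler characteristic of the \emph{subsurface} $\Sigma_J$ rather than that of $S$, and keep track of the orientation that $\ell$ inherits as a boundary component of $\Sigma_J$. As a consistency check, computing instead from the complementary piece $\Sigma_{J^c}$ gives $\omega_\eta(\ell)=2-\sum_{j\in J^c}m_j$, which agrees with the first computation up to the sign flip forced by $\sum_{j\in J}m_j+\sum_{j\in J^c}m_j=4$, i.e. by reversing the orientation of $\ell$; since vanishing of the winding number is orientation-independent, this ambiguity does not affect the conclusion.
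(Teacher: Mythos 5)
Your proposal is correct and follows essentially the same route as the paper: reduce to Theorem \ref{thm:main1}, then apply the Poincar\'e--Hopf formula to the subsurface cut out by a separating simple closed curve to obtain $\omega_\eta(\ell)=2-\sum_{j\in J}m_j$, so that vanishing winding number corresponds exactly to $\sum_{j\in J}m_j=2=\tfrac12\sum_j m_j$. Your additional remarks on the classification of simple closed curves on a planar surface, the trivial subsets $J$, and the orientation consistency check are correct refinements of the same argument rather than a different approach.
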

\begin{proof}
Assume that $A$ is silting-discrete. By Theorem \ref{thm:main1} the genus of $S$ is zero, which is the first part of assumptions in (2). For the second part, let $\ell$ be a simple closed curve on $S$ that partitions the surface into two disjoint regions. Let $S' \subset S$ be the connected component bounded by $\ell$ and the boundary components $\{\partial_{i_1}, \partial_{i_2}, \ldots, \partial_{i_k}\}$. By viewing $S'$ as a stand-alone surface with boundary $\partial S' = \ell \cup \bigcup_{j=1}^k \partial_{i_j}$, we apply the Poincaré-Hopf index formula to $S'$. This yields:
\[
\omega_\eta(\ell) + \sum_{j=1}^k \omega_\eta(\partial_{i_j})  = 4 -2 (k+1).
\]

   Since $m_i=\omega_\eta(\partial_i)+2$, we have
   \[\omega_\eta(\ell)+\sum_{j=1}^k m_{i_j}=2.\]
    Thus, $\omega_\eta(\ell)=0$ if and only if $\sum_{j=1}^k m_{i_j}=2=\frac{1}{2}\sum_{j=1}^bm_j$. 

Conversely, if there exists $J\subset \{1,2,\ldots,b\}$ such that $\sum_{j\in I}m_j=2$, then the simple closed curve $\ell$ that separates the boundary components $\partial_j$ for $j\in J$ from the others satisfies $\omega_\eta(\ell)=0$. The proof is complete.
\end{proof}

\subsection{Silting-discreteness versus derived-discreteness}
In this subsection, we compare silting-discreteness with derived-discreteness for gentle algebras. Let $A=kQ/\langle I\rangle$ be a finite-dimensional gentle algebra. Recall that $A$ is called \emph{derived-discrete} if for any vector $\mathbf{n}=(n_i)_{i\in\Z}$ of natural numbers, there are only finitely many isomorphism classes of indecomposable objects in $\mathcal{D}^{\rm b}(A)$ of cohomology dimension vector $\mathbf{n}$.
Derived-discrete finite-dimensional algebras were classified by Vossieck \cite{V01} and Bobi\'nski-Geiss-Skowro\'nski \cite{BGS04} in terms of certain gentle one-cycle algebras $\Lambda(r,n,m)$ defined as follows. Let $r,n,m$ be non-negative integers with $n\ge r\ge 1$.
Let $Q(r,n,m)$ be the quiver
\begin{center}
\begin{tikzpicture}[
    >={Stealth[length=2mm]},
    node distance=1.2cm,
    auto,
    main/.style={inner sep=1pt, minimum size=1em, font=\footnotesize},
    % inner sep 从 1pt 改为 3pt (或更大)，让文字离箭头远一点
    lbl/.style={font=\scriptsize, inner sep=3pt}, 
    bullet/.style={circle, fill, inner sep=0pt, minimum size=3pt} 
]

    % =========================================
    % 1. 定义节点
    % =========================================

    % --- 左侧线性部分 ---
    \node[main] (0) at (0,0) {$0$};
    
    \node[main] (-1) [left=1.5cm of 0] {$(-1)$};
    
    \node[bullet] (L_b2) [left=1.0cm of -1] {};       
    \node[main]   (L_dots) [left=0.2cm of L_b2] {$\cdots$}; 
    \node[bullet] (L_b1) [left=0.2cm of L_dots] {};   
    
    \node[main] (-m) [left=1.0cm of L_b1] {$(-m)$};

    % --- 右侧环形部分 ---
    \node[main] (1) at (1.5, 1.0) {$1$};
    
    \node[bullet] (T_b1) at (3.0, 1.0) {};
    \node[main]   (T_dots) at (3.5, 1.0) {$\cdots$};
    \node[bullet] (T_b2) at (4.0, 1.0) {};
    
    \node[main] (n-r-1) at (5.5, 1.0) {$n-r-1$};
    \node[main] (n-r) at (7.0, 0) {$n-r$};
    \node[main] (n-r+1) at (5.5, -1.0) {$n-r+1$};
    
    \node[bullet] (B_b1) at (4.0, -1.0) {};
    \node[main]   (B_dots) at (3.5, -1.0) {$\cdots$};
    \node[bullet] (B_b2) at (3.0, -1.0) {};
    
    \node[main] (n-1) at (1.5, -1.0) {$n-1$};

    % =========================================
    % 2. 绘制箭头 
    % =========================================

    % --- 左侧箭头 ---
    \draw[->] (-m) -- node[lbl, above] {$\alpha_{-m}$} (L_b1);
    \draw[->] (L_b2) -- node[lbl, above] {$\alpha_{-2}$} (-1);
    \draw[->] (-1) -- node[lbl, above] {$\alpha_{-1}$} (0);

    % --- 环形连接口 ---
    \draw[->] (0) -- node[lbl, above left] {$\alpha_0$} (1);
    \draw[->] (n-1) -- node[lbl, below left] {$\alpha_{n-1}$} (0);

    % --- 环形上半部分 ---
    \draw[->] (1) -- node[lbl, above] {$\alpha_1$} (T_b1);
    \draw[->] (T_b2) -- node[lbl, above] {$\alpha_{n-r-2}$} (n-r-1);

    % --- 环形右侧转折 ---
    \draw[->] (n-r-1) -- node[lbl, above right] {$\alpha_{n-r-1}$} (n-r);
    \draw[->] (n-r) -- node[lbl, below right] {$\alpha_{n-r}$} (n-r+1);

    % --- 环形下半部分 ---
    \draw[->] (n-r+1) -- node[lbl, below] {$\alpha_{n-r+1}$} (B_b1);
    \draw[->] (B_b2) -- node[lbl, below] {$\alpha_{n-2}$} (n-1);

\end{tikzpicture}
\end{center}
and let $I$ be the set of relations of $kQ(r,n,m)$ given by the paths $\za_{n-1}\za_{0}$ and $\za_{i}\za_{i+1}$ for all $n-r\le i \le n-2$. 
\begin{Thm}\label{thm:derived-discrete}(\cite[Theorem 2.1]{V01} and \cite[Theorem A]{BGS04})\label{Thm:deriveddiscretegentle}
Let $A$ be a finite-dimensional algebra. Then $A$ is derived-discrete if and only if $A$ is derived equivalent to either a hereditary algebra of Dynkin type or $\Lambda(r,n,m)$ for some integers $m\ge 0$ and $n\ge r\ge 1$.
\end{Thm}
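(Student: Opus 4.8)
The plan is to prove both implications, handling the \emph{if} direction by direct verification and the \emph{only if} direction through a structural reduction to gentle one-cycle algebras.

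For the \emph{if} direction I would split into the two families. When $A$ is derived equivalent to a hereditary algebra $\Bbbk\Delta$ of Dynkin type, the Auslander--Reiten quiver of $\mathcal{D}^{\rm b}(A)$ is $\mathbb{Z}\Delta$; since $\Delta$ has finitely many vertices, every cohomology dimension vector $\mathbf{n}$ is realized by only finitely many shifts of the finitely many indecomposable $\Bbbk\Delta$-modules, so $A$ is derived-discrete. When $A=\Lambda(r,n,m)$, I would invoke the combinatorial classification of indecomposables in $\mathcal{D}^{\rm b}$ of a gentle algebra in terms of homotopy strings and homotopy bands. The point is that $\Lambda(r,n,m)$ is a gentle one-cycle algebra whose unique cycle fails the clock condition, so---in the surface language recalled in the preliminaries---its surface is an annulus admitting no simple closed curve of winding number $0$. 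Consequently there are no homotopy bands, every indecomposable is a string complex, and the number of string complexes with a fixed cohomology dimension vector is finite, since their total length is bounded by $\sum_i n_i$. This yields derived-discreteness.

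For the \emph{only if} direction, assume $A$ is derived-discrete and is not derived equivalent to a Dynkin hereditary algebra. The heart of the argument, due to Vossieck, is the structural claim that $A$ must then be derived equivalent to a gentle algebra whose quiver contains exactly one (unoriented) cycle and whose cycle fails the clock condition. I would establish this in two moves. First, derived-discreteness is a strong tameness constraint: it forces $A$ to be of tame derived type and, via the theory of special biserial algebras together with the impossibility of continuous families of indecomposables of bounded cohomology, to be gentle. Second, once $A$ is gentle, the homotopy-band calculus shows that any homotopy band produces a $\mathbb{P}^1$-indexed family (parametrized by an eigenvalue and a Jordan-block size) of pairwise non-isomorphic indecomposables sharing a single cohomology dimension vector; derived-discreteness therefore forbids homotopy bands, which for a gentle quiver is equivalent to having exactly one cycle that fails the clock condition---a satisfied clock condition, or a second cycle, would manufacture such a band.

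It then remains to normalize: every gentle one-cycle algebra failing the clock condition is derived equivalent to some $\Lambda(r,n,m)$ with $m\ge 0$ and $n\ge r\ge 1$. Following Bobi\'nski--Geiss--Skowro\'nski, I would perform a sequence of tilting mutations (reflection and APR-type tilts) that straighten the linear branches hanging off the cycle into a single tail of length $m$, normalize the cycle to length $n$, and collect the relations into $r$ consecutive ones, thereby matching the quiver $Q(r,n,m)$ with its prescribed relations. The main obstacle is unquestionably the first move of the \emph{only if} direction---showing that derived-discreteness alone forces the gentle one-cycle shape. This is not a formal consequence of the band calculus but requires Vossieck's classification ruling out all wild and all non-discrete tame derived types, and I would rely on that input rather than reprove it. The surface-model viewpoint of this paper gives an attractive repackaging of the final equivalence, since failing the clock condition is exactly the statement that the annulus carries no winding-number-zero simple closed curve, matching the geometric reformulation quoted in the introduction.
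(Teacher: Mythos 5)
This statement is not proved in the paper at all: it is imported verbatim from Vossieck \cite{V01} and Bobi\'nski--Geiss--Skowro\'nski \cite{BGS04}, and the paper only uses it (together with the winding-number computation $\omega_\eta(\ell)=-r$ for the core curve of the annulus) to deduce Corollary \ref{Cor:deriveddiscreteimpliessiltingdiscrete}. So there is no in-paper argument to compare yours against; what you have written is a roadmap of the proofs in the cited references, and as such it is broadly faithful to how the result is actually established there: band complexes give one-parameter families with a fixed cohomology dimension vector, so derived-discreteness forbids homotopy bands; for gentle algebras this singles out the one-cycle algebras failing the clock condition; and the normalization to $\Lambda(r,n,m)$ is done by explicit tilts as in \cite{BGS04}. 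You are also candid that the genuinely hard step --- that derived-discreteness forces $A$ to be derived equivalent to a gentle one-cycle algebra in the first place --- is taken as a black box from Vossieck, which is exactly what this paper does too.

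Two soft spots are worth flagging if you intend this as more than a summary. First, in the \emph{if} direction the claim that a string complex with cohomology dimension vector $\mathbf{n}$ has ``total length bounded by $\sum_i n_i$'' is not automatic: a priori the cohomology of a long string complex could be small due to cancellation, and one needs the explicit computation of the cohomology of string complexes over gentle algebras (or the minimality of the projective presentation) to bound the length of the homotopy string by the size of the cohomology. Second, your phrase that derived-discreteness forces gentleness ``via the theory of special biserial algebras together with the impossibility of continuous families'' is not an argument; ruling out all other derived types (wild and non-discrete tame) is precisely the content of Vossieck's theorem and cannot be recovered from the band calculus alone. Neither point is a gap relative to the paper, since the paper proves nothing here, but both would need to be filled in (or explicitly cited) for a self-contained proof.
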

If $\Lambda(r,n,m)$ has finite global dimension, or equivalently, $r$ is strictly smaller than $n$, the surface model of $\Lambda(r,n,m)$ is a graded marked surface $(S,\eta,\Delta)$ with a dissection, where $S$ is an annulus with two boundary components $\partial_1$ and $\partial_2$, with $m+r$ and $n-r$ marked points on $\partial_1$ and $\partial_2$, respectively. See the picture in Figure \ref{fig:derived-discrete}.

\begin{figure}
\begin{center}
\begin{tikzpicture}[scale=1.1, >=Stealth]

    % ==================== 1. 样式定义 ====================
    % 颜色定义
    \definecolor{mygreen}{RGB}{0, 180, 0}
    \definecolor{myred}{RGB}{200, 0, 0}
    \definecolor{arcblue}{RGB}{0, 0, 255}

    % 蓝点样式
    \tikzset{blueDot/.style={circle, fill=arcblue, inner sep=1.5pt}}
    
    % 蓝色连线样式
    \tikzset{blueLine/.style={thick, arcblue}}
    % 边界圆圈样式
    \tikzset{boundary/.style={thick, black}}
    % 内部孔洞样式
    \tikzset{hole/.style={thick, draw=black, fill=white, pattern=north east lines, pattern color=gray!40}}

    % 角标箭头样式: 使用 Stealth 箭头，并指定较小的宽度使其变细
% width=2.5pt 控制宽度(变细), length=4.5pt 控制长度。可以根据需要微调这两个数字。
\tikzset{angleArrow/.style={-{Stealth[width=3.5pt, length=4.5pt]}, thick, black, shorten >=1pt, shorten <=1pt}}

    % 黄色闭曲线样式
    \tikzset{gammaCurve/.style={thick, orange}}

    % ==================== 2. 定义坐标系统 ====================
    \def\Rout{3.0}
    \def\Rin{0.7}
    \coordinate (CenterIn) at (0, -0.8);

    % --- 定义外部点 ---
    \coordinate (P1) at (180:\Rout);
    \coordinate (P6) at (0:\Rout);
    \coordinate (P2) at (150:\Rout);
    \coordinate (P3) at (120:\Rout);
    \coordinate (P4) at (90:\Rout);
    \coordinate (P5) at (45:\Rout);
    \coordinate (P8) at (225:\Rout);
    \coordinate (P7) at (315:\Rout);

    % --- 定义内部点 ---
    \coordinate (P9)  at ($(CenterIn) + (90:\Rin)$);
    \coordinate (P10) at ($(CenterIn) + (180:\Rin)$);
    \coordinate (P11) at ($(CenterIn) + (270:\Rin)$);

    % ==================== 3. 绘制几何背景 ====================
    
    % --- 绘制外圆 (boundary) ---
    \draw[boundary] (0,0) circle (\Rout);
    \node at (45:3.4) {$\partial_1$};

    % --- 绘制内圆 (Hole) ---
    \draw[hole] (CenterIn) circle (\Rin);
    \node at ($(CenterIn) + (45:0.35)$) {\small $\partial_2$};

    % --- 绘制黄色闭曲线 (ell) ---
    % (scale=1.0)
    \draw[gammaCurve, postaction={decorate, decoration={markings, mark=at position 0.85 with {\arrow[scale=1.0, orange]{>}}}}] 
        ($(CenterIn)+(0:1.8)$) arc (0:-360:1.7 and 1.3);
    \node[orange, above right] at ($(CenterIn)+(40:1.5)$) {\large $\ell$};

    % ==================== 4. 绘制连线 (Blue Lines) ====================
    
    % --- 6和7之间的省略结构 ---
    \draw[blueLine] (P6) to[bend right=20] coordinate[pos=0.4] (OnArc6) ($(P6)!0.3!(P7)$);
    \draw[blueLine] (P7) to[bend left=20] coordinate[pos=0.3] (OnArc7) ($(P7)!0.3!(P6)$);
    \node[font=\large, rotate=65] at (337.5:2.6) {$\cdots$};

    % --- 连线 ---
    \draw[blueLine] (P1) -- coordinate[pos=0.5] (OnLine12) (P2);
    \draw[blueLine] (P1) to[bend right=15] coordinate[pos=0.3] (OnLine13) (P3);
    \draw[blueLine] (P1) to[bend right=20] coordinate[pos=0.25] (OnLine14) (P4);
    \draw[blueLine] (P1) to[bend left=10] coordinate[pos=0.4] (OnLine19) (P9);
    \draw[blueLine] (P1) -- coordinate[pos=0.45] (OnLine110) (P10);
    \draw[blueLine] (P1) to[bend right=20] coordinate[pos=0.3] (OnLine111) (P11);
    \draw[blueLine] (P1) -- coordinate[pos=0.4] (OnLine18) (P8);
    \draw[blueLine] (P8) to[bend right=20] coordinate[pos=0.1] (OnLine87) coordinate[pos=0.9] (OnLine78) (P7);
    \path (P8) -- coordinate[pos=0.1] (OnLine81) (P1);
    \draw[blueLine] (P4) to[bend right=20] coordinate[pos=0.2] (OnLine45) (P5);
    \path (P4) to[bend left=20] coordinate[pos=0.1] (OnLine41) (P1);
    \draw[blueLine] (P5) to[bend right=20] coordinate[pos=0.2] (OnLine56) coordinate[pos=0.85] (OnLine65) (P6);
    \path (P5) to[bend left=20] coordinate[pos=0.2] (OnLine54) (P4);

    % ==================== 5. 绘制角标 (Alpha Arcs) ====================   
    % --- Vertex 7 & 6 处的角 (无Label) ---
    \draw[angleArrow] (OnLine78) to[bend left=45] (OnArc7);
    \draw[angleArrow] (OnArc6) to[bend left=45] (OnLine65);

    % --- Vertex 1 ---
    \draw[angleArrow] (OnLine12) to[bend left=20] node[midway,yshift=8pt,xshift=4pt, font=\tiny, inner sep=1pt] {$\alpha_{-m}$} (OnLine13);
    \draw[angleArrow] (OnLine19) to[bend left=20] node[midway,yshift=0pt,xshift=6pt, font=\tiny, inner sep=1pt] {$\alpha_{1}$} (OnLine110);
    \draw[thick, black, dashed] (OnLine13) to[bend left=15] (OnLine14);
    \draw[thick, black, dashed] (OnLine110) to[bend left=15] (OnLine111);
    \draw[angleArrow] (OnLine14) to[bend left=20] node[midway, right, font=\tiny] {$\alpha_{0}$} (OnLine19);
    \draw[angleArrow] (OnLine111) to[bend left=20] node[midway, right,yshift=-6pt,xshift=-3pt, font=\tiny] {$\alpha_{n-r}$} (OnLine18);
    \draw[angleArrow] (OnLine81) to[bend left=45] node[midway, above, xshift=8pt, font=\tiny] {$\alpha_{n-r+1}$} (OnLine87);

    % --- Vertex 4 & 5 ---
    \draw[angleArrow] (OnLine45) to[bend left=45] node[midway, below,xshift=2pt, font=\tiny] {$\alpha_{n-1}$} (OnLine41);
    \draw[angleArrow] (OnLine56) to[bend left=45] node[midway, below left,xshift=2pt, yshift=1pt, font=\tiny] {$\alpha_{n-2}$} (OnLine54);

    % ==================== 6. 绘制顶点 ====================
    \foreach \p in {P1, P2, P3, P4, P5, P6, P7, P8, P9, P10, P11} {
        \node[blueDot] at (\p) {};
    }

\end{tikzpicture}
\end{center}
\begin{center}
\caption{The surface model $(S,\eta,\Delta)$ of the derived-discrete algebra $\Lambda(r,n,m)$.}\label{fig:derived-discrete}
\end{center}
\end{figure}
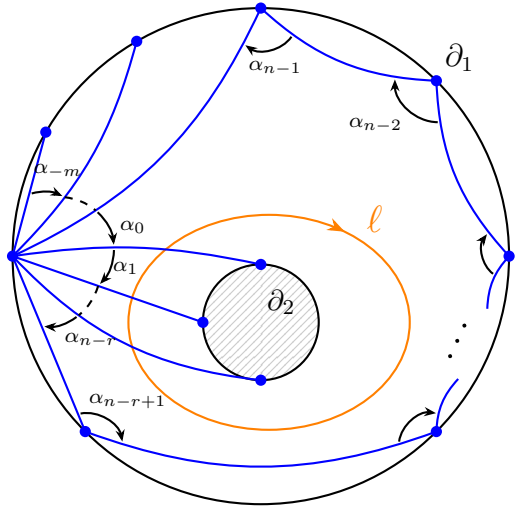

There is only one simple closed curve $\ell$ on $S$ up to homotopy, which winds around the hole once. We have
\[\omega_\eta(\ell)= |\za_0|+|\za_1|+\cdots +|\za_{n-r}| +|\za_{n-r+1}| +\cdots +|\za_{n-1}|-(r+1-1) = -r\not=0,
\]
see for example \cite[Section 3.2]{JSW23} for the calculation.
Then by Theorem \ref{thm:main1} and Theorem \ref{thm:derived-discrete}, we have the following corollary, which is also proved in \cite[Proposition 6.12]{BPP16}, see also \cite[Example 3.9 and Corollary 4.2]{YY21} and \cite[Appendix A]{AH24}.

\begin{corollary}\label{Cor:deriveddiscreteimpliessiltingdiscrete}
If $r<n$ in $A=\Lambda(r,n,m)$, then $\per(A)$ is silting-discrete.
\end{corollary}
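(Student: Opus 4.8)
The plan is to apply Theorem~\ref{thm:main1} directly, so that the entire argument reduces to checking that $A=\Lambda(r,n,m)$ with $r<n$ meets its two hypotheses: the associated surface has genus zero, and every simple closed curve has nonzero winding number. The genuinely substantive work—converting these surface invariants into silting-discreteness—has already been performed in Theorem~\ref{thm:main1}, so here only bookkeeping and the verification of the standing assumptions remain.

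First I would confirm that $A$ lies within the scope of Theorem~\ref{thm:main1}, namely that it is a homologically smooth, proper graded gentle algebra. Since $A$ is finite-dimensional and is viewed as a graded gentle algebra with trivial differential, it is automatically proper. For homological smoothness I would use the criterion recalled after Definition~\ref{definition:gentle algebras}: it suffices that the quiver $Q(r,n,m)$ has no cycle without relations and no cycle with full relations. The only cycle here is $\za_0\za_1\cdots\za_{n-1}$, and the relations lying on it are precisely $\za_{n-1}\za_0$ together with $\za_i\za_{i+1}$ for $n-r\le i\le n-2$, that is, exactly $r$ of the $n$ cyclic pairs. The hypothesis $r\ge 1$ forbids the cycle from being relation-free, while $r<n$ forbids it from carrying full relations; hence $A$ is homologically smooth. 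This is the single step where the hypothesis $r<n$ is essential, and it is the main (though minor) point requiring care.

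Next I would read off the two geometric invariants from the surface model $(S,\eta,\Delta)$ recalled above. Since $S$ is an annulus, it has genus $0$, which gives the first condition of Theorem~\ref{thm:main1}. For the second, the annulus carries, up to homotopy, a single simple closed curve $\ell$, winding once around the hole, and the computation already recorded above yields $\omega_\eta(\ell)=-r$. Because $r\ge 1$, this winding number is nonzero, so $\omega_\eta(\ell)\neq 0$ for the unique—and therefore for every—simple closed curve on $S$.

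With both hypotheses of Theorem~\ref{thm:main1} verified, its conclusion applies and shows that $\per(A)$ is silting-discrete. I do not anticipate any remaining obstacle: everything beyond the homological-smoothness check is either supplied by the surrounding description of the surface model of $\Lambda(r,n,m)$ or absorbed into Theorem~\ref{thm:main1} itself.
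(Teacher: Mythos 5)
Your proof is correct and follows essentially the same route as the paper: the paper likewise observes that for $r<n$ the surface model is an annulus (genus $0$) whose unique simple closed curve has winding number $-r\neq 0$, and then invokes Theorem~\ref{thm:main1}. Your additional explicit check that $r\ge 1$ and $r<n$ guarantee homological smoothness is a welcome piece of bookkeeping that the paper leaves implicit in the phrase ``finite global dimension''.
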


\section{Silting-discreteness of graded skew-gentle algebras}

In this section, we investigate the silting-discreteness of graded skew-gentle algebras via their associated surface models. Parallel to the case of graded gentle algebras, every graded skew-gentle algebra admits a geometric realization as a graded marked surface with orbifolds. Within this framework, the set of orbifolds is in one-to-one correspondence with the special loops of the skew-gentle algebra. When the orbifold set is empty, the construction naturally reduces to the surface model of a graded gentle algebra. 

For a comprehensive treatment of surface models for graded skew-gentle algebras, we refer the reader to \cite{AP24, BSW24, CK24,  QZZ22}. Our primary goal in this section is to establish counterparts to Theorem \ref{thm::1.2} and Corollary \ref{Cor:siltingdiscretegentle} for the skew-gentle setting.

Now let $A = \Bbbk Q / \langle I \rangle$ be a graded skew-gentle algebra and let $(S, O, \eta, \Delta)$ denote its associated surface model, where $O$ is the set of orbifolds, $\eta$ is the line field, and $\Delta$ is the dissection. We have the following result.
\begin{theorem}\label{Thm:siltingdiscreteforskewgentle}
Let $A=\Bbbk Q/\langle I \rangle$ be a graded skew-gentle algebra with surface model $(S,O,\eta,\Delta)$. Consider the following conditions.
  \begin{enumerate}
      \item $\per(A)$ is silting-discrete.
      \item $S$ is of genus $0$ and $\# O\le 1$. Moreover, $\omega_\eta(\ell)\neq 0$ for any simple closed curve $\ell$.
      \item $S$ is of genus $0$ and 
        \[ \sum_{j\in J}m_j\not=2 \mbox{ for all $J\subset\{1,2,\ldots,b\}$,} \] 
        where  $m_i=\omega_\eta(\partial_i)+2$ for $1\le i\le b$. 
 \end{enumerate}
 Then (1) implies (2), and (2) is equivalent to (3).
\end{theorem}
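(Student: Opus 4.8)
\emph{Strategy.} The plan is to follow the scheme of Theorem~\ref{thm:main1}, upgrading its two obstructions (genus and winding number) by a third one (the number of orbifold points), and to work throughout in the orbifold surface model of \cite{QZZ22,BSW24,AP24}. The implication (1)$\Rightarrow$(2) will be proved through a single mechanism. Assuming $\per(A)$ is silting-discrete, Proposition~\ref{Prop:silt} guarantees that every pre-silting object is partial silting, and Proposition~\ref{Prop:equ-silting-discrete} guarantees that $\End_{\per(A)}(P)$ is $\tau$-tilting finite for every silting object $P$. To contradict this when a condition in (2) fails, I would produce a graded skew-gentle algebra $A'$ derived equivalent to $A$ --- obtained by choosing a suitable graded dissection on an orbifold surface whose line field is homotopic to $(S,O,\eta)$, via the derived invariance established in \cite{LSV22,QZZ22,BSW24} --- such that, for the idempotent $e'=\sum_v e_v$ over a distinguished set of vertices, $\End_{\per(A')}(M)\cong \h^0(e'A'e')$ is (or contains as a corner) one of the $\tau$-tilting infinite algebras of Section~\ref{section:skew-gentle}, where $M:=\bigoplus_v e_vA'$. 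Here $M$ is pre-silting and hence completes to a silting object $P$ by Proposition~\ref{Prop:silt}; since $\End_{\per(A')}(M)$ is the idempotent truncation $e'\End_{\per(A')}(P)e'$ and idempotent truncation preserves $\tau$-tilting infiniteness (\cite[Corollary~2.4]{P19}), the algebra $\End_{\per(A')}(P)$ is $\tau$-tilting infinite, contradicting Proposition~\ref{Prop:equ-silting-discrete}.

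\emph{The three parts of (2).} If $S$ has positive genus, then, mimicking the genus-one construction of Lemma~\ref{lem:key1}, one arranges a handle carrying a degree-zero Kronecker subquiver $1\rightrightarrows 2$; its band makes the relevant corner representation-infinite, hence $\tau$-tilting infinite by \cite{P19}, and the orbifold points do not interfere with the handle. If $S$ has genus zero but admits a simple closed curve $\ell$ with $\omega_\eta(\ell)=0$, then, as in Lemmas~\ref{lem:key2} and \ref{lem:key3}, the vanishing winding number lets me realize (after a grading shift) the two arcs whose smoothing is $\ell$ as a degree-zero band: when $\ell$ avoids the orbifold points this is an asymmetric band giving a Kronecker corner, and when $\ell$ encircles orbifold points it is a symmetric band, i.e.\ a copy of the algebra $\mathbf{S}_2$ of Lemma~\ref{lem::S_2}. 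Finally, since a simple closed curve encircling exactly two orbifold points has winding number zero by the orbifold index computation below, the condition $\#O\le 1$ is forced as well (any two orbifold points yield a degree-zero $\mathbf{S}_2$, $\tau$-tilting infinite by Lemma~\ref{lem::S_2}). This gives (1)$\Rightarrow$(2).

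\emph{Equivalence (2)$\Leftrightarrow$(3).} The plan is to redo the Poincar\'e--Hopf bookkeeping of Corollary~\ref{Cor:siltingdiscretegentle}, now tracking the order-two orbifold points. The orbifold index formula reads $\sum_i\omega_\eta(\partial_i)=2\chi^{\mathrm{orb}}(S)=4-4g-2b-\#O$, so $\sum_{i=1}^b m_i=4-4g-\#O$, which in genus zero is $\sum_i m_i=4-\#O$. Cutting along a separating simple closed curve $\ell$ enclosing the boundary components indexed by $J$ together with $\kappa$ orbifold points and applying the same formula to the enclosed subsurface yields $\omega_\eta(\ell)=2-\kappa-\sum_{j\in J}m_j$. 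Taking $\kappa=0$ shows that a curve avoiding the orbifold points has vanishing winding number exactly when $\sum_{j\in J}m_j=2$, which is precisely condition (3); taking $J=\varnothing$ and $\kappa=2$ shows that two orbifold points always produce a winding-zero curve, recovering $\#O\le1$. For (3)$\Rightarrow$(2) I would combine the separating-curve formula with the total relation $\sum_i m_i=4-\#O$ to recover both $\#O\le1$ and the nonvanishing of all winding numbers; the point requiring care is that the purely boundary-indexed inequality of (3) must be shown to account for the winding-zero curves that enclose orbifold points.

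\emph{Main obstacle.} I expect the genuinely new difficulty to be the orbifold case of (1)$\Rightarrow$(2): realizing a copy of $\mathbf{S}_2$ \emph{in degree zero} inside $\h^0(A')$ for a dissection adapted to two orbifold points requires simultaneously controlling the gradings of the connecting arrows (through the line field, whose homotopy class is constrained by the derived equivalence) and verifying that the resulting pre-silting object has no positive self-extensions. This is the essential new ingredient beyond the gentle case of Theorem~\ref{thm:main1}, and it is closely tied to the second delicate point, namely the precise contribution of the order-two orbifold points to the Poincar\'e--Hopf formula underlying (2)$\Leftrightarrow$(3).
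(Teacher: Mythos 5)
Your overall architecture --- deriving a contradiction from a pre-silting object whose completion (via Proposition \ref{Prop:silt}) has a $\tau$-tilting infinite endomorphism algebra, detected through an idempotent corner by \cite[Corollary 2.4]{P19} --- is exactly the paper's, and your treatment of the case $\#O\ge 2$ (a degree-zero arrow joining two special-loop vertices, producing the minimal symmetric band of Lemma \ref{lem::S_2}) coincides with Proposition \ref{prop:Oge2}. The genuine divergence is in the remaining case $\#O=1$ with a winding-zero simple closed curve (and likewise for positive genus): the paper does \emph{not} attempt your direct construction of a degree-zero band inside a skew-gentle endomorphism algebra. Instead, in Proposition \ref{prop:Sge1} it chooses a dissection in which the unique special-loop vertex is a leaf of the quiver, deletes that vertex to obtain a \emph{gentle} algebra $A''=eA'e$ whose surface model still carries the curve $\ell$ with $\omega_\eta(\ell)=0$, applies Theorem \ref{thm:main1} to $A''$, and transfers non-silting-discreteness back to $A'$ by the inheritance result \cite[Theorem 2.10]{AH24}. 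This reduction completely sidesteps what you correctly identify as the ``main obstacle'' (controlling the gradings of the connecting arrows and verifying pre-silting-ness near the orbifold point); if you pursue your route you must actually resolve that obstacle, whereas the paper never needs to.

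Two concrete problems in your sketch. First, the dichotomy ``$\ell$ avoids the orbifold points $\Rightarrow$ Kronecker corner; $\ell$ encircles orbifold points $\Rightarrow$ a copy of $\mathbf{S}_2$'' breaks down precisely in the case that matters, $\#O=1$: a single orbifold point cannot produce $\mathbf{S}_2$, which requires two special loops. A winding-zero curve separating the one orbifold point together with some boundary components from the rest must still be handled by the Kronecker-type configuration of Lemma \ref{lem:key2} (plausible, since the two arcs whose smoothing is $\ell$ need not touch the orbifold point) or by the paper's reduction. Second, in (3)$\Rightarrow$(2) your derivation of $\#O\le 1$ from the total relation $\sum_i m_i=4-\#O$ only excludes $\#O=2$ (where $J=\{1,\dots,b\}$ gives $\sum_{j\in J} m_j=2$); for $\#O\ge 3$ the total is at most $1$ and condition (3) yields no immediate contradiction, while by your own separating-curve formula a curve enclosing exactly two of the orbifold points has winding number zero. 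So the case $\#O\ge 3$ of (3)$\Rightarrow$(2) requires an argument you have not supplied (the paper's own one-line appeal to the argument of Corollary \ref{Cor:siltingdiscretegentle} is equally silent on this point). Your orbifold Poincar\'e--Hopf bookkeeping is otherwise consistent with the facts the paper uses (winding number one around a single orbifold point), and the direction (2)$\Rightarrow$(3) goes through as you describe, since there the relevant separating curve can always be taken to bound an orbifold-free side.
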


We divide the proof of Theorem \ref{Thm:siltingdiscreteforskewgentle} into the following two propositions.

\begin{proposition}\label{prop:Oge2}
      If $\# O\ge 2$, then $\per(A)$ is not silting-discrete. 
\end{proposition}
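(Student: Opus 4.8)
The plan is to argue by contradiction: assuming $\per(A)$ is silting-discrete, I will exhibit a silting object whose endomorphism algebra is $\tau$-tilting infinite, contradicting the equivalence of conditions (1) and (5) in Proposition \ref{Prop:equ-silting-discrete}. The mechanism is the one already used in Lemmas \ref{lem:key1} and \ref{lem:key2}: I produce a pre-silting object $P_0$ whose endomorphism algebra contains, as an idempotent subalgebra, a copy of the algebra $\mathbf{S}_2$ of Lemma \ref{lem::S_2}. Since $\mathbf{S}_2$ is of extended Dynkin type $\widetilde{\mathbb{D}}$ and hence $\tau$-tilting infinite, passing to an idempotent subalgebra (as in \cite[Corollary 2.4]{P19}) shows that $\End_{\per(A)}(P_0)$ is $\tau$-tilting infinite. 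Completing $P_0$ to a silting object $P$ via Proposition \ref{Prop:silt} then realizes $\End(P_0)$ as an idempotent subalgebra of $\End_{\per(A)}(P)$, forcing the latter to be $\tau$-tilting infinite as well, which is the desired contradiction.

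The geometric input is the construction of $P_0$ from the two orbifold points. Since $S$ is connected, I choose a simple arc $\sigma$ joining the two orbifold points $o_1,o_2\in O$ and avoiding the remaining orbifold points. Using the surface--algebra dictionary for graded skew-gentle algebras (cf. \cite{LSV22, QZZ22, BSW24}) together with the change-of-dissection derived equivalences that parallel \cite[Corollary 1.10]{LP20}, I then select a graded admissible dissection $\Delta'$ adapted to $\sigma$, giving a graded skew-gentle algebra $A'$ with $\per(A')\simeq\per(A)$. The arcs of $\Delta'$ crossed by $\sigma$, together with the two arcs incident to $o_1$ and $o_2$, form a linear chain carrying a special loop at each end; this is precisely the shape of the \emph{minimal symmetric band} $\epsilon^\ast z\eta^\ast z^{-1}$ of Definition \ref{pro::minimal-symmetric-band} (whose existence in the presence of special loops is guaranteed by the mechanism behind Proposition \ref{prop::H-symmetric-band}), and its underlying quiver is exactly the $\mathbf{S}_2$ quiver. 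Taking $S_0$ to be this vertex set and $P_0:=\bigoplus_{i\in S_0}e_iA'$, I aim to identify $\h^0(e_{S_0}A'e_{S_0})$ with $\mathbf{S}_2$. Because the chain $z$ is a simple path (a tree), a grading shift lets me assume that all of its arrows lie in degree $0$, and the special loops are automatically in degree $0$ since $\epsilon^2=\epsilon$ is homogeneous; this gives $\End_{\per(A')}(P_0)\cong\h^0(e_{S_0}A'e_{S_0})\cong\mathbf{S}_2$.

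The main obstacle is to certify that $P_0$ is genuinely pre-silting, i.e.\ that $\Hom_{\per(A')}(P_0,P_0[n])=\h^n(e_{S_0}A'e_{S_0})=0$ for all $n\ge 1$. After the grading shift the chain and the loops contribute only in degree $0$; the danger is that a path among the vertices of $S_0$ which leaves the chain and returns could contribute in positive degree. Ruling this out is exactly the analogue of the verification in Lemma \ref{lem:key2} that all ``reverse'' paths lie in the ideal, and it is where both the minimality of $\sigma$ (so that $z$ visits no vertex twice, as required in Definition \ref{pro::minimal-symmetric-band}) and the explicit local orbifold picture must be used: I would read each potential return path off the surface and show, using the relations of $A'$ and the simplicity of $\sigma$, that it either vanishes in $A'$ or carries non-positive degree. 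Once pre-silting is established, the contradiction runs as described above. A variant worth recording is that, whenever the grading can in addition be arranged so that $A'$ is non-positive, one may bypass the completion step entirely: then $A'$ is itself a silting object and $\h^0(A')$ is a skew-gentle algebra admitting the symmetric band $\epsilon^\ast z\eta^\ast z^{-1}$, hence representation-infinite and, by Theorem \ref{theo::main-result-skew-gentle}, $\tau$-tilting infinite.
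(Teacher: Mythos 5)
Your overall strategy coincides with the paper's: produce a pre-silting object whose endomorphism algebra is the two-special-loop algebra $\mathbf{S}_2$ (equivalently, contains a minimal symmetric band), invoke Lemma \ref{lem::S_2} / Theorem \ref{theo::main-result-skew-gentle} to get $\tau$-tilting infiniteness, complete to a silting object via Proposition \ref{Prop:silt}, and contradict Proposition \ref{Prop:equ-silting-discrete} using \cite[Corollary 2.4]{P19}. The difference is in the geometric construction of the pre-silting object, and that is precisely where your argument has a genuine gap. You take the full chain of arcs crossed by an arc $\sigma$ joining the two orbifold points and set $P_0=\bigoplus_{i\in S_0}e_iA'$; you then correctly identify that the whole proof hinges on showing $\h^n(e_{S_0}A'e_{S_0})=0$ for $n\ge 1$, but you do not carry this out — you only say you ``would read each potential return path off the surface.'' This is not a formality: the intermediate vertices of your chain can carry arbitrary additional arrows of $Q'$, and after the grading shift that normalizes the chain arrows to degree $0$, a path that leaves the chain and returns has degree dictated by the line field, with no reason to be non-positive or to lie in the ideal. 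So for a general dissection adapted to $\sigma$, $P_0$ need not be pre-silting, and the construction as stated can fail.

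The paper avoids this entirely by choosing the dissection more aggressively: it arranges (analogously to the dissection in \eqref{equ:An}) that the two arcs ending at the two orbifold points share an endpoint and sit at a terminal position of the quiver, i.e.\ $Q$ has the shape $Q'\to 1\xrightarrow{\alpha}2$ with special loops $\epsilon_1,\epsilon_2$ at $1$ and $2$ and no other arrows incident to $1$ or $2$. Then only the two summands $e_1A\oplus e_2A$ are needed, $(e_1+e_2)A(e_1+e_2)$ is spanned by $e_1,e_2,\alpha,\epsilon_1,\epsilon_2$ and their products, all in degree $0$ (since $|\alpha|=0$ can be arranged and $|\epsilon_i|=0$ is forced), and pre-siltingness is immediate by inspection — there are simply no paths from $2$ back to $1$. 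To repair your write-up you should either prove the vanishing you flagged for your chain, or (better) replace the chain by this two-arc terminal configuration, which is the key simplification your proposal is missing. Your closing ``variant'' (arranging $A'$ non-positive to bypass the completion step) is also only sketched and would require a separate justification that such a global grading exists.
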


\begin{proof}
  Assume that $\per(A)$ is silting-discrete. Since $S$ contains at least two orbifolds, by potentially replacing the admissible dissection with one analogous to the construction in \ref{equ:An}, we may assume without loss of generality that the quiver $Q$ takes the following form:
\begin{center}
\begin{tikzpicture}[
    >=stealth,
    node distance=1.2cm,      
    font=\small
]

    % 1. 定义节点
    \node[draw=gray, dashed, thick, rounded corners=4pt, inner sep=6pt] (Q) {$Q'$};
    \node (1) [right=of Q] {$1$};
    \node (2) [right=of 1] {$2$};

    % 2. 绘制箭头
    \draw[->, thick, shorten >=3pt] (Q) -- (1);
    
    % 在 (1) 到 (2) 的路径中间添加 node[above] {$\alpha$}
    \draw[->, thick, shorten >=3pt, shorten <=3pt] (1) -- node[above, font=\scriptsize] {$\alpha$} (2);

    % 3. 绘制自环
    \draw[->, thick, shorten >=2pt, shorten <=2pt] (1) to [out=120, in=60, looseness=6] node[above, font=\scriptsize] {$\epsilon_1$} (1);
    \draw[->, thick, shorten >=2pt, shorten <=2pt] (2) to [out=120, in=60, looseness=6] node[above, font=\scriptsize] {$\epsilon_2$} (2);

\end{tikzpicture}
\end{center}
where $\epsilon_1$ and $\epsilon_2$ are special loops at vertices $1$ and $2$, respectively and $|\za|=0$. Specifically, vertices 1 and 2 constitute the rightmost extremity of $Q$, meaning no additional arrows enter or leave these vertices.
Let $e=e_1+e_2$. Then 
$eA$ is a pre-silting object in $\per(A)$. Thus, there exists $P\in \per(A)$ such that $P\op eA$ is a silting object by Proposition \ref{Prop:silt}. 
We regard $\End(eA)$ as a subalgebra of $\End(eA\op P)$ given by the idempotent ${\rm{id}}_{eA}$. 
Note that by our construction of $Q$, $\End(eA)\cong eAe$ is a skew gentle algebra given by the following sub-quiver of $Q$:
\begin{center}
\begin{tikzpicture}[
    >=stealth,
    node distance=1cm, 
    font=\small
]

    % 1. 定义节点
    \node (1) {$1$}; 
    \node (2) [right=of 1] {$2$};

    % 2. 绘制箭头   
    % (1) 到 (2) 的路径，标签 alpha
    \draw[->, thick, shorten >=3pt, shorten <=3pt] (1) -- node[above, font=\scriptsize] {$\alpha$} (2);

    % 3. 绘制自环
    \draw[->, thick, shorten >=2pt, shorten <=2pt] (1) to [out=120, in=60, looseness=8] node[above, font=\scriptsize] {$\epsilon_1$} (1);
    \draw[->, thick, shorten >=2pt, shorten <=2pt] (2) to [out=120, in=60, looseness=8] node[above, font=\scriptsize] {$\epsilon_2$} (2);

\end{tikzpicture}
\end{center}
It follows that $\End(eA)$ is representation-infinite
as it
contains a minimal symmetric band $\epsilon_1 \alpha \epsilon_2 \alpha^{-1}$ (c.f. Definition \ref{prop::minimal-asymmetric-band}).
According to Theorem \ref{theo::main-result-skew-gentle}, $\End(eA)$ is $\tau$-tilting infinite. Furthermore, by \cite[Corollary 2.4]{P19}, the endomorphism algebra $\End(eA \oplus P)$ must also be $\tau$-tilting infinite. This stands in contradiction to the silting-discreteness of $\per(A)$ guaranteed by Proposition \ref{Prop:equ-silting-discrete}.
\end{proof}

\begin{proposition}\label{prop:Sge1}
    If there exists a simple closed curve on $S$
    with winding number zero, then $\per(A)$ is not silting-discrete.
\end{proposition}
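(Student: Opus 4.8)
The plan is to transport the argument of Lemma~\ref{lem:key2} to the orbifold surface $(S,O,\eta,\Delta)$. Assume for contradiction that $\per(A)$ is silting-discrete and that $S$ carries a simple closed curve $\ell$ with $\omega_\eta(\ell)=0$. Following the gentle case, I would first choose an admissible dissection $\Delta'$ of $(S,O,\eta)$ two of whose $\bpoint$-arcs, say $\ell_1$ and $\ell_2$, meet at exactly two marked points and are such that smoothing them there produces a curve isotopic to $\ell$. Write $A'=\Bbbk Q'/\langle I'\rangle$ for the graded skew-gentle algebra associated with $\Delta'$; since $\Delta'$ and $\Delta$ are two dissections of the same graded orbifold surface, one has $\per(A')\simeq\per(A)$ (see \cite{LSV22,QZZ22,BSW24}), so it suffices to exhibit a silting object of $\per(A')$ whose endomorphism algebra is $\tau$-tilting infinite.

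The two marked points at which $\ell_1$ and $\ell_2$ meet give two paths $p,q$ in $Q'$ from the vertex $1$ corresponding to $\ell_1$ to the vertex $2$ corresponding to $\ell_2$; by the winding-number computation $\omega_\eta(\ell)=|p|-|q|$ (cf.\ \cite[Section~3.2]{JSW23}) the hypothesis $\omega_\eta(\ell)=0$ gives $|p|=|q|$, and after a grading shift I may assume $|p|=|q|=0$. Exactly as in Lemma~\ref{lem:key2}, every path from vertex $2$ to vertex $1$ lies in $I'$, from which one checks that $e_1A'\oplus e_2A'$ has no positive self-extensions and is therefore a pre-silting object. Silting-discreteness together with Proposition~\ref{Prop:silt} then let me complete it to a silting object $P=e_1A'\oplus e_2A'\oplus P'$. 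The idempotent subalgebra $\h^0\big((e_1+e_2)A'(e_1+e_2)\big)$ of $\End_{\per(A')}(P)$ is the Kronecker algebra $\Bbbk(1\rightrightarrows 2)$, which is representation-infinite and hence $\tau$-tilting infinite; by \cite[Corollary~2.4]{P19} this forces $\End_{\per(A')}(P)$ to be $\tau$-tilting infinite, contradicting silting-discreteness via Proposition~\ref{Prop:equ-silting-discrete}.

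The step demanding genuinely new care, and the main obstacle, is the very first one: I must ensure that $\ell$ is realized by two \emph{ordinary} arcs, so that the vertices $1$ and $2$ carry no special loops and the local endomorphism algebra is honestly the Kronecker algebra rather than a skew-gentle enlargement of it (to which neither Lemma~\ref{lem:key2} nor the tameness of the Kronecker algebra would apply). Since the points of $O$ are isolated interior points, the simple closed curve $\ell$ can be homotoped off them, and one may fix a thin annular neighbourhood of $\ell$ disjoint from $O$; the arcs $\ell_1,\ell_2$, together with the segments joining them to the requisite $\bpoint$-marked points, can then be routed to stay non-adjacent to every orbifold point, whence neither vertex $1$ nor vertex $2$ supports a special loop. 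Granting this, every later step is local at vertices $1$ and $2$ and is insensitive to the orbifold data elsewhere on $S$, so the proof of Lemma~\ref{lem:key2} carries over essentially verbatim; I therefore expect the only substantive work beyond the gentle case to be this topological verification that the Kronecker configuration can be produced by orbifold-free arcs.
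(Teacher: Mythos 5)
Your route is genuinely different from the paper's, and it has a gap at exactly the point you flag as "the main obstacle". The paper never rebuilds the Kronecker configuration on the orbifold surface: it first disposes of $\# O\ge 2$ via Proposition \ref{prop:Oge2}; for $\# O=1$ it observes that $\ell$ cannot enclose only the orbifold point (such a curve has winding number one), chooses a dissection whose unique special arc avoids $\ell$ and sits at an end vertex $1$ of the quiver, passes to the idempotent subalgebra $A''=eA'e$ with $e=1-e_1$ --- a graded \emph{gentle} algebra whose surface is obtained by deleting the orbifold point and still carries $\ell$ with $\omega_\eta(\ell)=0$ --- applies Theorem \ref{thm:main1} to $A''$, and transfers non-silting-discreteness back to $A'$ by the inheritance theorem \cite[Theorem 2.10]{AH24}. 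This avoids every Hom-computation in the skew-gentle derived category.

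The gap in your version is that the pre-silting verification for $e_1A'\oplus e_2A'$ is \emph{not} local at the vertices $1$ and $2$, and ensuring that $\ell_1,\ell_2$ are ordinary arcs does not suffice. In a skew-gentle algebra the path $\alpha\epsilon_v\beta$ is nonzero even though $\alpha\beta\in I'$, so $e_jA'e_i$ contains paths that wrap through special vertices and have no analogue in the gentle case. Concretely, any orbifold point lying in the region bounded by $\ell_1\cup\ell_2$ must be joined by its special arc to a $\bpoint$-marked point of that region, and if the only available such point is the shared endpoint $a$ (which can be forced, e.g.\ when the enclosed boundary component carries a single $\bpoint$), then the clockwise sweep from $\ell_1$ to $\ell_2$ at $a$ factors through the special arc: the would-be Kronecker arrow $p$ becomes a composite of the form $(\cdots)\epsilon_v(\cdots)$, the relation $\omega_\eta(\ell)=|p|-|q|$ needs re-derivation, extra (possibly positive-degree) classes appear in $\bigoplus_{n>0}\h^n\bigl((e_1+e_2)A'(e_1+e_2)\bigr)$, and $\h^0\bigl((e_1+e_2)A'(e_1+e_2)\bigr)$ need not be the Kronecker algebra. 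None of this is controlled by your routing argument, which only prevents $\ell_1,\ell_2$ from \emph{ending} at orbifold points. A secondary omission: your digon construction needs marked points on appropriate sides of $\ell$, and a side of $\ell$ containing only orbifold points has none; excluding this is precisely where the paper's case split on $\# O$ and the winding-number-one observation enter, and your proof performs neither.
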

\begin{proof}
Note that if $\# O=0$, then $A$ is a graded gentle algebra and 
the result follows from Theorem \ref{thm:main1}. 
If $\# O\ge 2$, then the result follows from Proposition \ref{prop:Oge2}.
Therefore, let $\# O=1$ in the following.

Assume $\per(A)$ is silting-discrete and $\ell$ is a simple closed curve on $S$ such that $\omega_\eta(\ell)=0$. Note that $\ell$ cannot only surround the orbifold, because
any simple closed curve only surrounding the orbifold point in $O$ in $S$ always has winding number one (see \cite{BSW24}). 
Therefore, there exists a boundary component and 
a simple arc $1$ connecting the orbifold point to  a marked point on that boundary component which does not intersect with $\ell$. 
See the figure below. 

\begin{center}
\begin{tikzpicture}[scale=1, >=Stealth]

% ==================== 1. 定义样式 ====================
% 颜色定义
\definecolor{arcblue}{RGB}{0, 0, 255}

% [修改] 蓝点样式
\tikzset{blueDot/.style={circle, fill=arcblue, inner sep=1.5pt}}

% 蓝色连接线样式
\tikzset{blueLine/.style={thick, arcblue}}
% 绿色弧线样式 
\tikzset{greenLine/.style={thick, arcblue}}
% 圆圈样式
\tikzset{hole/.style={thick, draw=black, fill=white, pattern=north east lines, pattern color=gray!40}}
% Orbifold 点样式 (十字叉)
\tikzset{orbifold/.style={cross out, draw=black, thick, minimum size=5pt, inner sep=0pt}}

% 角标箭头样式 (黑色)
\tikzset{angleArrow/.style={->, thick, black, shorten >=1pt, shorten <=1pt}}

% ==================== 2. 定义坐标 ====================
% 小圆的中心 (向下移动)
\coordinate (HoleCenter) at (0, -0.4);

% 内部小圆的半径
\def\rsmall{0.4}

% 椭圆的中心
\coordinate (EllipseCenter) at (0,0);
% 椭圆的长半轴和短半轴
\def\Ra{2.3}
\def\Rb{1.4}

% 蓝点位置 (原绿点)
\coordinate (GreenP) at ($(HoleCenter) + (0, \rsmall)$);

% Orbifold 点位置
\coordinate (OrbP) at (1.2, 0.3);

% ==================== 3. 绘制图形 ====================

% --- 3.1 绘制黄色椭圆 (gamma) ---
\draw[thick, orange] (EllipseCenter) ellipse ({\Ra} and {\Rb});
\node[orange, right] at ({\Ra*cos(30)}, {\Rb*sin(30)}) {$\ell$};

% --- 3.2 绘制内部小圆 (Hole) ---
\draw[hole] (HoleCenter) circle (\rsmall);

% --- 3.3 绘制蓝色连接线 (1) ---
\draw[blueLine] (GreenP) to[bend left=20] coordinate[pos=0.2] (BlueArcPoint) node[midway, above, font=\small] {1} (OrbP);

% --- 3.4 绘制左上方的弧线 ---
\draw[greenLine] (GreenP) to[bend right=20] coordinate[pos=1.0] (GreenArcEnd) +(-1.4, 0.3);

% --- 3.5 绘制角标 alpha ---
% 参数调整：!0.6! 增加半径，bend left=60 增加弯曲度，颜色为黑色
\draw[angleArrow] ($(GreenP)!0.4!(GreenArcEnd)$) +(0.25, 0.04) to[bend left=20] node[above, text=black, font=\small] {$\alpha$} ($(GreenP)!1!(BlueArcPoint)$);

% --- 3.6 绘制点 ---
% [修改] 使用 blueDot 样式
\node[blueDot] at (GreenP) {};
\node[orbifold] at (OrbP) {};

\end{tikzpicture}
\end{center}
We extend the arc $1$ to an admissible dissection $\Delta'$ of $(S,O,\eta)$ such that the associated algebra is a  graded skew-gentle algebra, denoted by $A'=\Bbbk Q'/\langle I' \rangle$. In addition,  
the quiver $Q'$ has the following form:
\begin{center}
\begin{tikzpicture}[
    >=stealth,
    node distance=1.2cm,
    font=\small
]

    % 1. 定义节点
    \node[draw=gray, dashed, thick, rounded corners=4pt, inner sep=6pt] (Q) {$Q''$};
    \node (1) [right=of Q] {$1$};

    % 2. 绘制箭头
    %  在 (Q) 到 (1) 的路径中间添加 node[above] {$\alpha$}
    \draw[->, thick, shorten >=3pt] (Q) -- node[above, font=\scriptsize] {$\alpha$} (1);

    % 3. 绘制自环
    \draw[->, thick, shorten >=2pt, shorten <=2pt] (1) to [out=120, in=60, looseness=6] node[above, font=\scriptsize] {$\epsilon_1$} (1);

\end{tikzpicture}
\end{center}
where $\epsilon_1$ is the only special loop  corresponding to the unique orbifold in $O$.

Let $Q''$ be the subquiver of $Q'$ obtained by deleting vertex $1$ and the arrows $\epsilon_1$ and $\alpha$. 
Let $I'' = I' \cap \Bbbk Q''$ be the set of relations in $I'$ that are supported on $Q''$, i.e., the paths in $I'$ that do not pass through vertex $1$. 
Let $A'' = \Bbbk Q'' / \langle I'' \rangle$.
Then $A''$ is a graded gentle algebra whose surface model is obtained from $(S,O,\eta,\Delta')$ by removing the orbifold point in $O$ and the arc $1$ in $\zD'$. Note that the genus of the surface remains unchanged.
Therefore, $\ell$ is also a simple closed curve on the surface model of $A''$ with winding number zero  and $\per(A'')$  is not silting-discrete by Theorem \ref{thm:main1}.

Let $e=1-e_1$. Then $A''=eA'e$, and we may regard $\per(A'')$ as a full subcategory of $\per(A')$ via the canonical embedding $\per(A'')\hookrightarrow \per(A')$ given by $-\otimes^{\bf L}_{A''} eA'$. 
Since $\per(A'')$ is not silting-discrete, 
 $\per(A')$ is not silting-discrete by \cite[Theorem 2.10]{AH24}.
 Since $\per(A)$ and $\per(A')$ are triangle equivalent, 
$\per(A)$ is also not silting-discrete. This is a contradiction.
\end{proof}

\begin{proof}[Proof of Theorem \ref{Thm:siltingdiscreteforskewgentle}]
That (1) implies (2) follows from Propositions \ref{prop:Oge2} and \ref{prop:Sge1}. 
The equivalence of (2) and (3) follows from the same argument as in the proof of Corollary \ref{Cor:siltingdiscretegentle}.
\end{proof}

We end our paper with the following remark.

\begin{remark}
In contrast to the case of graded gentle algebras, the class of graded skew-gentle algebras is not necessarily closed under the operation of taking endomorphism algebras of silting objects.
This structural limitation prevents a similar method used in the proof of Theorem \ref{thm::1.1} to establish the implication $(2) \Rightarrow (1)$ via the equivalence between $\tau$-tilting finiteness and representation finiteness for skew-gentle algebras. Despite this technical obstacle in the converse direction, we conjecture that the three conditions in Theorem \ref{Thm:Amain3} remain equivalent.
\end{remark}


\begin{thebibliography}{AAAA}
\setlength{\baselineskip}{14.2pt}

% --- A Authors ---

\bibitem[Ada16]{Ada-rad-square-0}
T. Adachi,
\emph{Characterizing $\tau$-tilting finite algebras with radical square zero},
{ Proc. Am. Math. Soc.} {\bf 144} (2016), no. 11, 4673--4685.

\bibitem[AIR14]{AIR14}
T. Adachi, O. Iyama and I. Reiten,
\emph{$\tau$-tilting theory},
{ Compos. Math.} {\bf 150} (2014), no. 3, 415--452.

\bibitem[AMY19]{AMY}
T. Adachi, Y. Mizuno and D. Yang,
\emph{Discreteness of silting objects and $t$-structures in triangulated categories},
{ Proc. Lond. Math. Soc. (3)} {\bf 118} (2019), no. 1, 1--42.

\bibitem[Aih13]{Aihara13}
T. Aihara,
\emph{Tilting-connected symmetric algebras},
{ Algebr. Represent. Theory} {\bf 16} (2013), no. 3, 873--894.

\bibitem[AH24]{AH24}
T. Aihara and T. Honma,
\emph{When is the silting-discreteness inherited?}
{ Nagoya Math. J.} {\bf 256} (2024), 905--927.

\bibitem[AHMW21]{AHMW21}
T. Aihara, T. Honma, K. Miyamoto and Q. Wang,
\emph{Report on the finiteness of silting objects},
{ Proc. Edinb. Math. Soc. (2)} {\bf 64} (2021), no. 2, 217--233.

\bibitem[AI12]{AI12}
T. Aihara and O. Iyama,
\emph{Silting mutation in triangulated categories},
{ J. Lond. Math. Soc. (2)} {\bf 85} (2012), no. 3, 633--668.

\bibitem[AM17]{AM17}
T. Aihara and Y. Mizuno,
\emph{Classifying tilting complexes over preprojective algebras of Dynkin type},
{ Algebra Number Theory} {\bf 11} (2017), no. 6, 1287--1315.

\bibitem[AB22]{AB22}
C. Amiot and T. Br\"{u}stle,
\emph{Derived equivalences between skew-gentle algebras using orbifolds},
{ Doc. Math.} {\bf 27} (2022), 933--982.

\bibitem[AP24]{AP24}
C. Amiot and P-G. Plamondon, 
\emph{Skew-group $A_\infty$-categories as Fukaya categories of orbifolds}
(2024) Preprint \href{https://arxiv.org/abs/2405.15466}{\texttt{arXiv:2405.15466}}.

\bibitem[APS19]{APS19}
C. Amiot, P-G. Plamondon and S. Schroll,
\emph{A complete derived invariant for gentle algebras via winding numbers and Arf invariants},
{ Selecta Math. (N.S.)} {\bf 29} (2023), no. 2, 30.

\bibitem[AH81]{AH81}
I. Assem and D. Happel,
\emph{Generalized tilted algebras of type {$A_{n}$}},
{ Comm. Algebra} {\bf 9} (1981), no. 20, 2101--2125.

\bibitem[AS87]{AS-tilting-cotilting-equ}
I. Assem and A. Skowro\'nski,
\emph{Iterated tilted algebras of type $\widetilde{\mathbb{A}}_m$},
{ Math. Z.} {\bf 195} (1987), 269--290.

\bibitem[APR79]{APR79}
M. Auslander, M. I. Platzeck and I. Reiten,
\emph{Coxeter functions without diagrams},
{ Trans. Amer. Math. Soc.} {\bf 250} (1979), 1--12.

% --- B Authors ---
\bibitem[Bon91]{B91}
V.M. Bondarenko,
\emph{Representations of bundles of semichained sets and their applications},(Rus-
sian. Russian summary) Algebra i Analiz 3 (1991), no. 5, 38–61; translation in St. Petersburg
Math. J. 3 (1992), no. 5, 973–996.


\bibitem[BSW24]{BSW24}
S. Barmeier, S. Schroll and Z. Wang,
\emph{Partially wrapped Fukaya categories of orbifold surfaces},
(2024) Preprint \href{https://arxiv.org/abs/2407.16358}{\texttt{arXiv:2407.16358}}.

\bibitem[BC21]{BC21}
K. Baur and R. Coelho Sim\~oes,
\emph{A geometric model for the module category of a gentle algebra},
{ Int. Math. Res. Not. IMRN} (2021), no. 15, 11357--11392.

\bibitem[BGP73]{BGP73}
I. N. Bernstein, I. M. Gelfand and V. A. Ponomarev,
\emph{Coxeter functors and Gabriel's theorem},
{ Russian Math. Surveys} {\bf 28} (1973), 17--32.

\bibitem[BGS04]{BGS04}
G. Bobiński, C. Geiß and A. Skowroński,
\emph{Classification of discrete derived categories},
{ Cent. Eur. J. Math.} {\bf 2} (1) (2004) 19--49.

\bibitem[Boc16]{B16}
R. Bocklandt,
\emph{Noncommutative mirror symmetry for punctured surfaces. With an appendix by Mohammed Abouzaid},
{ Trans. Amer. Math. Soc.} {\bf 368} (2016), no. 1, 429--469.

\bibitem[BB80]{BB80}
S. Brenner and M. C. R. Butler,
\emph{Generalization of the Bernstein-Gelfand-Ponomarev reflection functors},
{ Lecture Notes in Mathematics, vol. 839}, Springer, New York, 1980, 103--169.

\bibitem[BPP16]{BPP16}
N. Broomhead, D. Pauksztello and D. Ploog,
\emph{Discrete derived categories II: the silting pairs CW complex and the stability manifold},
{J. Lond. Math. Soc.} (2) 93 (2016), no. 2, 273–300.

\bibitem[BPP18]{BPP}
N. Broomhead, D. Pauksztello and D. Ploog,
\emph{Discrete triangulated categories},
{ Bull. Lond. Math. Soc.} {\bf 50} (2018), no. 1, 174--188.

\bibitem[BR87]{BR87}
M. C. R. Butler and C. M. Ringel,
\newblock \emph{Auslander-Reiten sequences with few middle terms and applications to string algebras.}
\newblock {Comm. Algebra} 15 (1987), no. 1-2, 145-179.

% --- C Authors ---

\bibitem[Cha25]{C25}
W. Chang,
\emph{Geometric models for the hearts in the derived category of a gentle algebra}, { Israel J. Math.} (2025) https://doi.org/10.1007/s11856-025-2801-7

\bibitem[CJS22]{CJS22}
W. Chang, H. Jin and S. Schroll,
\emph{Recollements of partially wrapped Fukaya categories and surface cuts},
(2022) Preprint \href{https://arxiv.org/abs/2206.11196}{\texttt{arXiv:2206.11196}}.

\bibitem[Chi72]{C72}
D. R. J. Chillingworth,
\emph{Winding numbers on surfaces. I},
{ Math. Ann.} {\bf 196} (1972), 218--249.

\bibitem[CK24]{CK24}
C.-H. Cho and K. Kyoungmo 
\emph{Topological Fukaya category of tagged arcs}
(2024) Preprint \href{https://arxiv.org/abs/2404.10294}{\texttt{arXiv:2404.10294}}.




\bibitem[CB88]{CB-skew-gentle}
W. W. Crawley-Boevey,
\emph{Functorial filtrations and the problem of an idempotent and a square-zero matrix},
{ J. London Math. Soc. (2)} {\bf 38} (1988), no. 3, 385--402.

\bibitem[CB89]{CB-skew-gentle-2}
W. W. Crawley-Boevey,
\emph{Functorial filtrations II: clans and the Gelfand problem},
{ J. London Math. Soc.} {\bf 40} (1989), 9--30.

% --- D Authors ---

\bibitem[DIJ19]{DIJ19}
L. Demonet, O. Iyama and G. Jasso,
\emph{$\tau$-tilting finite algebras, bricks, and $g$-vectors},
{ Int. Math. Res. Not. IMRN} (2019), no. 3, 852--892.

\bibitem[DIRRT23]{DIRRT}
L. Demonet, O. Iyama, N. Reading, I. Reiten and H. Thomas,
\emph{Lattice theory of torsion classes: beyond $\tau$-tilting theory},
{ Trans. Amer. Math. Soc. Ser. B} {\bf 10} (2023), 542--612.

\bibitem[Den00]{Deng00}
B. Deng,
\emph{On a problem of Nazarova and Roiter},
{ Comment. Math. Helv.} {\bf 75} (2000), no. 3, 368--409.

\bibitem[DR73]{DR73}
V. Dlab and C. M. Ringel,
\emph{On algebras of finite representation type},
Carleton Mathematical Lecture Notes, No. 2 Carleton University, Department of Mathematics, Ottawa, ON, 1973. iii+160 pp.

% --- G Authors ---

\bibitem[Gab72]{G72}
P. Gabriel,
\emph{Unzerlegbare Darstellungen},
{ Manuscripta Math.} {\bf 6} (1972), 71--103; correction, ibid. {\bf 6} (1972), 309.

\bibitem[Gei99]{Geiss-clans}
Ch. Geiss,
\emph{Maps between representations of clans},
{ J. Algebra} {\bf 218} (1999), 131--164.

\bibitem[GP99]{GeissdelaPena}
Ch. Geiss and J. de la Pe\~na,
\emph{Auslander-Reiten components for clans},
{ Bol. Soc. Mat. Mexicana (3)} {\bf 5} (1999), no. 2, 307--326.

% --- H Authors ---

\bibitem[HKK17]{HKK17}
F. Haiden, L. Katzarkov and M. Kontsevich,
\emph{Flat surfaces and stability structures},
{ Publ. Math. Inst. Hautes \'Etudes Sci.} {\bf 126} (2017), 247--318.

\bibitem[Han21]{Hansper-thesis}
U. Hansper,
\emph{Classification of the indecomposable finite dimensional modules of clannish algebras},
Ph.D. Thesis, 2021.

\bibitem[HR82]{HR82}
D. Happel and C. M. Ringel,
\emph{Tilted algebras},
{ Trans. Amer. Math. Soc.} {\bf 274} (1982), 399--443.

\bibitem[HZZ23]{HZZ23}
P. He, Y. Zhou and B. Zhu,
\emph{A geometric model for the module category of a skew-gentle algebra},
{ Math. Z.} {\bf 304} (2023), no. 1, Paper No. 18, 41 pp.

% --- I Authors ---

\bibitem[IY18]{IY}
O. Iyama and D. Yang, 
\emph{Silting reduction and Calabi--Yau reduction of triangulated categories}, 
{ Trans. Amer. Math. Soc.} {\bf 370} (2018), no. 11, 7861--7898.

% --- J Authors ---

\bibitem[JSW23]{JSW23}
H. Jin, S. Schroll and Z. Wang,
\emph{A complete derived invariant and silting theory for graded gentle algebras},
(2023) Preprint \href{https://arxiv.org/abs/2303.17474}{\texttt{arXiv:2303.17474}}.

% --- K Authors ---

\bibitem[Kel94]{Keller94}
B. Keller, 
\emph{Deriving DG categories}, 
{ Ann. Sci. \'Ecole Norm. Sup. (4)} {\bf 27} (1994), no. 1, 63--102.

\bibitem[Kel98]{Ke98}
B. Keller, 
\emph{On the construction of triangle equivalence}, 
Derived equivalences for group rings, 155--176, Lecture Notes in Math., 1685, Springer, Berlin, 1998.

\bibitem[KV88]{KV88}
B. Keller and D. Vossieck,
\emph{Aisles in derived categories}, 
{ Bull. Soc. Math. Belg. S\'er. A} {\bf 40} (1988), no. 2, 239--253. 

\bibitem[KY14]{KoY}
S. Koenig and D. Yang, 
\emph{Silting objects, simple-minded collections, $t$-structures and co-$t$-structures for finite-dimensional algebras}, 
{ Doc. Math.}, {\bf 19}:403--438, 2014.

% --- L Authors ---

\bibitem[LSV22]{LSV22}
D. Labardini-Fragoso, S. Schroll and Y. Valdivieso,
\emph{Derived category of skew-gentle algebras and orbifolds},
{ Glasg. Math. J.} {\bf 63} (2022), 1--26.

\bibitem[LP20]{LP20}
Y. Lekili and A. Polishchuk,
\emph{Derived equivalences of gentle algebras via Fukaya categories},
{ Math. Ann.} {\bf 376} (2020), no. 1--2, 187--225.

\bibitem[LZ23]{LZ23}
Y. Liu and Y. Zhou,
\emph{A negative answer to complement question for presilting complexes},
{ J. Algebra} {\bf 667} (2025), 282--304.

% --- O Authors ---

\bibitem[Opp19]{O19}
S. Opper,
\emph{On auto-equivalences and complete derived invariants of gentle algebras},
(2019) Preprint \href{https://arxiv.org/abs/1904.04859}{\texttt{arXiv:1904.04859}}.

\bibitem[OPS18]{OPS18}
S. Opper, P-G. Plamondon and S. Schroll,
\emph{A geometric model for the derived category of gentle algebras},
(2018) Preprint \href{https://arxiv.org/abs/1801.09659}{\texttt{arXiv:1801.09659}}.

% --- P Authors ---

\bibitem[PPP19]{PPP18}
Y. Palu, V. Pilaud and P-G. Plamondon,
\emph{Non-kissing and non-crossing complexes for locally gentle algebras},
{ J. Comb. Algebra} {\bf 3} (2019), no. 4, 401--438.

\bibitem[Pla19]{P19}
P. G. Plamondon,
\emph{$\tau$-tilting finite gentle algebras are representation-finite},
{ Pacific J. Math.} {\bf 302} (2019), no. 2, 709--716.

% --- Q Authors ---

\bibitem[QZZ22]{QZZ22}
Y. Qiu, C. Zhang and Y. Zhou,
\emph{Two geometric models for graded skew-gentle algebras},
(2022) Preprint \href{https://arxiv.org/abs/2212.10369}{\texttt{arXiv:2212.10369}}.

\bibitem[QW18]{QW}
Y. Qiu and J. Woolf, 
\emph{Contractible stability spaces and faithful braid group actions}, 
{ Geom. Topol.} {\bf 22}(6):3701--3760, 2018.

% --- R Authors ---

\bibitem[Ric89]{Rickard89}
J. Rickard,
\emph{Morita theory for derived categories},
{ J. Lond. Math. Soc}, {\bf 39 (2)} (1989), 436--456.

% --- V Authors ---

\bibitem[Vos01]{V01}
D. Vossieck,
\emph{The algebras with discrete derived category},
{ J. Algebra} {\bf 243} (1) (2001) 168–176.



\bibitem[WW85]{WW85}
B. Wald and J. Waschbsch,
\newblock \emph{Tame biserial algebras.}
\newblock {J. Algebra} 95 (1985), no. 2, 480-500.

% --- Y Authors ---

\bibitem[YY21]{YY21}
L. Yao and D. Yang,
\emph{The equivalence of two notions of discreteness of triangulated categories},
{ Algebr. Represent. Theory} {\bf 24} (2021), no. 5, 1295--1312.

\end{thebibliography}
\end{document}